%
\documentclass[12pt]{amsart}
\usepackage{amsmath,amssymb,amsthm,graphics,amscd,mathrsfs}
\usepackage{amscd, amsfonts, amssymb, amsthm}
\usepackage{float, fullpage, verbatim, bbm}
\usepackage[colorlinks, breaklinks, linkcolor=blue]{hyperref}
\usepackage{breakurl}
\usepackage{array}
\usepackage{latexsym}
\usepackage{enumerate}

\usepackage{xcolor}
\usepackage{subcaption}
\usepackage{tikz-cd}
\usepackage{tikz}
\usetikzlibrary{arrows}
\usetikzlibrary{shapes}
\usepackage{tkz-graph}
\usetikzlibrary{decorations}
\usetikzlibrary{arrows,decorations.markings}
\usetikzlibrary{calc}
\usepackage{multirow}
\tikzstyle{v} = [circle, draw, inner sep=2pt, minimum size=3pt, fill=black]
\tikzstyle{l} = [rectangle, draw, rounded corners]

\usepackage{etoolbox}
\usepackage{caption}
\usepackage[parfill]{parskip}


\newcommand\CA{{\mathscr A}}
\newcommand\CB{{\mathscr B}}
\newcommand\CC{{\mathscr C}}

\newcommand\CS{{\mathcal S}}
\newcommand\CIF{{\mathcal {IF}}}

\newcommand\CIFAC{{\mathcal {IF\!AC}}}

\newcommand\CI{{\mathcal I}}

\newcommand\CM{{\mathcal M}}

\newcommand\R{{\varrho}}
\newcommand\RR{{\mathscr R}}

\newcommand\BBC{{\mathbb C}}
\newcommand\BBK{{\mathbb K}}
\newcommand\BBN{{\mathbb N}}
\newcommand\BBP{{\mathbb P}}
\newcommand\BBQ{{\mathbb Q}}
\newcommand\BBR{{\mathbb R}}
\newcommand\BBZ{{\mathbb Z}}


\newcommand\Aut{{\operatorname{Aut}}}
\newcommand\codim{\operatorname{codim}}

\newcommand\GL{\operatorname{GL}}

\newcommand\Poin{{\operatorname{Poin}}}

\newcommand\rank{\operatorname{rank}}
\newcommand\rk{\operatorname{rk}}

\newcommand\hgt{\operatorname{ht}}
\newcommand\Gen{\operatorname{Gen}}
\newcommand\lc{{\operatorname{lc}}}




\numberwithin{equation}{section}

\theoremstyle{plain}
\newtheorem{lemma}[equation]{Lemma}
\newtheorem{theorem}[equation]{Theorem}

\newtheorem{conjecture}[equation]{Conjecture}

\newtheorem{corollary}[equation]{Corollary}
\newtheorem{prop}[equation]{Proposition}
\theoremstyle{definition}
\newtheorem{defn}[equation]{Definition}
\newtheorem{remark}[equation]{Remark}

\newtheorem{example}[equation]{Example}


\subjclass[2010]{Primary  52C35, 14N20, 20F55; Secondary 32S22}

\begin{document}

\title[On connected subgraph arrangements]
{On connected subgraph arrangements}

\author[L. Giordani]{Lorenzo Giordani}
\address
{Fakult\"at f\"ur Mathematik,
	Ruhr-Universit\"at Bochum,
	D-44780 Bochum, Germany}
\email{lorenzo.giordani@rub.de}

\author[T. M\"oller]{Tilman M\"oller}
\address
{Fakult\"at f\"ur Mathematik,
	Ruhr-Universit\"at Bochum,
	D-44780 Bochum, Germany}
\email{tilman.moeller@rub.de}

\author[P.~M\"ucksch]{Paul M\"ucksch}
\address
{Institut für Algebra, Zahlentheorie und Diskrete Mathematik, 
	Fakultät für Mathematik und Physik, 
	Leibniz Universität Hannover, 
	Welfengarten 1, D-30167 Hannover, Germany}
\email{muecksch@math.uni-hannover.de}

\author[G. R\"ohrle]{Gerhard R\"ohrle}
\address
{Fakult\"at f\"ur Mathematik,
Ruhr-Universit\"at Bochum,
D-44780 Bochum, Germany}
\email{gerhard.roehrle@rub.de}

\keywords{free arrangements, factored arrangements,  
formal arrangements,
$K(\pi,1)$ arrangements}

\allowdisplaybreaks

\begin{abstract}
In \cite{cuntzkuehne:subgraphs},
Cuntz and K\"uhne
introduced a particular class of hyperplane arrangements stemming from a given graph, so called
\emph{connected subgraph arrangements}.
In this note we strengthen some of the result from  \cite{cuntzkuehne:subgraphs}
and prove new ones for members of this class.
For instance, we show that aspherical members within this class stem from a rather restricted 
set of graphs. Specifically, if $\CA_G$ is an aspherical connected subgraph arrangement, then $\CA_G$ is free with the unique possible exception when the underlying graph $G$ is the complete graph on $4$ nodes.
\end{abstract}

\maketitle

\setcounter{tocdepth}{1}

\tableofcontents

\section{Introduction and Main Results}

In \cite{cuntzkuehne:subgraphs},
Cuntz and K\"uhne
introduced a special class of hyperplane arrangements stemming from a given graph, so called
\emph{connected subgraph arrangements}.
They include such prominent classes as the braid arrangements and resonance arrangements among many others.
In this note 
we strengthen some of the results proved in \cite{cuntzkuehne:subgraphs}
and prove some new theorems for members of this class.

We first recall the definition from \cite{cuntzkuehne:subgraphs}.
Fix $n \in \BBN$. Let $\BBK$ be a field and let $V = \BBK^n$. Let $x_1, \ldots, x_n$ be the dual basis  in $V^*$ of the standard $\BBK$-basis of $V$.
Let $G = (N,E)$ be an undirected graph with vertex set $N = [n]$ and edge set $E$. For $I \subseteq N$, let $G[I]$ be the induced subgraph of $G$ on the set of vertices $I$.
For $I \subseteq N$, define the hyperplane
\[H_I := \ker \sum_{i \in I}x_i.\]

\begin{defn}
	\label{def:AG} With the notation as above, the \emph{connected subgraph arrangement} $\CA_G(\BBK)$ in $V$ is defined as
	\[\CA_G(\BBK) := \{H_I \mid \varnothing \ne I \subseteq N \text{ if $G[I]$ is connected} \}.
	\]
	In case $\BBK = \BBQ$, we write $\CA_G := \CA_G(\BBQ)$.
\end{defn}

In \cite{cuntzkuehne:subgraphs},
Cuntz and K\"uhne classified all connected subgraph arrangements over $\BBQ$ which are free, factored, simplicial or supersolvable.
The purpose of this note is firstly to strengthen several of these results and secondly to prove new ones.

All properties of arrangements we study in this note are compatible with the product construction for arrangements. For a simple graph $G$, the connected subgraph arrangement $\CA_G$ is the product of the connected subgraph arrangements stemming from the connected components of $G$. Thus there is no harm in assuming that $G$ is connected throughout.

A property for arrangements is said to be \emph{combinatorial} if it only depends on the intersection lattice of the underlying arrangement.
Our first main theorem shows that the class of connected subgraph arrangements is
very special in the sense that essentially every property
we may formulate for members of this class is
combinatorial. This is formally captured by the notion of projective uniqueness, see Definition \ref{def:projunique}.

\begin{theorem}
	\label{thm:AGiscomb}
	Let $G$ be a connected graph. Then $\CA_G$ is projectively unique. 
\end{theorem}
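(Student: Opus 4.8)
The plan is to verify the definition directly: writing $L(\CA)$ for the intersection lattice of an arrangement $\CA$, we show that every arrangement $\CB$ with $L(\CB)\cong L(\CA_G)$ is linearly isomorphic to $\CA_G$; by Definition~\ref{def:projunique} this is precisely projective uniqueness. Fix an isomorphism $L(\CB)\cong L(\CA_G)$, let $K_I$ denote the hyperplane of $\CB$ corresponding to $H_I$ for each nonempty connected $I\subseteq N$, and choose linear forms with $K_I=\ker\ell_I$ and $\ell_I=\sum_{j\in N}c_{I,j}\,x_j\in V^*$. Since each singleton induces a connected subgraph, $H_{\{i\}}=\ker x_i$ belongs to $\CA_G$, and $x_1,\dots,x_n$ form a basis of $V^*$; in particular $\bigcap_{i\in N}H_{\{i\}}$ has codimension $n$. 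This is a statement about $L(\CA_G)$, so the normals of the hyperplanes $K_{\{i\}}$ likewise form a basis of $V^*$, and after applying a suitable element of $\GL(V)$ we may assume $K_{\{i\}}=\ker x_i$ for all $i\in N$.

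We would next determine the supports of the forms $\ell_I$. For a connected set $I$ and an element $j\in I$: the flat $\bigcap_{i\in I}H_{\{i\}}$ has codimension $|I|$ and is contained in $H_I$ (if $x_i=0$ for all $i\in I$, then $\sum_{i\in I}x_i=0$), whereas $\bigcap_{i\in I\setminus\{j\}}H_{\{i\}}$ has codimension $|I|-1$ and is \emph{not} contained in $H_I$ (on that flat $\sum_{i\in I}x_i$ restricts to $x_j\not\equiv 0$). Both are properties of $L(\CA_G)$, hence hold in $\CB$; combined with $K_{\{i\}}=\ker x_i$ they give $\ell_I\in\langle x_i : i\in I\rangle$ and $c_{I,j}\neq 0$ precisely for $j\in I$. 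In particular $\ell_N=\sum_{i\in N}\nu_i x_i$ with every $\nu_i\neq 0$.

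The heart of the argument is to propagate this information. Because $G$ is connected, for every connected $I\subseteq N$ one can enumerate $N\setminus I=\{v_0,\dots,v_{m-1}\}$ so that, with $J_0:=I$ and $J_{\ell+1}:=J_\ell\cup\{v_\ell\}$, each $G[J_\ell]$ is connected; here $J_m=N$. For every $\ell$ the identity $\sum_{i\in J_{\ell+1}}x_i=\sum_{i\in J_\ell}x_i+x_{v_\ell}$ shows that the codimension-$2$ flat $H_{J_\ell}\cap H_{\{v_\ell\}}$ is contained in $H_{J_{\ell+1}}$ in $\CA_G$; this being combinatorial, the same holds in $\CB$, so $\ell_{J_{\ell+1}}\in\langle\ell_{J_\ell},\,x_{v_\ell}\rangle$. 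As $H_{J_{\ell+1}}\neq H_{\{v_\ell\}}$, the coefficient of $\ell_{J_\ell}$ in that expression is nonzero; and since $\ell_{J_\ell}$ is supported on $J_\ell$, it follows that $\ell_{J_\ell}$ is a nonzero scalar multiple of the form obtained from $\ell_{J_{\ell+1}}$ by deleting its $x_{v_\ell}$-term. Running this downwards from $J_m=N$ to $J_0=I$ shows that $\ell_I$ is a nonzero scalar multiple of $\sum_{i\in I}\nu_i x_i$, for every connected $I$. Thus, after the normalization, $\CB$ is exactly the arrangement obtained from $\CA_G$ by the invertible diagonal change of coordinates $x_i\mapsto\nu_i x_i$, so $\CB\cong\CA_G$ and the theorem follows.

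The conceptual point is that such connectivity chains promote the normalization at the coordinate hyperplanes to a simultaneous normalization of all the $\ell_I$, after which the residual diagonal torus of $\GL(V)$ leaves no further freedom. We therefore expect the main, but essentially clerical, obstacle to be checking that every geometric fact invoked — the codimension of $\bigcap_{i\in N}H_{\{i\}}$, the two containment/non-containment statements for $H_I$, and the codimension-$2$ containment $H_{J_\ell}\cap H_{\{v_\ell\}}\subseteq H_{J_{\ell+1}}$ — is genuinely recorded by the intersection lattice and so transfers to $\CB$, together with the routine verification that a chain of connected sets linking any connected $I$ to $N$ exists.
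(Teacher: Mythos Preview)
Your proof is correct, and it takes a genuinely different route from the paper's.

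The paper sets up an abstract ``generation'' framework (Definition~\ref{def:gen}): a subset $S\subseteq\CA$ generates $\CA$ if every hyperplane of $\CA$ can be obtained from $S$ by iteratively taking sums of lattice elements. Proposition~\ref{prop:gen} then shows that any essential irreducible arrangement generated by a set of size $\ell+1$ is projectively unique. For $\CA_G$ the paper takes $S=\{\ker x_1,\ldots,\ker x_n,\ H_N\}$ and observes that for every connected $I$ one has $H_I=X+Y$ with $X=\bigcap_{i\in I}\ker x_i$ and $Y=H_N\cap\bigcap_{j\notin I}\ker x_j$; thus $\CA_G$ is generated by $S$ in a single step, and the theorem follows. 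By contrast, you normalise only the $n$ coordinate hyperplanes, determine the support of each $\ell_I$ from containments in coordinate flats, and then propagate coefficients along chains $I=J_0\subset J_1\subset\cdots\subset J_m=N$ of connected subsets using only rank-$2$ dependencies $H_{J_\ell}\cap H_{\{v_\ell\}}\subseteq H_{J_{\ell+1}}$; the residual freedom is then exactly a diagonal torus. The paper's approach yields a reusable criterion (Proposition~\ref{prop:gen}) and is shorter once the framework is in place, exploiting sums of flats of arbitrary codimension. Your argument is more elementary in that it uses only codimension-$2$ information; in spirit this is close to the line-closure viewpoint used later for Theorem~\ref{thm:AGformal}, and it makes the role of connectivity of $G$ (via the existence of the chain $J_0\subset\cdots\subset J_m$) very transparent. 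Both approaches rely essentially on the hyperplane $H_N$, which is where connectedness of $G$ enters.
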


Theorem \ref{thm:AGiscomb} implies that 
for the class of all rational arrangements whose underlying matroid admits a realization over $\BBQ$ as a connected subgraph arrangement 
freeness is combinatorial. In particular, Terao's conjecture over $\BBQ$ is valid within this class, cf.~\cite[Prop.~2.3]{ziegler:matroid}. Likewise, asphericity is combinatorial within this class. Whether both these properties are combinatorial in general are longstanding and wide open problems, see  \cite[Conj.~4.138]{orlikterao:arrangements} and \cite[Prob.~3.8]{falkrandell:homotopyII}.

Theorem \ref{thm:AGiscomb} is proved  in Section \ref{s:thm:AGiscomb}.  The results there show that the theorem holds for any fixed field $\BBK$.

In \cite[Thm.~1.6]{cuntzkuehne:subgraphs} Cuntz and K\"uhne classified all free members among the $\CA_G$:

\begin{theorem}
	\label{thm:freeAG}
	The connected subgraph arrangement $\CA_G$ is free if and only if $G$ is a path graph, a cycle graph, an almost-path graph, or a path-with-triangle graph (Definition \ref{definition: cuntz kühne graph families}).
\end{theorem}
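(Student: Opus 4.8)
The plan is to prove the two implications separately and to base everything on a \emph{heredity principle} for induced subgraphs. Namely, for $\varnothing\ne I\subseteq N$ the connected induced subgraphs of $G[I]$ are exactly the connected induced subgraphs of $G$ contained in $I$; hence the subarrangement $\{H_J\in\CA_G\mid J\subseteq I\}$ is the localization of $\CA_G$ at the coordinate flat $X_I:=\bigcap_{k\in I}\ker x_k$, and its essentialization is $\CA_{G[I]}$. Since localizations of free arrangements are free, and since (by the product decomposition recalled in the introduction) $\CA_{G[I]}$ is free whenever each of its connected components lies in one of the four families, freeness of $\CA_G$ would force $\CA_{G[I]}$ to be free for every $I$. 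This is the tool driving the ``only if'' direction.

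For the ``if'' direction I would treat the four families in turn. For a path-graph $P_n$ the connected subgraphs are the intervals $[i,j]$, so the substitution $y_k:=x_1+\cdots+x_k$ identifies $\CA_{P_n}$ linearly with the essential braid arrangement, which is supersolvable and hence free. For the cycle-, almost-path- and path-with-triangle families I would induct on the number of vertices using Terao's Addition--Deletion Theorem: delete a carefully chosen hyperplane $H_I$, check that the deletion $\CA_G\setminus\{H_I\}$ and the restriction $\CA_G^{H_I}$ are again products of arrangements drawn from the four families with strictly fewer hyperplanes, and verify that their exponents interlace correctly; alternatively one writes down explicit bases of the derivation modules $D(\CA_G)$. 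Some care is required, since for instance the restriction of a cycle-graph arrangement is not a cycle-graph arrangement but a product of path- and almost-path-type pieces.

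For the ``only if'' direction I would induct on $|N|$. If $G$ is connected but outside the four families, then either it has a proper connected induced subgraph again outside the four families --- in which case $\CA_G$ is not free by the inductive hypothesis and the heredity principle --- or $G$ lies on a short explicit list of \emph{minimal} bad graphs (for example $K_4$, whose proper connected induced subgraphs are only paths and a triangle), a list one extracts by comparing the small connected graphs against Definition \ref{definition: cuntz kühne graph families}. For each $H$ on this list I would verify non-freeness of $\CA_H$ directly; the quickest obstruction is Terao's Factorization Theorem, for if $\CA_H$ were free its characteristic polynomial $\chi(\CA_H,t)$ would split over $\BBZ$ into linear factors whose roots are the exponents, and a finite computation from the intersection lattice shows that this fails for each $H$.

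The hard part, I expect, is twofold. First, the inductive Addition--Deletion arguments for the cycle, almost-path and path-with-triangle families: at each step the restricted arrangement must be correctly recognized as a product inside the four families, which is not automatic. Second, the combinatorial bookkeeping that pins down the complete finite list of minimal bad graphs --- equivalently, the statement that every sufficiently large connected graph outside the four families already contains a smaller connected induced subgraph (or a disjoint union of such) lying outside them, so that the induction in the ``only if'' direction terminates at small graphs.
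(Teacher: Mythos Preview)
This theorem is not proved in the present paper at all: it is quoted from \cite[Thm.~1.6]{cuntzkuehne:subgraphs}, and the paper uses it as input for its own results (Theorems~\ref{thm:accurateAG}, \ref{thm:factoredAG2}, \ref{thm:kpi1}). So there is no ``paper's own proof'' to compare against; what I can do is compare your outline to the traces of Cuntz--K\"uhne's argument that the paper records (Lemma~\ref{lem:local}, Figure~\ref{fig:G1_8}, and the references to \cite[Prop.~3.1, Thm.~4.1, Thm.~5.4, Cor.~8.11]{cuntzkuehne:subgraphs}).

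Your ``if'' direction is essentially the Cuntz--K\"uhne strategy: braid/supersolvable for $P_n$, and inductive Addition--Deletion (in fact MAT for $A_{n,k}$) for the other three families. That is fine as a plan.

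Your ``only if'' direction has a genuine gap. You base the reduction solely on \emph{induced subgraphs}, i.e.\ on the localizations $\CA_G\to\CA_{G[I]}$. But induced-subgraph reduction alone does \emph{not} produce a finite list of minimal bad graphs. For example, for every $n\ge 4$ take $C_n$ with one pendant vertex attached: this graph is not in any of the four families, yet \emph{every} proper connected induced subgraph of it is (removing the pendant gives $C_n$; removing a cycle vertex gives either a path or an almost-path). So your ``short explicit list'' would have to contain this entire infinite family, and similar phenomena occur for long paths with two well-separated triangles attached, etc.

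What Cuntz--K\"uhne actually use --- and what the present paper records as Lemma~\ref{lem:local}(ii) --- is that \emph{edge contraction} $G\mapsto G/e$ also yields a localization of $\CA_G$, hence preserves freeness. With both operations (induced subgraphs \emph{and} contractions) the minimal obstructions do form the finite list $G_1,\ldots,G_8$ of Figure~\ref{fig:G1_8}, and non-freeness for those eight is then checked directly (your suggested test via the characteristic polynomial works for several of them). So the fix is not hard, but as written your heredity principle is too weak to make the induction terminate.
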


We strengthen Theorem \ref{thm:freeAG} as follows, where we look at the stronger freeness property of \emph{accuracy}, see Definition \ref{def:accurate}.

\begin{theorem}
	\label{thm:accurateAG}
	The arrangement $\CA_G$ is free if and only if it is accurate.
\end{theorem}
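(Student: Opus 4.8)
The plan is to exploit the fact that accuracy is a refinement of freeness: every accurate arrangement is free, so one direction is immediate and the content is entirely in showing that each of the four families from Theorem \ref{thm:freeAG} — path-graphs, cycle-graphs, almost-path-graphs, and path-with-triangle-graphs — yields an \emph{accurate} arrangement. Recall (Definition \ref{def:accurate}) that $\CA_G$ is accurate if for every rank $k$ between $0$ and $\rk \CA_G$ there is a free subarrangement $\CB \subseteq \CA_G$ with $\rk \CB = k$ and exponents forming a subset of the exponents of $\CA_G$ (equivalently, one can peel off hyperplanes one at a time through a chain of free arrangements whose exponents are nested). So the strategy is: for each of the four families, produce an explicit chain of subarrangements $\varnothing = \CB_0 \subset \CB_1 \subset \cdots \subset \CB_n = \CA_G$ with $\CB_{j}$ free, $\rk \CB_j$ increasing appropriately, and $\exp \CB_j \subseteq \exp \CB_{j+1}$.

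The key steps, in order. First, I would recall from \cite{cuntzkuehne:subgraphs} the explicit exponents of $\CA_G$ in each of the four cases, since accuracy is a statement about which sub-multisets of these exponents are realized by free subarrangements. Second, I would handle the path-graph case, where $\CA_{P_n}$ is (up to the product/coordinate change) the braid arrangement $\mathbf{A}_{n-1}$ — or more precisely a well-understood arrangement with exponents $\{0,1,2,\ldots\}$-type — and here accuracy should follow from the standard inductive deletion that peels braid arrangements down through lower-rank braid arrangements; this is essentially known and I would cite the relevant statement. Third, for cycle-graphs, almost-path-graphs, and path-with-triangle-graphs, I would use the inductively free (or even supersolvable, where applicable) structure established in \cite{cuntzkuehne:subgraphs}: an inductively free arrangement built via a chain of additions in which the exponents behave predictably is a natural source of accuracy, and I would verify that the specific chains used to prove freeness in loc.\ cit.\ can be arranged (or reordered) so that the exponent-nesting condition of Definition \ref{def:accurate} holds at every stage. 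The main tool throughout is Terao's Addition-Deletion Theorem, used to control how exponents change under removing one hyperplane.

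The main obstacle I expect is the cycle-graph family. The path and almost-path and path-with-triangle families are close to supersolvable or have a transparent fibration-type structure that makes the exponent bookkeeping routine, but cycle-graph arrangements are genuinely non-supersolvable in general, and their freeness in \cite{cuntzkuehne:subgraphs} is proved by a more delicate induction. The difficulty is not establishing freeness of \emph{some} chain of subarrangements but ensuring the chain can be chosen so that at \emph{every} intermediate rank the exponent multiset is a sub-multiset of $\exp \CA_G$ — a priori a free subarrangement of the right rank need not have nested exponents. I would address this by a careful case analysis of small cycles as base cases, then an inductive step deleting a well-chosen hyperplane $H_I$ (likely one corresponding to a single vertex or a maximal connected proper subgraph) so that the restriction $\CA_G^{H_I}$ and the deletion $\CA_G \setminus \{H_I\}$ both lie in the allowed families and the Addition-Deletion exponent formula keeps everything nested. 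If a uniform argument proves elusive, a fallback is to invoke any classification of exponents already recorded in \cite{cuntzkuehne:subgraphs} together with a direct combinatorial check that the required sub-multisets occur as exponents of explicit subgraph arrangements in the same four families.
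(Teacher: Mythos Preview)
Your proposal rests on a misreading of Definition~\ref{def:accurate}. Accuracy is \emph{not} a statement about free subarrangements $\CB\subseteq\CA_G$; it is a statement about \emph{restrictions} $\CA_G^{X_d}$ to flats $X_d\in L(\CA_G)$ of each dimension $d$. The required witnesses are arrangements in lower-dimensional ambient spaces obtained by intersecting hyperplanes with $X_d$, not chains of subarrangements of $\CA_G$ obtained by deleting hyperplanes. Your entire strategy --- peeling off hyperplanes via Addition--Deletion to produce subarrangements with nested exponents --- is therefore aimed at the wrong object, and none of the work you outline would establish the theorem even if it could be carried out.

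For comparison, the paper's argument proceeds as follows. Three of the four families are handled by citation: $\CA_{P_n}$ is the braid arrangement, known to be accurate by \cite[Thm.~1.6]{mueckschroehrle:accurate}; $\CA_{C_n}$ is (linearly isomorphic to) the cone over a Shi arrangement, accurate by \cite[Thm.~1.8]{mueckschroehrle:accurate}; and $\CA_{A_{n,k}}$ is MAT-free by \cite[Thm.~4.1]{cuntzkuehne:subgraphs}, hence accurate by \cite[Thm.~1.2]{mueckschroehrle:accurate}. So your prediction that the cycle case is the obstacle is also off --- it is dispatched immediately. The genuine work is the path-with-triangle family $\Delta_{n,k}$, and the key idea there is structural: one realizes $\CA_{\Delta_{n,k}}$ as the restriction $\CA_{A_{n+1,k+1}}^H$ of an MAT-free almost-path arrangement to a suitable hyperplane $H$, and then shows that the flats witnessing accuracy of $\CA_{A_{n+1,k+1}}$ (coming from its MAT-partition via \cite[Thm.~3.9]{mueckschroehrle:accurate}) can be chosen to contain such an $H$, so that the iterated restrictions descend to $\CA_{\Delta_{n,k}}$ with the correct exponents.
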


Thanks to Theorem \ref{thm:AGiscomb}, the concept of accuracy is combinatorial within the class of connected subgraph arrangements. It is not clear whether this notion is combinatorial in general, see \cite[Rem.~1.3(ii)]{mueckschroehrletran:flagaccurate}.
Theorem \ref{thm:accurateAG}
is proved in \S \ref{s:accurateAG}.

While it follows from Theorem \ref{thm:AGiscomb} that freeness within the class of connected subgraph arrangements is a combinatorial property, it is viable to ask whether each of the families among the free connected subgraph arrangements belongs to a general class satisfying
a stronger combinatorial freeness property. For instance, if $G$ is a cycle graph or a path graph, then $\CA_G$ is inductively free \cite[Prop.~3.1, Cor.~8.11]{cuntzkuehne:subgraphs}.
For the notion of 
\emph{inductive} freeness, see Definition
\ref{def:indfree}. 
Moreover, thanks to  \cite[Thm.~4.1]{cuntzkuehne:subgraphs}, $\CA_G$ is MAT-free for $G = A_{n,k}$. 
For the notion of \emph{MAT-freeness}, see Definition
\ref{def:MATfree}. 
MAT-freeness is a combinatorial property for arrangements in characteristic zero, \cite[Lem.~18]{Cunzmuecksch:MATfree}.
Cuntz and M\"ucksch raised the question whether
MAT-freeness implies inductive freeness \cite[Ques.~3]{Cunzmuecksch:MATfree}.
This appears to be the case 
for the class of connected subgraph arrangements. Computational evidence up to rank $8$ suggests the following strengthening of 
Theorem \ref{thm:freeAG}.

\begin{conjecture}
	\label{conj:freeAG2}
	The 
	arrangement $\CA_G$ is free if and only if it is inductively free.
\end{conjecture}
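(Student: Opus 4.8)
The plan is to establish the nontrivial implication, that every free $\CA_G$ is inductively free; the converse holds for any arrangement, since inductive freeness implies freeness by the Addition part of Terao's Addition--Deletion Theorem. By Theorem \ref{thm:freeAG}, freeness of $\CA_G$ forces $G$ to be a path-graph, a cycle-graph, an almost-path-graph, or a path-with-triangle-graph (Definition \ref{definition: cuntz kühne graph families}), so it suffices to treat these four families in turn. For path-graphs and cycle-graphs there is nothing to prove, since $\CA_G$ is already known to be inductively free by \cite[Prop.~3.1, Cor.~8.11]{cuntzkuehne:subgraphs}; the real content is the almost-path-graphs $A_{n,k}$ and the path-with-triangle-graphs, which I would handle by exhibiting explicit inductive chains, proceeding by induction on the number of nodes.

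For $A_{n,k}$ the natural starting point is \cite[Thm.~4.1]{cuntzkuehne:subgraphs}, which provides a MAT-partition of $\CA_{A_{n,k}}$. Rather than attempting the general, and open, implication that MAT-freeness yields inductive freeness \cite[Ques.~3]{Cunzmuecksch:MATfree}, I would argue directly: delete first the hyperplanes $H_I$ whose defining subset $I$ is largest in a suitable sense (say, containing a fixed endpoint of the long path and of maximal cardinality), and verify by a localization count that at each such step the hypotheses of the Addition--Deletion Theorem are met, i.e., the restriction $\CA_G^{H_I}$ is free with the exponents forced by the triple. Projective uniqueness (Theorem \ref{thm:AGiscomb}) is convenient here, as it lets one read off each restricted arrangement from the intersection lattice alone, without tracking explicit linear forms. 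After finitely many such deletions one should reach a product of a Boolean arrangement with a connected subgraph arrangement of a graph strictly lower in the Cuntz--K\"uhne hierarchy, so that the inductive hypothesis, together with the path and cycle cases, applies. The path-with-triangle-graphs are treated analogously: peel off $H_N$ and then the hyperplanes supported on the triangle, reducing after finitely many steps to a path-graph.

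The main obstacle, common to both remaining families, is that the restriction $\CA_G^{H_I}$ of a connected subgraph arrangement need not again be a connected subgraph arrangement, and neither need the intermediate deletions; one must therefore describe these arrangements explicitly and verify that they are inductively free with the predicted exponents. Carrying out this combinatorial bookkeeping uniformly across the two infinite families---in particular checking the exponent hypothesis of the Addition--Deletion Theorem at every node of the chain, and confirming that the chain does not stall before reaching the empty arrangement---is where the difficulty lies. The computational evidence up to rank $8$ mentioned above suggests the relevant patterns are stable in $n$, which is what makes such an inductive construction plausible.
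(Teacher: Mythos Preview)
The statement you are addressing is presented in the paper as a \emph{conjecture}, not a theorem; the paper does not prove it and offers only partial evidence in \S\ref{s:conj:freeAG2}. Your outline correctly performs the reduction via Theorem~\ref{thm:freeAG} and disposes of path and cycle graphs by the results you cite. The paper does the same and then records a little more: $\CA_{A_{n,2}}$ is an ideal arrangement in type $D_{n+1}$, hence inductively free by \cite{roehrle:ideal}, and $\CA_{A_{n,3}}$ for $n=5,6,7$ are ideal arrangements in type $E_{n+1}$, hence inductively free by \cite{cuntzroehrleschauenburg:ideal}. Beyond these Dynkin-type instances and computer verification through rank $8$, the conjecture remains open for general $A_{n,k}$ and for all $\Delta_{n,k}$.

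Your proposed strategy---peel off hyperplanes $H_I$ with $I$ maximal and verify the Addition--Deletion hypotheses at each step---is the natural one, but as you yourself concede, it is not a proof: the restrictions $\CA_G^{H_I}$ and the intermediate deletions leave the class of connected subgraph arrangements, and your outline supplies no mechanism for controlling their exponents or their inductive freeness. That is exactly the gap the paper leaves open. One minor correction: invoking projective uniqueness of $\CA_G$ (Theorem~\ref{thm:AGiscomb}) does not help you ``read off'' the restriction $\CA_G^{H_I}$; projective uniqueness concerns realizations of $L(\CA_G)$ and says nothing about whether the restricted arrangement is itself projectively unique. What you presumably want is the (separate, and true) fact that inductive freeness is a combinatorial property, so that the entire verification can in principle be carried out at the level of lattices---but that observation does not by itself produce the required inductive chain.
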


It is not known whether the family 
$\CA_G$, where $G = \Delta_{n,k}$ is a path-with-triangle graph (Definition \ref{definition: cuntz kühne graph families})
satisfies a stronger combinatorial freeness property. We give further evidence for Conjecture \ref{conj:freeAG2} in \S \ref{s:conj:freeAG2}.

Note, an analogue of Theorem \ref{thm:freeAG} for free multiplicities on $\CA_G$ is studied in \cite{mueckschroehrlewiesner}.

In \cite[Thm.~1.8(1)]{cuntzkuehne:subgraphs} Cuntz and K\"uhne classified all factored members among the connected subgraph arrangements. 
For the notion of a \emph{factored} or \emph{nice} arrangement, see Definition \ref{def:factored}.

\begin{theorem}
	\label{thm:factoredAG}
	The connected subgraph arrangement $\CA_G$ is factored if and only if either $G$ is  a path graph $P_n$ or a path-with-triangle graph $\Delta_{n,1}$ for $n \ge 2$.
\end{theorem}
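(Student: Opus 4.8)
The plan is to prove both directions by explicit combinatorial bookkeeping, leveraging two facts: first, that factoredness (existence of a nice partition in the sense of Terao) is a combinatorial invariant stable under products, and second, that a nonzero exponent of an inductively factored-type obstruction propagates. I would begin with the easy direction: show that $\CA_{P_n}$ and $\CA_{\Delta_{n,1}}$ are indeed factored. For the path graph $P_n$, the connected induced subgraphs are precisely the intervals $[i,j]\subseteq[n]$, so the hyperplanes are $H_{[i,j]}=\ker(x_i+\cdots+x_j)$; after the standard change of coordinates $y_k = x_1+\cdots+x_k$ these become the braid-type forms $y_j - y_{i-1}$, identifying $\CA_{P_n}$ with (a subarrangement closely related to) the braid arrangement, for which a nice partition is classical. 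For $\Delta_{n,1}$ one adds a single triangle at the end of the path; I would exhibit the nice partition directly, or invoke that $\CA_{\Delta_{n,1}}$ is supersolvable / inductively factored from the Cuntz--K\"uhne analysis and that inductive factoredness implies factoredness.

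For the hard direction — that no other connected graph $G$ yields a factored $\CA_G$ — the natural route is to combine Theorem \ref{thm:freeAG} with the general fact that factored arrangements are free. Thus any factored $\CA_G$ must have $G$ among the four families: path-graph, cycle-graph, almost-path-graph $A_{n,k}$, or path-with-triangle-graph $\Delta_{n,k}$. It then remains to rule out: (i) cycle graphs $C_n$ for $n \ge 3$; (ii) almost-path-graphs $A_{n,k}$ outside the path case; and (iii) path-with-triangle-graphs $\Delta_{n,k}$ for $k \ge 2$ (and to check the small-rank boundary cases where these families overlap or degenerate). Here I would argue by an exponent/obstruction count: a factored arrangement has a nice partition into blocks whose sizes are the exponents, and this forces strong numerical constraints on the characteristic polynomial and on the number of modular-type coatoms; alternatively, one shows each excluded family contains, as a localization or restriction, an arrangement already known not to be factored (factoredness is inherited by localizations), reducing to a finite check. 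The cleanest tool is likely that the smallest non-factored free arrangement appearing in these families — e.g. the localization of $\CA_{C_n}$ at a suitable flat, or $\CA_{\Delta_{n,2}}$ in low rank — can be verified non-factored by a direct computation, and that this obstruction persists under the graph operations that build up the larger members.

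The main obstacle I anticipate is (iii): separating $\Delta_{n,1}$ (factored) from $\Delta_{n,k}$ with $k\ge 2$ (not factored). Since both are free, freeness cannot distinguish them, and one genuinely needs the finer structure of nice partitions. I expect the key step to be identifying, inside $\CA_{\Delta_{n,k}}$ for $k\ge 2$, a rank-three (or rank-four) localization that is free but not factored — the classical minimal example being a certain non-fiber-type line arrangement — and then showing via the localization-stability of factoredness that its presence obstructs a nice partition of the whole arrangement. For the cycle graphs, a parallel argument works: $\CA_{C_n}$ is free and even inductively free, but already $\CA_{C_3}$ (or $\CA_{C_4}$) fails to admit a nice partition, and localization arguments push this up the family. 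The almost-path case $A_{n,k}$ is MAT-free but, outside the path subcase, should again contain such a non-factored localization. Once these localizations are pinned down and checked, assembling the proof is routine. I would present the affirmative direction in full detail and organize the negative direction around a lemma: \emph{if $\CA_G$ is factored then every localization $(\CA_G)_X$ is factored}, followed by the family-by-family exhibition of an offending flat.
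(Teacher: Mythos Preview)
First, note that the paper does not itself prove Theorem~\ref{thm:factoredAG}: it is quoted verbatim from \cite[Thm.~1.8(1)]{cuntzkuehne:subgraphs}, so there is no proof here to compare your proposal against. What the present paper proves is the strengthening Theorem~\ref{thm:factoredAG2} (factored $\Leftrightarrow$ inductively factored), and its argument \emph{assumes} the Cuntz--K\"uhne classification you are attempting to establish.

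That said, your proposed strategy for the hard direction has a genuine gap. You write that ``the natural route is to combine Theorem~\ref{thm:freeAG} with the general fact that factored arrangements are free.'' But it is \emph{not} a general fact that nice arrangements are free; there is no such theorem to cite. (Lemma~\ref{lem:FactoredFree}, phrased for arrangements that are factored \emph{and also} free, already signals that these are a priori independent conditions.) So you cannot simply reduce to the four free families of Theorem~\ref{thm:freeAG} and then eliminate three of them. Your ``alternative'' route --- factoredness is preserved under localization (Remark~\ref{rem:factored}), hence inherited under the graph operations of Lemma~\ref{lem:local} --- is the correct mechanism and is what the Cuntz--K\"uhne argument uses: one exhibits a finite list of small graphs $G$ for which $\CA_G$ is not factored, and then shows that every connected graph outside $\{P_n,\Delta_{n,1}\}$ admits one of these as an induced subgraph or edge contraction. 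This bypasses freeness entirely; you should promote it from an aside to the main line.

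Two smaller corrections. The arrangement $\CA_{\Delta_{n,1}}$ is \emph{not} supersolvable for $n\ge 3$ (by \cite[Thm.~1.8(2)]{cuntzkuehne:subgraphs} the only supersolvable $\CA_G$ are the path cases), so for the forward direction you must produce a nice partition by hand; the present paper does exactly this in \S\ref{s:thm:factoredAG2}, even giving an inductive factorization. And your claim that ``already $\CA_{C_3}$ \ldots\ fails to admit a nice partition'' is false: $C_3=\Delta_{2,1}$, so $\CA_{C_3}$ is one of the factored arrangements in the statement. The cycle obstruction has to start at $C_4$.
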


We strengthen Theorem \ref{thm:factoredAG} as follows.

\begin{theorem}
	\label{thm:factoredAG2}
	The arrangement $\CA_G$ is factored if and only if it is inductively factored.
\end{theorem}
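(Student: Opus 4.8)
The plan is to prove Theorem~\ref{thm:factoredAG2} by combining the classification in Theorem~\ref{thm:factoredAG} with an explicit construction of inductive factorizations for the two surviving families. Since inductive factorization trivially implies factorization (nice partitions built by induction are in particular nice), only the forward direction needs work: assuming $\CA_G$ is factored, Theorem~\ref{thm:factoredAG} tells us that $G$ is either a path graph $P_n$ or a path-with-triangle graph $\Delta_{n,1}$, so it suffices to show that $\CA_{P_n}$ and $\CA_{\Delta_{n,1}}$ are inductively factored. Here I would use the fact, recalled in the introduction, that $\CA_{P_n}$ is already known to be inductively free (hence MAT-free) by \cite[Prop.~3.1, Cor.~8.11]{cuntzkuehne:subgraphs}; I would check whether the inductive chain realizing freeness can be promoted to a chain of inductive factorizations, exploiting the standard implication that an inductively free arrangement with a suitable exponent pattern along the chain is inductively factored (this is the Jambu–Paris / Hoge–Röhrle circle of ideas relating $\mathrm{MAT}$-freeness, inductive freeness and inductive factoredness).

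Concretely, I would proceed by induction on $n$. For the path graph $P_n$, the hyperplanes are $H_{[i,j]} = \ker(x_i + \cdots + x_j)$ for intervals $[i,j] \subseteq [n]$. The natural move is to delete and restrict with respect to the hyperplane $H_{[n,n]} = \ker x_n$ (or $H_{[1,n]}$); the deletion $\CA_{P_n} \setminus \{H\}$ should be related to $\CA_{P_{n-1}}$ up to an empty factor, and the restriction $\CA_{P_n}^{H}$ should again be a connected subgraph arrangement of path type in one fewer variable. I would then verify the three defining conditions of an inductive factorization \emph{(addition–deletion for nice partitions)}: that a nice partition $\pi'$ of the restriction lifts to a nice partition $\pi$ of the deletion by adjoining $H$ to the appropriate block, with the numerical exponent-compatibility condition $|\pi| = |\pi'|$ or $|\pi|=|\pi'|+1$ holding. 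For $\Delta_{n,1}$, which is a path with one extra triangle at the end, the same strategy applies: peel off the pendant triangle structure by a short sequence of deletions/restrictions reducing to a path graph case, tracking the partition at each step.

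The main obstacle I expect is bookkeeping the restriction $\CA_G^H$ correctly: restrictions of connected subgraph arrangements need not obviously be connected subgraph arrangements, and one must identify $\CA_{P_n}^{H_{[n,n]}}$ (and the analogous restriction for $\Delta_{n,1}$) explicitly — presumably as $\CA_{P_{n-1}}$ possibly with some coordinate identifications or with a few extra hyperplanes coming from intervals that straddled $n$. Getting this identification right, and then checking that the nice partition on the restriction (which exists by the inductive hypothesis) has blocks that are sections for the deletion in the sense required by the definition of nice partition, is the delicate point. A secondary obstacle is the base cases: one must directly exhibit nice partitions for small $P_n$ and for $\Delta_{2,1}$ (respectively $\Delta_{3,1}$), which should be a finite check, possibly assisted by the computational evidence mentioned in the introduction. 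Once the restriction geometry is pinned down, the rest is a routine induction along the addition–deletion theorem for factored arrangements.

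Alternatively, if the restriction identification proves awkward, I would fall back on the known MAT-freeness of $\CA_{A_{n,k}}$ from \cite[Thm.~4.1]{cuntzkuehne:subgraphs} together with the implication MAT-free $\Rightarrow$ inductively factored for the path-graph family (noting $P_n = A_{n,0}$ or a degenerate case thereof), and handle $\Delta_{n,1}$ separately by its direct inductive chain; this sidesteps the need to classify all restrictions and reduces the problem to the two explicit families already isolated by Theorem~\ref{thm:factoredAG}.
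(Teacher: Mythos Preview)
Your overall architecture matches the paper's: reduce via Theorem~\ref{thm:factoredAG} to the two families $P_n$ and $\Delta_{n,1}$, then exhibit inductive factorizations for each. However, there are two genuine problems and one missed simplification.

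For $P_n$ you are working too hard. The paper simply invokes that $\CA_{P_n}$ is supersolvable (\cite[Cor.~8.11]{cuntzkuehne:subgraphs}) and that supersolvable arrangements are inductively factored (\cite{jambuparis:factored}, \cite[Prop.~3.11]{hogeroehrle:factored}). There is no need to trace an inductive chain by hand or to ``promote'' inductive freeness.

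For $\Delta_{n,1}$ your instinct to run addition--deletion from $\CA_{\Delta_{n-1,1}}$ to $\CA_{\Delta_{n,1}}$ is exactly what the paper does, and you correctly flag the key obstacle: one of the restrictions leaves the class of connected subgraph arrangements. But identifying it is not enough; this is precisely where the work lies. In the paper's chain the first $n+1$ restrictions do land back in $\CA_{\Delta_{n-1,1}}$, but the final one is $\CA_{\Delta_{n-1,1}} \cup \{\ker(x_0-x_1)\}$, and to show \emph{that} arrangement is inductively factored requires a separate auxiliary lemma (the paper's Lemma~\ref{lem:factoredAG2}) about an arrangement $\CB_n = \CA_{P_n} \cup \{\text{some extra hyperplanes}\}$ with its own inductive factorization. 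Your proposal does not anticipate this second layer of induction, and without it the argument does not close.

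Your fallback is wrong and should be discarded: MAT-freeness does \emph{not} imply inductive factoredness. Indeed $\CA_{A_{n,k}}$ is MAT-free for all $n,k$ by \cite[Thm.~4.1]{cuntzkuehne:subgraphs}, yet by Theorem~\ref{thm:factoredAG} it is not even factored unless it degenerates to a path. So there is no general implication to invoke here, and the $\Delta_{n,1}$ case genuinely requires the explicit construction.
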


For the notions of factoredness and inductive factoredness, see Definitions \ref{def:factored} and \ref{def:indfactored}.
Theorem \ref{thm:factoredAG2} is proved in
\S \ref{s:thm:factoredAG2}.

Recall the graphs from Theorem \ref{thm:freeAG}
(Definition \ref{definition: cuntz kühne graph families}).
If $G = P_n$, then $\CA_G$ is supersolvable (\cite[Thm.~1.8(2)]{cuntzkuehne:subgraphs}), so it is $K(\pi,1)$, by \cite[Prop.~5.12, Thm.~5.113]{orlikterao:arrangements}.
Moreover, $\CA_{C_3}$ is factored, so is also $K(\pi,1)$, owing to \cite{paris:factored}.
In general, $\CA_G$ is not $K(\pi,1)$.
Indeed, the following theorem shows that the graphs $G$ of possible aspherical connected subgraph arrangements $\CA_G$ are extremely restricted.

In general, $K(\pi,1)$ arrangements need not be free, e.g.~see \cite[Fig.~5.4]{orlikterao:arrangements}. However, for connected subgraph arrangements, this does seem to be the case.

\begin{theorem}
	\label{thm:kpi1}
		If $\CA_G$ is $K(\pi,1)$, then $\CA_G$ is free with the possible exception when $G = K_4$.
\end{theorem}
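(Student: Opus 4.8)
The plan is to prove the contrapositive combined with the classification in Theorem~\ref{thm:freeAG}: assuming $\CA_G$ is $K(\pi,1)$, I want to show that $G$ must be one of the four families (path, cycle, almost-path, path-with-triangle) or else $G = K_4$. Since asphericity is combinatorial within this class (a consequence of Theorem~\ref{thm:AGiscomb}), and since the product decomposition reduces everything to connected $G$, the first step is to set up a forbidden-subgraph / restriction argument: if $H$ is an induced subgraph of $G$, then $\CA_H$ occurs as the restriction of $\CA_G$ to a suitable coordinate subspace (identify the flat where all $x_i$ with $i \notin N(H)$ vanish), and restrictions of $K(\pi,1)$ arrangements in this setting inherit asphericity along such flats. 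Thus it suffices to exhibit a finite list of "minimal bad" graphs $G_1, \dots, G_m$ such that (i) each $\CA_{G_j}$ is \emph{not} $K(\pi,1)$, and (ii) every connected graph that is not a path, cycle, almost-path, or path-with-triangle, and is not $K_4$, contains one of the $G_j$ as an induced subgraph.

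The second step is the structural/graph-theoretic half: carry out a case analysis on connected graphs $G$ avoiding all the forbidden induced subgraphs $G_j$, and show the only survivors are the four free families plus $K_4$. This is where I would lean on the bookkeeping already present in \cite{cuntzkuehne:subgraphs} — their classification of free, simplicial, and supersolvable members proceeds by exactly this kind of induced-subgraph obstruction analysis, so many of the relevant small graphs (and the argument that, e.g., a long induced path forces the whole graph to be a path, that a vertex of degree $\ge 3$ away from a triangle creates an obstruction, etc.) should be reusable essentially verbatim. I expect the candidate forbidden graphs to include small trees like the star $K_{1,3}$ decorated with extra edges, short paths with a pendant vertex, two triangles sharing structure, the $4$-cycle with a chord plus a pendant, and a handful of rank-$3$ or rank-$4$ configurations; for each I need the non-$K(\pi,1)$ certificate.

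The third step — and the main obstacle — is establishing that each of the finitely many minimal bad arrangements $\CA_{G_j}$ fails to be $K(\pi,1)$. There is no purely combinatorial criterion for asphericity, so for each $G_j$ I would need a concrete obstruction: the cleanest tool is that an aspherical arrangement must be $2$-formal (indeed formal), by \cite{falkrandell:homotopyII}, and more sharply that a $K(\pi,1)$ arrangement whose restriction to every modular-or-localized flat behaves well cannot contain certain non-fibered rank-$3$ subarrangements; alternatively one can use the Falk--Randell/Paris results on the lower central series ranks, or check directly that the complement has the wrong lower bound on $b_3$ relative to what an aspherical space of that Poincaré polynomial would force (comparing $\sum \dim H^i$ via the LCS/Orlik--Solomon formula against the aspherical constraint). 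Realistically I expect to handle the rank-$3$ members by the classical Falk--Randell criterion for $K(\pi,1)$ line arrangements in $\BBP^2$ (fiber-type vs.\ not) and the handful of genuinely higher-rank members, including ruling out all proper supergraphs of $K_4$ so that $K_4$ itself remains the lone open case, by restriction back down to those rank-$3$ obstructions plus a direct formality computation. The delicate point throughout is that the list of forbidden $G_j$ must be simultaneously large enough to force the classification in step two and small enough that every $\CA_{G_j}$ admits such an explicit non-asphericity certificate; reconciling these two constraints, and in particular pinning down why $K_4$ resists every available certificate (so that it must be left as a genuine exception), is the crux of the proof.
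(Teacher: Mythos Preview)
Your overall architecture (contrapositive, finite list of forbidden graphs, graph-theoretic case analysis) matches the paper's, but there are two genuine gaps.

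First, the hereditary mechanism is misidentified. You write that $\CA_{G[S]}$ is the \emph{restriction} of $\CA_G$ to the coordinate flat $\{x_i=0 : i\notin S\}$ and that restrictions inherit asphericity. Neither claim is correct: restrictions of $K(\pi,1)$ arrangements need not be $K(\pi,1)$, and the arrangement you describe on that flat is not $\CA_{G[S]}$. The right statement is that $\CA_{G[S]}$ arises as a \emph{localization} of $\CA_G$ (at the flat $\{x_i=0 : i\in S\}$), and it is localizations that preserve asphericity, by Oka's observation (Remark~\ref{rem:local}). Equally important, the paper uses a second localization operation you omit: edge contraction $G\mapsto G/e$ also yields a localization (Lemma~\ref{lem:local}(ii)). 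Without contractions the forbidden-minor list does not stay finite and the case analysis in rank $\ge 5$ cannot close; the paper's proof repeatedly contracts down to rank $4$ or $5$ before matching against Figure~\ref{fig:SC-nP}.

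Second, your proposed non-$K(\pi,1)$ certificates will not work here. Formality cannot serve as an obstruction because \emph{every} $\CA_G$ is (combinatorially) formal --- that is exactly Theorem~\ref{thm:AGformal}. And ``fiber-type vs.\ not'' is not a $K(\pi,1)$ criterion in rank $3$: simplicial non-fiber-type arrangements are $K(\pi,1)$ by Deligne. The paper's actual certificates are: (a) the Falk--Randell ``simple triangle'' obstruction for real rank-$3$ arrangements, sometimes reached via a Randell lattice-isotopy deformation (this is how $G_1$ is handled in Proposition~\ref{prop:g1}); and (b) Hattori's theorem that generic arrangements are not aspherical, applied to generic localizations inside $\CA_{G_7}$ and $\CA_{G_8}$ (Remark~\ref{rem:g7g8}). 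These are the tools you would need to make step three go through.
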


While $\CA_{K_4}$ is not free, it is unknown at present whether $\CA_{K_4}$ is 
$K(\pi,1)$.
Theorem \ref{thm:kpi1} is proved in \S\ref{s:thm:kpi1}.

A hyperplane arrangement is called formal provided all linear dependencies among the defining forms of the hyperplanes are generated by ones corresponding to intersections  of codimension two.
The significance of this notion stems from the fact that
complex arrangements with aspherical complements are formal, \cite[Thm.\ 4.2]{falkrandell:homotopy}.
In addition, free arrangements are known to be formal,
\cite[Cor.~2.5]{yuzvinsky:obstruction},
and factored arrangements are formal, \cite[Thm.~1.1]{moellermueckschroehre:formal}.
In our next result we show that any connected subgraph arrangement $\CA_G$ is combinatorially formal, see Definition \ref{def:combformal}.

\begin{theorem}
	\label{thm:AGformal}
	For a fixed field $\BBK$, $\CA_G(\BBK)$ is combinatorially formal.
\end{theorem}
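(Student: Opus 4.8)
The plan is to prove the stronger statement that $\CA_G(\BBK)$ is combinatorially formal by exhibiting, for every arrangement $\CA$ whose intersection lattice agrees with that of $\CA_G(\BBK)$, an explicit generating set for the relation space among the defining forms consisting only of relations supported on rank-$2$ flats. By Theorem~\ref{thm:AGiscomb}, $\CA_G$ is projectively unique, so in fact the only arrangement with the lattice of $\CA_G$ is $\CA_G$ itself (up to the obvious equivalence); hence formality of $\CA_G(\BBK)$ already implies combinatorial formality, and the whole problem reduces to showing that the single arrangement $\CA_G(\BBK)$ is formal. So the first step is to invoke Theorem~\ref{thm:AGiscomb} to make this reduction precise, carefully checking that projective uniqueness transports formality (which is a statement purely about the linear span of the defining forms, hence invariant under the projective transformations witnessing uniqueness).

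The second and main step is to prove that $\CA_G(\BBK)$ itself is formal. The defining forms are the linear forms $\alpha_I = \sum_{i \in I} x_i$ for the connected induced subsets $I \subseteq N$. I would work in the relation space $R(\CA_G)$, the kernel of the map $\BBK^{\CA_G} \to V^*$ sending the basis vector $e_I$ to $\alpha_I$, and the codimension-two piece $R_2(\CA_G)$ spanned by relations whose support lies in a rank-$2$ flat. The obvious "local" relations are of the form $\alpha_I + \alpha_J = \alpha_{I\cup J} + \alpha_{I \cap J}$ whenever $I, J$ and $I \cup J$ and $I \cap J$ are all connected, and the degenerate variants when $I \cap J = \varnothing$ and $I \cup J$ is still connected, giving $\alpha_I + \alpha_J = \alpha_{I \cup J}$; these genuinely come from rank-$2$ flats. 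The claim to establish is that these span all of $R(\CA_G)$. A clean way to organise this is by induction on $n = |N|$, peeling off a vertex: choose a vertex $v$ and split the connected subsets into those containing $v$ and those not; the latter are the connected subsets of $G - v$ together with its relations (available by induction, once one handles that $G-v$ may be disconnected using the product compatibility noted before Theorem~\ref{thm:AGiscomb} and the fact that a product of formal arrangements is formal), and one must show every relation can be rewritten, modulo $R_2$, so that the coefficients of the $e_I$ with $v \in I$ are killed. For this one uses the local relations $\alpha_{I} = \alpha_{I \setminus \{v\}} + \alpha_{\{v\}} - (\text{lower overlap terms})$ — valid when $I \setminus \{v\}$ is connected — to push the "$v$-part" of any relation down to relations not involving $v$ beyond the single form $\alpha_{\{v\}}$, and then a dimension count closes the argument.

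The technical heart, and the step I expect to be the main obstacle, is controlling the subsets $I$ containing $v$ for which $I \setminus \{v\}$ is \emph{not} connected — i.e. $v$ is a cut vertex of $G[I]$. For such $I$ the naive reduction fails, and one needs a replacement: pick an edge of $G[I]$ at $v$, write $I = I_1 \cup \dots \cup I_k \cup \{v\}$ according to the components of $G[I \setminus v]$ attached to $v$, and use a telescoping sequence of local relations through the connected intermediate sets $I_1 \cup \{v\}$, $I_1 \cup I_2 \cup \{v\}$, $\dots$ to express $\alpha_I$ in terms of forms that are either "smaller in $v$" or do not involve $v$; verifying that every intermediate set in this telescope is a connected induced subgraph, and that each successive difference is a genuine rank-$2$ relation, is the combinatorial crux. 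An alternative, possibly cleaner route that sidesteps the cut-vertex bookkeeping is to produce a \emph{basis-style} argument: show directly that $\dim R_2(\CA_G) = |\CA_G| - n$ by constructing an explicit spanning set of local relations of that size and an explicit complement, using that $\{\alpha_{\{i\}} : i \in [n]\}$ already spans $V^*$ so that the forms $\alpha_I$ for $|I| \geq 2$ are all "redundant" and each contributes exactly one independent local relation expressing it via singletons along a spanning tree of $G[I]$. I would try this second approach first, as it converts the problem into checking that these tree-relations, together with the obvious compatibility relations among them, generate everything — which can be verified by a rank computation independent of the structure of $G$.
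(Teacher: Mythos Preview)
Your second, basis-style approach is correct and close in spirit to the paper's argument, though packaged differently. The paper does not go through projective uniqueness at all: it simply invokes the lc-basis criterion of Proposition~\ref{prop:lcbasis}, observing that the $n$ coordinate hyperplanes $\CB = \{\ker x_i\}$ form an lc-basis of $\CA_G(\BBK)$, since for any connected $I$ with $|I|\ge 2$ one may peel off a non-cut vertex $j$ of $G[I]$ and then $H_I$ lies in the localization at $H_{I\setminus\{j\}}\cap H_{\{j\}}$, so inductively $\lc(\CB)=\CA_G$. Because line-closure is a purely lattice-theoretic notion, this yields combinatorial formality directly in three lines, with no appeal to Theorem~\ref{thm:AGiscomb}. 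Your basis-style argument is essentially a hands-on verification of the same mechanism from the relation-space side: the $|\CA_G|-n$ three-term relations $\alpha_I = \alpha_{I\setminus\{j\}} + \alpha_{\{j\}}$ are rank-$2$ relations, and they are linearly independent (triangular with respect to $|I|$), hence span $R(\CA_G)$; combined with your projective-uniqueness reduction this is a valid, if longer, route. Two small corrections: the four-term relation $\alpha_I + \alpha_J = \alpha_{I\cup J} + \alpha_{I\cap J}$ with $I\cap J \neq \varnothing$ is \emph{not} supported on a rank-$2$ flat (the four forms $\alpha_I,\alpha_J,\alpha_{I\cap J},\alpha_{I\cup J}$ span a $3$-dimensional space), so it should be dropped from your list of usable local relations --- fortunately your argument never needs it; and your first approach (induction on $n$ by deleting a vertex of $G$, with its cut-vertex bookkeeping) is an unnecessary detour once the basis-style count is in hand.
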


The proof of this theorem which is presented in \S \ref{s:thm:AGformal} is based on results from \cite{moellermueckschroehre:formal}.
Note that Theorem \ref{thm:AGformal} is false for general $0/1$-arrangements, as, for instance, the generic rational  $3$-arrangement given by $Q(\CA) = xyz(x+y+z)$ is clearly not formal.

In our final section \S \ref{sec:idealAG} we investigate ideal subarrangements $\CA_\CI$ of the connected subgraph arrangements $\CA_G$ when $G$ is an almost-path graph $A_{n,k}$.
For instance, generalizing \cite[Thm.~4.1]{cuntzkuehne:subgraphs}, in Proposition \ref{prop:idealAG} we show that each ideal subarrangement $\CA_\CI$ of $\CA_{A_{n,k}}$ is also MAT-free.
We go on to determine certain subarrangements $\CA_\CI$ of $\CA_{A_{n,k}}$ which are inductively factored and $K(\pi,1)$, see Proposition \ref{prop:AGs2}.

For general information about arrangements
we refer the reader to
\cite{orlikterao:arrangements}.

\section{Preliminaries}
\label{sect:prelims}

\subsection{Hyperplane arrangements}
\label{ssect:arrangements}
Let $V = \BBC^\ell$
be an $\ell$-dimensional complex vector space.
A \emph{hyperplane arrangement} is a pair
$(\CA, V)$, where $\CA$ is a finite collection of hyperplanes in $V$.
Usually, we simply write $\CA$ in place of $(\CA, V)$.
We denote the empty arrangement in $V$ by  $\Phi_\ell$ .

The \emph{lattice} $L(\CA)$ of $\CA$ is the set of subspaces of $V$ of
the form $H_1\cap \ldots \cap H_i$ where $\{ H_1, \ldots, H_i\}$ is a subset
of $\CA$.
For $X \in L(\CA)$, we have two associated arrangements,
firstly
$\CA_X :=\{H \in \CA \mid X \subseteq H\} \subseteq \CA$,
the \emph{localization of $\CA$ at $X$},
and secondly,
the \emph{restriction of $\CA$ to $X$}, $(\CA^X,X)$, where
$\CA^X := \{ X \cap H \mid H \in \CA \setminus \CA_X\}$.
The lattice $L(\CA)$ is a partially ordered set by reverse inclusion:
$X \le Y$ provided $Y \subseteq X$ for $X,Y \in L(\CA)$.

Throughout, we only consider arrangements $\CA$
such that $0 \in H$ for each $H$ in $\CA$.
These are called \emph{central}.
In that case the \emph{center}
$T(\CA) := \cap_{H \in \CA} H$ of $\CA$ is the unique
maximal element in $L(\CA)$  with respect
to the partial order.
A \emph{rank} function on $L(\CA)$
is given by $r(X) := \codim_V(X)$.
The \emph{rank} of $\CA$
is defined as $r(\CA) := r(T(\CA))$.

Following \cite[\S 2.3]{cuntzkuehne:subgraphs},
we call a rational arrangement $\CA$  a \emph{$0/1$-arrangement} if all hyperplanes in $\CA$ are of the form $H_I = \ker\sum_{i\in I}x_i$ for some $I \subseteq [n]$. Of course, by Definition \ref{def:AG}, connected subgraph arrangements are $0/1$-arrangements.

The following families of graphs play a 
central role in the investigation of free simple connected subgraph arrangements. They are equally crucial for our study.

\begin{defn}[\cite{cuntzkuehne:subgraphs}]\label{definition: cuntz kühne graph families}
	\begin{itemize}
		\item The \emph{path graph} $P_n$ on $n$ vertices.
		\begin{figure}[H]
			\begin{tikzpicture}
				\node (t) at (-1,0) {$P_n$:};
				\node (v1) at (0,0) {};
				\node (v2) at (2,0) {};
				\node (v2h) at (2.5,0) {};
				\node (vmid) at (3,0) {};
				\node (vn-1h) at (3.5,0) {};
				\node (vn-1) at (4,0) {};
				\node (vn) at (6,0) {};
				\fill ($(v1)$) circle[radius=2pt];
				\fill ($(v2)$) circle[radius=2pt];
				\fill ($(vn-1)$) circle[radius=2pt];
				\fill ($(vn)$) circle[radius=2pt];
				\node[below] at ($(v1)$) {$1$};
				\node[below] at ($(v2)$) {$2$};
				\node[below] at ($(vn-1)$) {$n-1$};
				\node[below] at ($(vn)$) {$n$};
				\draw ($(v1)$) -- ($(v2)$) -- ($(v2h)$);
				\draw[dashed] ($(v2h)$) -- ($(vn-1h)$);
				\draw ($(vn-1h)$) -- ($(vn-1)$) -- ($(vn)$);
			\end{tikzpicture}
		\end{figure}
		\item The \emph{almost-path graph} $A_{n,k}$ on $n+1$ vertices, where $1\leq k\leq n$. Draw a path graph on $n$ vertices, add an additional vertex $n+1$, and connect the vertices $k$ and $n+1$ by an edge.
		\begin{figure}[H]
			
			\begin{tikzpicture}
				\node (t) at (-1,0) {$A_{n,k}$:};
				\node (v1) at (0,0) {};
				\node (v2) at (2,0) {};
				\node (v2h+) at (2.5,0) {};
				\node (v2kmid) at (3,0) {};
				\node (vkh-) at (3.5,0) {};
				\node (vk) at (4,0) {};
				\node (vn+1) at (4,1) {};
				\node (vkh+) at (4.5,0) {};
				\node (vknmid) at (5,0) {};
				\node (vnh-) at (5.5,0) {};
				\node (vn) at (6,0) {};
				\fill ($(v1)$) circle[radius=2pt];
				\fill ($(v2)$) circle[radius=2pt];
				\fill ($(vk)$) circle[radius=2pt];
				\fill ($(vn)$) circle[radius=2pt];
				\fill ($(vn+1)$) circle[radius=2pt];
				\node[below] at ($(v1)$) {$1$};
				\node[below] at ($(v2)$) {$2$};
				\node[below] at ($(vk)$) {$k$};
				\node[below] at ($(vn)$) {$n$};
				\node[right] at ($(vn+1)$) {$n+1$};
				\draw ($(v1)$) -- ($(v2)$) -- ($(v2h+)$);
				\draw[dashed] ($(v2h+)$) -- ($(vkh-)$);
				\draw ($(vkh-)$) -- ($(vk)$) -- ($(vkh+)$);
				\draw ($(vk)$) -- ($(vn+1)$);
				\draw[dashed] ($(vkh+)$) -- ($(vnh-)$);
				\draw ($(vnh-)$) -- ($(vn)$);
			\end{tikzpicture}
		\end{figure}
		\item The \emph{cycle graph} $C_n$ on $n$ vertices. Draw $n$ vertices and connect vertex $1$ and $2$, $2$ and $3$, $\dots$, $n$ and $1$.
		\begin{figure}[H]
			\begin{tikzpicture}
				
				\node (t) at (-2.75,0) {$C_{n}$:};
				\node (v1) at (0.85065080835203988,0.61803398874989468) {};  
				\node (v2) at (0.85065080835203988,-0.61803398874989468) {}; 
				\node (v2+) at (0.32491969623290623,-1.) {}; 
				\node (vn-1-) at (-0.85065080835203988,-0.61803398874989468) {};  
				\node (vn-1) at (-1.0514622242382672,0.0) {}; 
				\node (vn) at (-0.32491969623290623,1.) {};
				\fill ($(v1)$) circle[radius=2pt];
				\fill ($(v2)$) circle[radius=2pt];
				\fill ($(vn-1)$) circle[radius=2pt];
				\fill ($(vn)$) circle[radius=2pt];
				\node[right] at ($(v1)$) {$1$};
				\node[right] at ($(v2)$) {$2$};
				\node[left] at ($(vn-1)$) {$n-1$};
				\node[above] at ($(vn)$) {$n$};
				\draw ($(v2+)$) arc (-72:220:1.0514622242382672) ;
				\draw[dashed] ($(vn-1-)$) arc (216:290:1.0514622242382672) ;			\end{tikzpicture}
		\end{figure}
		\item The \emph{path-with-triangle graph} $\Delta_{n,k}$, where $1\leq k\leq n-1$. Draw a path graph on $n$ vertices, add an additional vertex $n+1$, and connect the vertices $k$ and $n+1$ by an edge as well as the vertices $k+1$ and $n+1$.
		\begin{figure}[H]
			\begin{tikzpicture}
				\node (t) at (-1,0) {$\Delta_{n,k}$:};
				\node (v1) at (0,0) {};
				\node (v2) at (2,0) {};
				\node (v2h+) at (2.5,0) {};
				\node (v2kmid) at (3,0) {};
				\node (vkh-) at (3.5,0) {};
				\node (vk) at (4,0) {};
				\node (vn+1) at (5,1.6) {};
				\node (vk+1) at (6,0) {};
				\node (vk+1h+) at (6.5,0) {};
				\node (vk+1nmid) at (7,0) {};
				\node (vnh-) at (7.5,0) {};
				\node (vn) at (8,0) {};
				\fill ($(v1)$) circle[radius=2pt];
				\fill ($(v2)$) circle[radius=2pt];
				\fill ($(vk)$) circle[radius=2pt];
				\fill ($(vk+1)$) circle[radius=2pt];
				\fill ($(vn)$) circle[radius=2pt];
				\fill ($(vn+1)$) circle[radius=2pt];
				\node[below] at ($(v1)$) {$1$};
				\node[below] at ($(v2)$) {$2$};
				\node[below] at ($(vk)$) {$k$};
				\node[below] at ($(vk+1)$) {$k+1$};
				\node[below] at ($(vn)$) {$n$};
				\node[right] at ($(vn+1)$) {$n+1$};
				\draw ($(v1)$) -- ($(v2)$) -- ($(v2h+)$);
				\draw[dashed] ($(v2h+)$) -- ($(vkh-)$);
				\draw ($(vkh-)$) -- ($(vk)$) -- ($(vk+1)$) -- ($(vk+1h+)$);
				\draw ($(vk)$) -- ($(vn+1)$) -- ($(vk+1)$);
				\draw[dashed] ($(vk+1h+)$) -- ($(vnh-)$);
				\draw ($(vnh-)$) -- ($(vn)$);
			\end{tikzpicture}
		\end{figure}
	\end{itemize}
\end{defn}

\subsection{Free arrangements}
\label{ssect:free}

Free arrangements play a crucial role in the theory of arrangements;
see \cite[\S 4]{orlikterao:arrangements} for the definition and
basic properties. If $\CA$ is free, then
we can associate with $\CA$ the multiset of its \emph{exponents},
denoted $\exp \CA$. We sometimes write 
$(e_1,\ldots,e_\ell)_\le$ for the ordered set of exponents, i.e., to indicate that 
$e_1 \leq e_2 \leq \ldots \leq e_\ell$.

Terao's \emph{Factorization Theorem}
\cite{terao:freefactors} shows
that the Poincar\'e polynomial
of a free arrangement $\CA$
factors into linear terms
given by the exponents of $\CA$
(cf.\ \cite[Thm.\ 4.137]{orlikterao:arrangements}):

\begin{theorem}
	\label{thm:freefactors}
	Suppose that
	$\CA$ is free with $\exp \CA = \{ b_1, \ldots , b_\ell\}$.
	Then
	\[
	\pi(\CA,t) = \prod_{i=1}^\ell (1 + b_i t).
	\]
\end{theorem}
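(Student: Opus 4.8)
The plan is to pass from the Poincar\'e polynomial to the characteristic polynomial and to read off its factorization from the freeness of the module of logarithmic derivations. Recall that $\pi(\CA,t)=(-t)^{\ell}\chi(\CA,-t^{-1})$, where $\chi(\CA,t)=\sum_{X\in L(\CA)}\mu(\hat{0},X)\,t^{\dim X}$ is the characteristic polynomial and $\mu$ the M\"obius function of $L(\CA)$; so it suffices to prove
\[
\chi(\CA,t)=\prod_{i=1}^{\ell}(t-b_i)
\]
and substitute back. Write $S=\BBK[x_1,\dots,x_\ell]$ and, for $0\le p\le\ell$, let $D^p(\CA)\subseteq\bigwedge^p\Der(S)$ be the module of logarithmic $p$-derivations of $\CA$, graded by polynomial degree, so $D^0(\CA)=S$ and $D^1(\CA)=D(\CA)$ is, by hypothesis, free with a homogeneous basis $\theta_1,\dots,\theta_\ell$ of polynomial degrees $b_1,\dots,b_\ell$.

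The engine of the argument is the Solomon--Terao formula, which for an arbitrary central arrangement expresses $\chi(\CA,t)$ as an explicit limit as $x\to 1$ of an alternating combination of the Hilbert series $\Poin(D^p(\CA),x)$, $p=0,\dots,\ell$ (equivalently, of those of the logarithmic $p$-forms). When $\CA$ is free this input becomes fully explicit: then $D^p(\CA)\cong\bigwedge^p_S D(\CA)$ for every $p$, so $D^p(\CA)$ is free with homogeneous basis $\{\theta_{i_1}\wedge\cdots\wedge\theta_{i_p}\mid i_1<\cdots<i_p\}$ and
\[
\Poin(D^p(\CA),x)=\frac{e_p\bigl(x^{b_1},\dots,x^{b_\ell}\bigr)}{(1-x)^{\ell}},
\]
where $e_p$ is the $p$-th elementary symmetric polynomial. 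Feeding these series into the Solomon--Terao formula and evaluating the limit is then routine, its only active ingredient being $(1-x^{b})/(1-x)\to b$ as $x\to 1$; the alternating sum collapses to $\sum_{p=0}^{\ell}(-1)^p e_p(b_1,\dots,b_\ell)\,t^{\ell-p}=\prod_{i=1}^{\ell}(t-b_i)$. Running this through $\pi(\CA,t)=(-t)^{\ell}\chi(\CA,-t^{-1})$ yields $\pi(\CA,t)=\prod_{i=1}^{\ell}(1+b_it)$. I would also check the extreme cases $p=0$ and $p=\ell$ and test the Boolean arrangement $Q(\CA)=x_1\cdots x_\ell$, where all $b_i=1$ and $\chi(\CA,t)=(t-1)^{\ell}$, to pin down every sign and normalization.

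The main obstacle is the Solomon--Terao formula itself, a theorem of comparable depth whose usual proof is a deletion--restriction induction on $|\CA|$, built on the long exact sequences relating $D^p(\CA)$, $D^p(\CA')$ and $D^p(\CA'')$ for a triple $(\CA,\CA',\CA'')$; a self-contained account must reproduce or cite that argument. By contrast, the identity $D^p(\CA)\cong\bigwedge^pD(\CA)$ for free $\CA$ is comparatively soft: the wedges $\theta_{i_1}\wedge\cdots\wedge\theta_{i_p}$ clearly lie in $D^p(\CA)$, they span it after localizing at the generic point of each hyperplane of $\CA$ (where $\CA$ looks Boolean), and a reflexivity/codimension-two argument forces the inclusion $\bigwedge^pD(\CA)\hookrightarrow D^p(\CA)$ to be an equality. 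One caveat worth recording is that Terao's Addition--Deletion theorem alone would give the factorization only for \emph{inductively} free arrangements --- there exist free arrangements admitting no such recursive reduction --- which is precisely why the general case is routed through Solomon--Terao. Accordingly I would organize the write-up as: (1) recall the $\pi\leftrightarrow\chi$ dictionary; (2) state and prove (or cite) Solomon--Terao; (3) prove $D^p(\CA)\cong\bigwedge^pD(\CA)$ via Saito's criterion; (4) substitute and take the limit.
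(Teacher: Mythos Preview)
The paper does not give its own proof of this statement: Theorem~\ref{thm:freefactors} is Terao's Factorization Theorem, quoted in the preliminaries with a reference to \cite{terao:freefactors} and \cite[Thm.~4.137]{orlikterao:arrangements}, and used later as a black box. There is therefore nothing in the paper to compare your argument against.

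That said, your outline is the standard route to the result and is essentially the proof presented in \cite[\S 4.5]{orlikterao:arrangements}: one invokes the Solomon--Terao formula expressing $\chi(\CA,t)$ through the Hilbert series of the modules $D^p(\CA)$, uses freeness to identify $D^p(\CA)\cong\bigwedge^p D(\CA)$ and hence compute those Hilbert series explicitly, and then evaluates the limit. Your diagnosis of where the real work lies is accurate: the Solomon--Terao formula is the substantial input, proved by deletion--restriction, while the exterior-power identification under freeness is comparatively routine via Saito's criterion. Your remark that Addition--Deletion alone would only handle inductively free arrangements is also correct and a good observation. So the proposal is sound, but be aware that in the context of this paper you would simply cite the theorem rather than reprove it.
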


Terao's celebrated \emph{Addition-Deletion Theorem}
\cite{terao:freeI} plays a
fundamental role in the study of free arrangements,
\cite[Thm.\ 4.51]{orlikterao:arrangements}. The fundamental \emph{Addition Deletion Theorem} 
due to Terao  \cite{terao:freeI} plays a 
crucial role in the study of free arrangements, 
\cite[Thm.~4.51]{orlikterao:arrangements}. 
Suppose $\CA \neq \Phi_\ell$. Fix a member $H_0$ in $\CA$. Set $\CA' = \CA \setminus \{H_0\}$ and  $\CA'' = \CA^{H_0}$. Then $(\CA, \CA', \CA'')$ is frequently referred to as a \emph{triple of arrangements} (with respect to $H_0$).

\begin{theorem}
	\label{thm:add-del}
	Suppose that $\CA \ne \Phi_\ell$.
	Let  $(\CA, \CA', \CA'')$ be a triple of arrangements. Then any
	two of the following statements imply the third:
	\begin{itemize}
		\item[(i)] $\CA$ is free with $\exp \CA = \{ b_1, \ldots , b_{\ell -1}, b_\ell\}$;
		\item[(ii)] $\CA'$ is free with $\exp \CA' = \{ b_1, \ldots , b_{\ell -1}, b_\ell-1\}$;
		\item[(iii)] $\CA''$ is free with $\exp \CA'' = \{ b_1, \ldots , b_{\ell -1}\}$.
	\end{itemize}
\end{theorem}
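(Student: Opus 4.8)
The plan is to translate freeness into the freeness of the module of logarithmic derivations and to read off all three implications from a single short exact sequence, closing each case with Saito's criterion and elementary homological properties of derivation modules. Set $S=\BBK[x_1,\dots,x_\ell]$; for a central arrangement $\CB$ one has the graded $S$-module $D(\CB)=\{\theta\in\Der_{\BBK}(S)\mid\theta(\alpha_H)\in\alpha_H S\ \text{for all}\ H\in\CB\}$, and $\CB$ is free precisely when $D(\CB)$ is $S$-free, in which case $\exp\CB$ is the multiset of degrees of a homogeneous basis. Two standard facts are used throughout. First, \emph{Saito's criterion}: homogeneous $\theta_1,\dots,\theta_\ell\in D(\CB)$ form a basis iff $\det(\theta_i(x_j))_{i,j}$ is a nonzero scalar multiple of $Q(\CB)=\prod_{H\in\CB}\alpha_H$; since $Q(\CB)$ always divides this determinant, once $\sum_i\pdeg\theta_i=|\CB|$ it suffices to check that the $\theta_i$ are $S$-linearly independent. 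Second, $D(\CB)$ is always reflexive — it is the kernel of $S^\ell\to\bigoplus_{H\in\CB}S/\alpha_H S$, hence a second syzygy — so it satisfies Serre's property $(S_2)$; over $S$ this forces freeness as soon as $D(\CB)$ has projective dimension $\le\ell-2$, or as soon as its reflexive hull is free.

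Fix the triple $(\CA,\CA',\CA'')$ coming from $H_0\in\CA$ and choose coordinates with $\alpha_{H_0}=x_\ell$; write $\oS=S/(x_\ell)$, the coordinate ring of $H_0$. Every $\theta\in D(\CA)$ satisfies $\theta(x_\ell)\in(x_\ell)$, hence descends to $\bar\theta\in\Der_{\BBK}(\oS)$, and one checks that $\bar\theta\in D(\CA'')$; this gives a graded $S$-module map $\rho\colon D(\CA)\to D(\CA'')$. A direct computation shows that $\ker\rho=x_\ell D(\CA')\subseteq D(\CA)$, which yields the exact sequence
\[
0\longrightarrow D(\CA')(-1)\xrightarrow{\ \cdot x_\ell\ }D(\CA)\xrightarrow{\ \rho\ }D(\CA'').
\]
Localizing at a height-one prime of $\oS$ reduces everything to a rank-$\le2$ arrangement, where $\rho$ is visibly onto; hence $C:=\operatorname{coker}\rho$ is supported in codimension $\ge2$ in $H_0$, so $\Ext^0_{\oS}(C,\oS)=\Ext^1_{\oS}(C,\oS)=0$.

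Now the three implications. \emph{Restriction} (assume (i) and (ii), derive (iii)): since $D(\CA')(-1)$ and $D(\CA)$ are $S$-free, $\operatorname{im}\rho=D(\CA)/x_\ell D(\CA')$ has projective dimension $\le1$ over $S$, hence is $\oS$-free, and the sequence gives $\operatorname{Hilb}(\operatorname{im}\rho)=\operatorname{Hilb}(D(\CA))-t\operatorname{Hilb}(D(\CA'))=\big(\textstyle\sum_{i<\ell}t^{b_i}\big)/(1-t)^{\ell-1}$; dualizing $0\to\operatorname{im}\rho\to D(\CA'')\to C\to0$ over $\oS$ and using $\Ext^{0,1}_{\oS}(C,\oS)=0$ shows $D(\CA'')^{*}$ is free, whence the reflexive $D(\CA'')$ is free with the same exponents as $\operatorname{im}\rho$, which forces $C=0$ and $\exp\CA''=(b_1,\dots,b_{\ell-1})$. \emph{Deletion} (assume (i) and (iii)) is symmetric. \emph{Addition} (assume (ii) and (iii), derive (i)): here I would produce a basis of $D(\CA)$ directly — take $\theta_\ell:=x_\ell\eta_\ell$, with $\eta_\ell$ the degree-$(b_\ell-1)$ member of a basis of $D(\CA')$, together with homogeneous lifts $\theta_1,\dots,\theta_{\ell-1}\in D(\CA)$ of degrees $b_1,\dots,b_{\ell-1}$ of a basis of $D(\CA'')$; the degrees sum to $|\CA|=|\CA'|+1$ by Saito applied to $\CA'$, so by Saito's criterion it remains only to check that $\theta_1,\dots,\theta_\ell$ are $S$-linearly independent, which after applying $\rho$ and dividing out $x_\ell$ reduces to invertibility of the transition matrix between $\{\theta_1,\dots,\theta_{\ell-1},\eta_\ell\}$ and a basis of $D(\CA')$.

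In each case the numerical statement — that the exponents of the third arrangement are exactly those listed in (i)--(iii) — then follows from Saito's identity $\sum_i\pdeg=|\CB|$, the relation $|\CA|=|\CA'|+1$, and the Hilbert-series identity coming from the exact sequence. The main obstacle is the addition direction: there one must know that the required lifts $\theta_1,\dots,\theta_{\ell-1}$ exist, equivalently that $C=\operatorname{coker}\rho$ has no generators in degrees $b_1,\dots,b_{\ell-1}$, and unlike in the other two directions this does not drop out of the exact sequence by itself. It is the compatibility of $\exp\CA'$ with $\exp\CA''$ that forces it, via a Hilbert-function count using the combinatorial deletion--restriction recursion $\pi(\CA,t)=\pi(\CA',t)+t\,\pi(\CA'',t)$ together with Terao's Factorization Theorem (Theorem~\ref{thm:freefactors}) to determine $\operatorname{Hilb}(D(\CA))$; once that is in place, $C=0$ follows as in the restriction case. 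The one irreducibly computational point is then the non-vanishing of the Saito determinant in the addition case, i.e.\ the linear independence of the candidate basis.
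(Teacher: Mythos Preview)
First, note that the paper does not give its own proof of this theorem; it is quoted as Terao's Addition--Deletion Theorem with a citation to \cite{terao:freeI} and \cite[Thm.~4.51]{orlikterao:arrangements}. So there is no in-paper argument to compare against, only the classical one in those references.

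Your homological framework is correct, and your restriction case ((i)$+$(ii)$\Rightarrow$(iii)) is essentially sound: with both $D(\CA)$ and $D(\CA')$ free, $M:=\operatorname{im}\rho=D(\CA)/x_\ell D(\CA')$ has $\operatorname{pd}_S\le 1$, hence is $\oS$-free by Auslander--Buchsbaum, and the dualization using $\Ext^{0,1}_{\oS}(C,\oS)=0$ then forces $D(\CA'')=M^{**}=M$.

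The deletion case, however, is \emph{not} symmetric, and this is a genuine gap. In restriction the freeness of $D(\CA')$ was what made $M$ free over $\oS$; in deletion you only know that $D(\CA)$ and $D(\CA'')$ are free, so you no longer have a two-term free $S$-resolution of $M$. From $0\to M\to D(\CA'')\to C\to 0$ the dualization still gives $M^*\cong D(\CA'')^*$, hence $M^{**}\cong D(\CA'')$, but a submodule of a free module need not be reflexive, so you cannot conclude $M=M^{**}$, $C=0$, or that $D(\CA')$ is free. The classical proof works instead with an explicit $\oS$-relation among the images $\bar\theta_1,\dots,\bar\theta_\ell$ of a basis of $D(\CA)$, and uses the numerical hypothesis $\exp\CA''\subset\exp\CA$ to single out the index that must be divided by $x_\ell$.

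The addition case has a circularity. You propose to determine $\operatorname{Hilb}(D(\CA))$ from $\pi(\CA,t)$ via Theorem~\ref{thm:freefactors}, but that theorem \emph{presupposes} that $\CA$ is free, which is precisely the conclusion you are after. The deletion--restriction recursion does give $\pi(\CA,t)=\prod_i(1+b_it)$, but a factored Poincar\'e polynomial does not determine $\operatorname{Hilb}(D(\CA))$ for a possibly non-free $\CA$; in the four-term exact sequence you only get $\operatorname{Hilb}(D(\CA))+\operatorname{Hilb}(C)=t\,\operatorname{Hilb}(D(\CA'))+\operatorname{Hilb}(D(\CA''))$, with no independent control on either summand on the left. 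So $C=0$ does not follow, and hence neither does the existence of your lifts $\theta_1,\dots,\theta_{\ell-1}$. The classical argument avoids this entirely by starting from a basis $\eta_1,\dots,\eta_\ell$ of $D(\CA')$ (not of $D(\CA'')$): a Euclidean-type reduction on the values $\eta_i(x_\ell)$ shows one can arrange $\eta_i\in D(\CA)$ for all $i\ne i_0$, and then $\{\eta_i:i\ne i_0\}\cup\{x_\ell\eta_{i_0}\}$ is a basis of $D(\CA)$ by Saito's criterion, with the hypothesis on $\exp\CA''$ forcing $\deg\eta_{i_0}=b_\ell-1$.
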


Theorem \ref{thm:add-del} motivates the notion of an
\emph{inductively free} arrangement,
\cite[Def.\ 4.53]{orlikterao:arrangements}.

\begin{defn}
	\label{def:indfree}
	The class $\CIF$ of \emph{inductively free} arrangements
	is the smallest class of arrangements subject to
	\begin{itemize}
		\item[(i)] $\Phi_\ell \in \CIF$ for each $\ell \ge 0$;
		\item[(ii)] if there exists a hyperplane $H_0 \in \CA$ such that both
		$\CA'$ and $\CA''$ belong to $\CIF$, and $\exp \CA '' \subseteq \exp \CA'$,
		then $\CA$ also belongs to $\CIF$.
	\end{itemize}
\end{defn}

Clearly, inductively free arrangements are free. However, the latter class properly contains the former, cf.~\cite[Ex.~4.59]{orlikterao:arrangements}.

\begin{remark}
	\label{rem:indfreetable}
	It is possible to describe an inductively free arrangement $\CA$ by means of
	a so called
	\emph{induction table}, cf.~\cite[\S 4.3, p.~119]{orlikterao:arrangements}.
	In this process we start with an inductively free arrangement
	and add hyperplanes successively ensuring that
	part (ii) of Definition \ref{def:indfree} is satisfied.
	This process is referred to as \emph{induction of hyperplanes}.
	This procedure amounts to
	choosing a total order on $\CA$, say
	$\CA = \{H_1, \ldots, H_m\}$,
	so that each of the subarrangements
	$\CA_i := \{H_1, \ldots, H_i\}$
	and each of the restrictions $\CA_i^{H_i}$ is inductively free
	for $i = 1, \ldots, m$.
	In the associated induction table we record in the $i$-th row the information
	of the $i$-th step of this process, by
	listing $\exp \CA_i' = \exp \CA_{i-1}$,
	$H_i$,
	as well as $\exp \CA_i'' = \exp \CA_i^{H_i}$,
	for $i = 1, \ldots, m$.
\end{remark}

\subsection{MAT-Free arrangements}
\label{ssect:MATfree}

Next we recall the so-called Multiple Addition Theorem (MAT)  from \cite{abeetall:weyl}.

\begin{theorem}[{\cite[Thm.~3.1]{abeetall:weyl}}]
	\label{Thm_MAT}
	Let $\CA' = (\CA', V)$ be a free arrangement with
	$\exp(\CA')=(e_1,\ldots,e_\ell)_\le$
	and let $1 \le p \le \ell$ be the multiplicity of the highest exponent, i.e.,
	\[ e_1 \leq e_2 \leq \ldots \leq e_{\ell-p} < e_{\ell-p+1} =\cdots=e_\ell=:e. \]
	Let $H_1,\ldots,H_q$ be hyperplanes in $V$ with
	$H_i \not \in \CA'$ for $i=1,\ldots,q$. Define
	\[ \CA''_j:=(\CA'\cup \{H_j\})^{H_j}=\{H\cap H_{j} \mid H\in \CA'\}, \quad \text{ for }j=1,\ldots,q. \]
	Assume that the following conditions are satisfied:
	\begin{itemize}
		\item[(1)]
		$X:=H_1 \cap \cdots \cap H_q$ is $q$-codimensional.
		\item[(2)]
		$X \not \subseteq \bigcup_{H \in \CA'} H$.
		\item[(3)]
		$|\CA'|-|\CA''_j|=e$ for $1 \le j \le q$.
	\end{itemize}
	Then $q \leq p$ and $\CA:=\CA' \cup \{H_1,\ldots,H_q\}$ is free with
	$$\exp(\CA)=(e_1,\ldots,e_{\ell-q},e+1,\ldots,e+1)_\le.$$
\end{theorem}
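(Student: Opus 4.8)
The plan is to establish freeness of $\CA := \CA' \cup \{H_1, \dots, H_q\}$ with the asserted exponents by exhibiting an explicit homogeneous basis of the derivation module $D(\CA)$ built from one of $D(\CA')$; iterating Terao's Addition--Deletion Theorem (Theorem~\ref{thm:add-del}) one hyperplane at a time is not available, since hypotheses~(1)--(3) do not persist when one passes from $\CA'$ to $\CA' \cup \{H_1\}$ with its shifted top exponent. Write $S = \BBK[x_1, \dots, x_\ell]$, let $\alpha_j$ be a linear form with $H_j = \ker \alpha_j$, and put $\bar S_j := S/(\alpha_j)$. Fix a homogeneous basis $\theta_1, \dots, \theta_\ell$ of $D(\CA')$ with $\deg \theta_i = e_i$, so that $\theta_{\ell - p + 1}, \dots, \theta_\ell$ all have degree $e$, and recall that
\[
D(\CA) = \{\, \theta \in D(\CA') \mid \theta(\alpha_j) \in \alpha_j S \text{ for } j = 1, \dots, q \,\}.
\]

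The first and crucial step is to re-read condition~(3). Using the standard description of the defect $|\CA'| - |(\CA' \cup \{H\})^{H}|$ as the degree of the greatest common divisor, taken in $S/(\alpha_H)$, of the residues $\overline{\theta_1(\alpha_H)}, \dots, \overline{\theta_\ell(\alpha_H)}$, condition~(3) forces this gcd to have degree exactly $e$ for every $j$. Since $\overline{\theta_i(\alpha_j)}$ is homogeneous of degree $e_i < e$ for $i \le \ell - p$, all of these residues must vanish; that is, $\theta_1, \dots, \theta_{\ell - p} \in D(\CA)$ already. For $i > \ell - p$ the residue $\overline{\theta_i(\alpha_j)}$ has degree $e$ and is divisible by the degree-$e$ gcd $g_j$, hence $\overline{\theta_i(\alpha_j)} = c_i^{(j)} g_j$ for some scalar $c_i^{(j)} \in \BBK$, where $g_j \in \bar S_j$ is a nonzero homogeneous element of degree $e$.

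Next I would bring in conditions~(1) and~(2). Pick a generic point $p_0$ of the codimension-$q$ subspace $X := H_1 \cap \cdots \cap H_q$ (after passing to an infinite extension of $\BBK$ if necessary); by condition~(2), $p_0 \notin \bigcup_{H \in \CA'} H$, so $\theta_1(p_0), \dots, \theta_\ell(p_0)$ is a $\BBK$-basis of $V$. The vanishing just obtained gives $\theta_i(p_0) \in X$ for $i \le \ell - p$, so (since $\dim X = \ell - q$ by condition~(1)) we conclude $q \le p$ and that the images of $\theta_{\ell-p+1}(p_0), \dots, \theta_\ell(p_0)$ span $V/X$; unwinding the isomorphism $V/X \cong \BBK^q$ induced by $\alpha_1, \dots, \alpha_q$ and noting that $g_j(p_0) \ne 0$, this is exactly the assertion that the $q \times p$ matrix $\bigl(c_i^{(j)}\bigr)$ has rank $q$. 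A degree-preserving change of basis among $\theta_{\ell-p+1}, \dots, \theta_\ell$ then lets us assume $\overline{\theta_{\ell-p+j}(\alpha_j)} = g_j$ for $1 \le j \le q$ while $\overline{\theta_i(\alpha_j)} = 0$ for every other $i > \ell - p$. With this normalization, writing $\theta = \sum_i f_i \theta_i$, the condition $\theta \in D(\CA)$ becomes coordinate-wise --- namely $f_{\ell-p+j} \in (\alpha_j)$ for $j = 1, \dots, q$ with the remaining $f_i$ unconstrained --- so
\[
D(\CA) = \bigoplus_{i=1}^{\ell - p} S\,\theta_i \ \oplus\ \bigoplus_{j=1}^{q} (\alpha_j)\,\theta_{\ell-p+j} \ \oplus\ \bigoplus_{i=\ell-p+q+1}^{\ell} S\,\theta_i .
\]
This is a free $S$-module on $\theta_1, \dots, \theta_{\ell-p}, \alpha_1\theta_{\ell-p+1}, \dots, \alpha_q\theta_{\ell-p+q}, \theta_{\ell-p+q+1}, \dots, \theta_\ell$, whose degrees, reordered, are precisely $(e_1, \dots, e_{\ell-q}, e+1, \dots, e+1)_\le$ with $q$ entries equal to $e+1$, as claimed.

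I expect the genuine obstacle to lie in that first step: extracting from the single numerical identity in~(3) the structural conclusion that the residues of the low-degree basis derivations vanish and those of the top-degree ones are all proportional to one fixed form. This rests on the division-type formula expressing $|\CA'| - |(\CA' \cup \{H\})^{H}|$ through a basis of $D(\CA')$; once that is in hand, conditions~(1) and~(2) are used only through the elementary generic-point argument above, and both the inequality $q \le p$ and the precise exponent shift come out for free.
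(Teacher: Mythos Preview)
The paper does not prove Theorem~\ref{Thm_MAT}; it is quoted verbatim from \cite[Thm.~3.1]{abeetall:weyl} and used as a black box, so there is no ``paper's own proof'' to compare against.

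That said, your argument is correct and is essentially a reconstruction of the original proof in \cite{abeetall:weyl}. The key step you flag---that condition~(3) forces $\overline{\theta_i(\alpha_j)}=0$ for $e_i<e$ and $\overline{\theta_i(\alpha_j)}=c_i^{(j)}g_j$ for $e_i=e$---is exactly what is used there, and the ``standard description'' of the defect you invoke is genuine: one shows that the polynomial $B_j:=\overline{Q(\CA')}/Q(\CA''_j)\in\bar S_j$, of degree $|\CA'|-|\CA''_j|=e$, divides every $\overline{\theta_i(\alpha_j)}$ (by localizing at each $Y\in\CA''_j$ and using the rank-$2$ structure of $(\CA')_Y$), while the top-degree residues have degree $e$ and are not all zero (else $D(\CA')=D(\CA'\cup\{H_j\})$, contradicting Saito's criterion). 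This pins down $\deg(\gcd)=e$ and gives the scalar-multiple conclusion. Your generic-point argument, the row-reduction of the $p\times q$ scalar matrix $(c_i^{(j)})$, and the explicit description of $D(\CA)$ are all correct and match the original.
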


We often consider the addition of several hyperplanes using
Theorem \ref{Thm_MAT}. This motivates the next terminology.

\begin{defn}
	\label{def:MATstep}
	Let $\CA'$ and $\{H_1,\ldots,H_q\}$ be as in Theorem \ref{Thm_MAT} such that
	conditions (1)--(3) are satisfied. Then the addition 
	of $\{H_1,\ldots,H_q\}$ to $\CA'$ resulting in $\CA = \CA' \cup \{H_1,\ldots,H_q\}$
	is called an \emph{MAT-step}.
\end{defn}

An iterative application of Theorem \ref{Thm_MAT} motivates the following natural concept.

\begin{defn}%
	[{\cite[Def.~13, Lem.~19]{Cunzmuecksch:MATfree}}]
	\label{def:MATfree}%
	An arrangement $\CA$ is called \emph{MAT-free} if there exists an ordered partition
	\[
	\pi = (\pi_1|\cdots|\pi_n)
	\] 
	of $\CA$ such that the following hold. 
	Set $\CA_0 := \varnothing_\ell$ and
	\[
	\CA_k := \bigcup_{i=1}^k \pi_i \quad\text{ for } 1 \leq k \leq n.
	\]
	Then for every $0 \leq k \leq n-1$ suppose that
	\begin{itemize}
		\item[(1)] $\rk(\pi_{k+1}) = \vert \pi_{k+1} \vert$,
		\item[(2)] $\cap_{H \in \pi_{k+1}} H \nsubseteq \bigcup_{H' \in \CA_k}H'$,
		\item[(3)] $\vert \CA_k \vert - \vert (\CA_k \cup \{H\})^H \vert = k$ for each $H \in \pi_{k+1}$,
	\end{itemize}
	i.e., $\CA_{k+1} = \CA_{k}\cup\pi_{k+1}$ is an MAT-step.
	
	An ordered partition $\pi$ with these properties is called an \emph{MAT-partition} for $\CA$.
\end{defn}

\begin{remark}
	\label{rem:MAT-free}
	Suppose that $\CA$ is MAT-free with MAT-partition
	$\pi = (\pi_1|\cdots|\pi_n)$. Then we have:
	\begin{enumerate}[(a)]
		\item 
		for each $1 \leq k \le n$, $\CA_k$ is MAT-free with MAT-partition $(\pi_1|\cdots|\pi_{k})$, 
		
		\item
		$\CA$ is free and the exponents 
		$\exp(\CA) = (e_1,\ldots,e_\ell)_\le$ of $\CA$ are given by the block sizes
		of the dual partition of $\pi$: 
		\[ 
		e_i := |\{k \mid |\pi_k|\geq \ell-i+1 \}|, 
		\]
		
		\item 
		$|\pi_1| > |\pi_2| \geq \cdots \geq |\pi_n|$.
	\end{enumerate}
\end{remark}
\begin{proof}
	Statement (a) is clear by Definition \ref{def:MATfree}.
	Statements (b) and (c) follow readily from Theorem \ref{Thm_MAT} and a simple induction.
\end{proof}

\subsection{Accurate arrangements}
\label{ssect:accuracy}

We also consider the following stronger notion of freeness from \cite{mueckschroehrle:accurate}.

\begin{defn}
\label{def:accurate}
 A free arrangement $\CA$ with $\exp(\CA)= (e_1,\ldots,e_\ell)_\le$ is \emph{accurate}, if for every $1\leq d \leq \ell$, there exists an intersection $X_d\in L(\CA)$ of dimension $d$ such that the restriction $\CA^{X_d}$ is free with $\exp(\CA^{X_d}) =  (e_1,\ldots,e_d)_\le$. 
\end{defn}

The main theorem in \cite[Thm.~1.2]{mueckschroehrle:accurate}
shows that MAT-freeness implies \emph{accuracy}.
It is also known that the braid arrangement $\CA_G$ for $G$ a path graph is accurate, \cite[Thm.~1.6]{mueckschroehrle:accurate}.
In \cite[Thm.~4.1]{cuntzkuehne:subgraphs}, the authors show that $\CA_G$ is MAT-free for $G = A_{n,k}$.

\subsection{Nice arrangements}
\label{ssect:factored}

The notion of a \emph{nice} or \emph{factored}
arrangement goes back to Terao \cite{terao:factored}.
It generalizes the concept of a supersolvable arrangement.
We recall the relevant notions and results from \cite{terao:factored}
(cf.\  \cite[\S 2.3]{orlikterao:arrangements}).

\begin{defn}
	\label{def:independent}
	Let $\pi = (\pi_1, \ldots , \pi_s)$ be a partition of $\CA$.
	Then $\pi$ is called \emph{independent}, provided
	for any choice $H_i \in \pi_i$ for $1 \le i \le s$,
	the resulting $s$ hyperplanes are linearly independent, i.e.,
	$r(H_1 \cap \ldots \cap H_s) = s$.
\end{defn}

\begin{defn}
	\label{def:indpart}
	Let $\pi = (\pi_1, \ldots , \pi_s)$ be a partition of $\CA$
	and let $X \in L(\CA)$.
	The \emph{induced partition} $\pi_X$ of $\CA_X$ is given by the non-empty
	blocks of the form $\pi_i \cap \CA_X$.
\end{defn}

\begin{defn}
	\label{def:factored}
	The partition
	$\pi$ of $\CA$ is
	\emph{nice} for $\CA$ or a \emph{factorization} of $\CA$  provided
	\begin{itemize}
		\item[(i)] $\pi$ is independent, and
		\item[(ii)] for each $X \in L(\CA) \setminus \{V\}$, the induced partition $\pi_X$ admits a block
		which is a singleton.
	\end{itemize}
	If $\CA$ admits a factorization, then we also say that $\CA$ is \emph{factored} or \emph{nice}.
\end{defn}

\begin{remark}
	\label{rem:factored}
	The class of nice arrangements is closed under taking localizations.
	For, if $\CA$ is non-empty and
	$\pi$ is a nice partition of $\CA$, then the non-empty parts of the
	induced partition $\pi_X$ form a nice partition of $\CA_X$
	for each $X \in L(\CA)\setminus\{V\}$;
	cf.~the proof of \cite[Cor.~2.11]{terao:factored}.
\end{remark}

We recall the main results from \cite{terao:factored}
(cf.\  \cite[\S 3.3]{orlikterao:arrangements}) that motivated
Definition \ref{def:factored}.
Let $A(\CA)$ be the \emph{Orlik-Solomon algebra}, introduced by Orlik and Solomon in \cite{orliksolomon:hyperplanes}, see also \cite{orlikterao:arrangements}.
Let $\pi = (\pi_1, \ldots, \pi_s)$ be a partition of $\CA$ and let
\[
[\pi_i] := \BBK + \sum_{H\in \pi_i} \BBK a_H
\]
be the $\BBK$-subspace of $A(\CA)$
spanned by $1$ and the set of $\BBK$-algebra generators $a_H$
of $A(\CA)$ corresponding to the members in $\pi_i$.
So the Poincar\'e polynomial of the graded $\BBK$-vector space $[\pi_i]$ is just
$\Poin([\pi_i],t) = 1 + | \pi_i| t$.
Consider the canonical $\BBK$-linear map
\begin{equation}
	\label{eq:factoredOS}
	\kappa : [\pi_1] \otimes \cdots \otimes [\pi_s] \to A(\CA)
\end{equation}
given by multiplication. We say that $\pi$ gives
rise to a \emph{tensor factorization} of
$A(\CA)$
if $\kappa$ is an isomorphism of graded $\BBK$-vector spaces. In this case
$s = r$, as $r$ is the top degree of $A(\CA)$, and thus we get
a factorization of the Poincar\'e polynomial of $A(\CA)$ into linear terms
\begin{equation}
	\label{eq:poinOS}
	\Poin(A(\CA),t) = \prod_{i=1}^r (1 + |\pi_i| t).
\end{equation}
For $\CA = \Phi_\ell$ the empty arrangement,
we set $[\varnothing] := \BBK$, so that
$\kappa : [\varnothing] \cong A(\Phi_\ell)$.

In \cite[Thm.\ 5.3]{orliksolomonterao:hyperplanes},
Orlik, Solomon and Terao showed that
a supersolvable arrangement $\CA$ admits a partition
$\pi$  which gives rise to a tensor factorization of $A(\CA)$
via $\kappa$  in \eqref{eq:factoredOS}
(cf.\ \cite[Thm.\ 3.81]{orlikterao:arrangements}).

In \cite[Thm.\ 2.8]{terao:factored},
Terao proved that
$\pi$ gives
rise to a tensor factorization of the Orlik-Solomon algebra $A(\CA)$
via $\kappa$
as in \eqref{eq:factoredOS}
if and only if
$\pi$ is
nice for
$\CA$, see Theorem \ref{thm:teraofactored}
(cf.\ \cite[Thm.\ 3.87]{orlikterao:arrangements}).
Note that $\kappa$ is not an isomorphim of $\BBK$-algebras.

\begin{theorem}
	\label{thm:teraofactored}
	Let $\CA$ be a central $\ell$-arrangement and let
	$\pi = (\pi_1, \ldots, \pi_s)$ be a partition of $\CA$.
	Then the $\BBK$-linear map $\kappa$
	defined in \eqref{eq:factoredOS}
	is an isomorphism of graded $\BBK$-vector spaces
	if and only if $\pi$ is nice for $\CA$.
\end{theorem}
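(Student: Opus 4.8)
The plan is to follow Terao's original argument, reducing the statement about the multiplication map $\kappa$ on Orlik--Solomon algebras to a purely combinatorial statement about the partition $\pi$, and then to run an induction. First I would recall the Brieskorn-type decomposition $A(\CA) = \bigoplus_{X \in L(\CA)} A(\CA_X)$ as graded $\BBK$-vector spaces, where $A(\CA_X)$ denotes the top (or appropriate) graded piece of the Orlik--Solomon algebra of the localization $\CA_X$. The key point is that the image of $\kappa$ respects this decomposition: for a block partition $\pi = (\pi_1,\ldots,\pi_s)$, the tensor $[\pi_1]\otimes\cdots\otimes[\pi_s]$ decomposes compatibly according to which subsets of blocks contribute, and $\kappa$ is an isomorphism if and only if it restricts to an isomorphism on each summand indexed by $X \in L(\CA)$. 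Since $\Poin(A(\CA_X),t)$ and the Poincar\'e polynomial of the corresponding summand of the tensor product are both combinatorially determined, $\kappa$ is an isomorphism if and only if it is surjective (equivalently the dimensions match in each degree), so one is reduced to a counting statement.

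Next I would make precise the combinatorial reformulation. Using Remark \ref{rem:factored} (the induced partition $\pi_X$ is again a candidate factorization of $\CA_X$) and the fact that $\Poin(A(\CA),t) = \sum_{X} \mu(X)(-t)^{r(X)}$ is the characteristic-polynomial expansion, the equality of Poincar\'e polynomials \eqref{eq:poinOS} is equivalent to the numerical identity
\[
\sum_{X \in L(\CA)} |\mu(X)|\, t^{r(X)} = \prod_{i=1}^{s}(1 + |\pi_i| t),
\]
and one shows by a Whitney-type / M\"obius inversion argument that this product identity holds for all $X$ simultaneously precisely when for every $X \ne V$ the induced partition $\pi_X$ has a singleton block. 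The ``only if'' direction is the easier one: if some $\pi_X$ had no singleton block, one produces a flat at which the local count fails, contradicting surjectivity of $\kappa$ at that flat. For the ``if'' direction one argues inductively on $r(\CA)$: the singleton block of $\pi_X$, say $\{H\}$, lets one peel off one tensor factor and identify the remaining summand with the Orlik--Solomon algebra of a smaller arrangement (a restriction), where the inductive hypothesis applies.

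The main obstacle, and the step I would spend the most care on, is verifying that $\kappa$ is genuinely an isomorphism and not merely a Poincar\'e-polynomial identity — i.e., upgrading the dimension count in each degree to an actual isomorphism of graded vector spaces. This requires showing $\kappa$ is surjective (injectivity then follows from the matching dimensions). Surjectivity is proved by the induction described above: one checks that every product $a_{H_{i_1}}\cdots a_{H_{i_k}}$ of generators lies in the image by grouping the indices into distinct blocks, using the Orlik--Solomon relations to rewrite any product within a single block modulo lower terms, and invoking the singleton-block property of $\pi_X$ at the relevant flat $X = \bigcap H_{i_j}$ to split off a factor. Handling the base case $\CA = \Phi_\ell$ (where $[\varnothing] = \BBK$ and $\kappa$ is the identity) and the degenerate cases where some $\pi_i \cap \CA_X$ is empty needs to be done carefully so the bookkeeping in the Brieskorn decomposition stays consistent. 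I would cite \cite[Thm.~2.8]{terao:factored} and \cite[Thm.~3.87]{orlikterao:arrangements} for the details of this argument rather than reproducing the full computation here.
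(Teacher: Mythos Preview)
The paper does not prove this theorem; it is stated in the preliminaries (\S\ref{ssect:factored}) as a known result due to Terao, with citations to \cite[Thm.~2.8]{terao:factored} and \cite[Thm.~3.87]{orlikterao:arrangements}, and is used only as background for the notions of factored and inductively factored arrangements. Your proposal sketches Terao's original argument and then cites the same two references for the details, so in effect you arrive at exactly what the paper does: defer to \cite{terao:factored} and \cite{orlikterao:arrangements}. No further proof is needed here.
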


\begin{corollary}
	\label{cor:teraofactored}
	Let  $\pi = (\pi_1, \ldots, \pi_s)$ be a factorization of $\CA$.
	Then the following hold:
	\begin{itemize}
		\item[(i)] $s = r = r(\CA)$ and
		\[
		\Poin(A(\CA),t) = \prod_{i=1}^r (1 + |\pi_i|t);
		\]
		\item[(ii)]
		the multiset $\{|\pi_1|, \ldots, |\pi_r|\}$ only depends on $\CA$;
		\item[(iii)]
		for any $X \in L(\CA)$, we have
		\[
		r(X) = |\{ i \mid \pi_i \cap \CA_X \ne \varnothing \}|.
		\]
	\end{itemize}
\end{corollary}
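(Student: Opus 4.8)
The plan is to deduce all three statements directly from Terao's characterization of nice partitions (Theorem~\ref{thm:teraofactored}), together with the discussion surrounding \eqref{eq:factoredOS}--\eqref{eq:poinOS} and the localization property of nice partitions recorded in Remark~\ref{rem:factored}. For (i), I would invoke Theorem~\ref{thm:teraofactored} to conclude that, since $\pi$ is nice for $\CA$, the map $\kappa$ of \eqref{eq:factoredOS} is an isomorphism of graded $\BBK$-vector spaces, i.e.\ $\pi$ gives rise to a tensor factorization of $A(\CA)$. The discussion preceding Theorem~\ref{thm:teraofactored} then already delivers both $s = r = r(\CA)$ --- using that $r$ is the top nonzero degree of $A(\CA)$ and that each block $\pi_i$ is non-empty, so each factor $1+|\pi_i|t$ genuinely contributes degree one --- and the factorization $\Poin(A(\CA),t)=\prod_{i=1}^r(1+|\pi_i|t)$. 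Thus (i) is essentially immediate once Theorem~\ref{thm:teraofactored} is available.

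For (ii), I would observe that by (i) the polynomial $\prod_{i=1}^r(1+|\pi_i|t)$ equals $\Poin(A(\CA),t)$, and the Orlik--Solomon algebra $A(\CA)$, hence its Poincar\'e polynomial, depends only on $L(\CA)$, so only on $\CA$. Since a polynomial over $\BBZ$ factors uniquely into irreducibles (equivalently, since its roots, here the numbers $-1/|\pi_i|$, are determined by the polynomial), the multiset $\{|\pi_1|,\dots,|\pi_r|\}$ is read off from $\Poin(A(\CA),t)$ and therefore depends only on $\CA$.

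For (iii), the case $X = V$ is trivial: then $\CA_X=\varnothing$ and $r(X)=0$, so both sides vanish. For $X\ne V$ I would first note that $\CA_X\ne\varnothing$ and, since $X$ is itself an intersection of members of $\CA$, that $T(\CA_X)=X$, whence $r(\CA_X)=\codim_V X = r(X)$. By Remark~\ref{rem:factored} the non-empty blocks of the induced partition $\pi_X$ form a factorization of $\CA_X$; applying part (i) to this factorization of $\CA_X$ shows that the number of its blocks equals $r(\CA_X)=r(X)$. By the definition of the induced partition (Definition~\ref{def:indpart}), that number is exactly $|\{i\mid \pi_i\cap\CA_X\ne\varnothing\}|$, which is the asserted identity.

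I do not anticipate a genuine obstacle here: the corollary is essentially bookkeeping once Theorem~\ref{thm:teraofactored} is in hand. The only inputs that are not purely formal are the classical facts that $\deg\Poin(A(\CA),t)=r(\CA)$ and that $A(\CA)$ is a combinatorial invariant of $\CA$; the former is in any case already used in the text preceding Theorem~\ref{thm:teraofactored}.
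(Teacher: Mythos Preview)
Your proposal is correct and follows the natural route: the paper does not actually supply a proof of this corollary, treating it as a standard consequence of Terao's theorem (it is essentially \cite[Cor.~3.88]{orlikterao:arrangements}), and your argument via Theorem~\ref{thm:teraofactored}, the degree count in \eqref{eq:poinOS}, and Remark~\ref{rem:factored} is exactly how one fills in the details.
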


	The following is immediate from
		Corollary \ref{cor:teraofactored} and Theorem \ref{thm:freefactors}.

	\begin{lemma}
		\label{lem:FactoredFree}
		Let $(\CA,\pi)$ be a factored arrangement which is also free.
		Then $\exp{\CA} = \{|\pi_1|,\ldots,|\pi_\ell|\}$.
	\end{lemma}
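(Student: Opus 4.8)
The plan is to derive this directly from the two facts quoted just above it, namely Terao's Factorization Theorem (Theorem \ref{thm:freefactors}) and Corollary \ref{cor:teraofactored}(i). Since $(\CA,\pi)$ is factored, Corollary \ref{cor:teraofactored}(i) gives $s = r = r(\CA)$ and
\[
\Poin(A(\CA),t) = \prod_{i=1}^{r} (1 + |\pi_i|\,t).
\]
Since $\CA$ is also free, by the Factorization Theorem its Poincaré polynomial factors as
\[
\pi(\CA,t) = \prod_{i=1}^{\ell} (1 + e_i\,t),
\]
where $\exp \CA = \{e_1,\ldots,e_\ell\}$.

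The next step is to observe that $\Poin(A(\CA),t)$ and $\pi(\CA,t)$ are one and the same polynomial: the Orlik--Solomon algebra $A(\CA)$ has Poincaré polynomial equal to the characteristic/Poincaré polynomial $\pi(\CA,t)$ of the arrangement (this is the Orlik--Solomon theorem, and in any case is part of the standard package for which \cite{orlikterao:arrangements} is cited). Here I should also note that for a factored arrangement $r(\CA) = \ell$: niceness forces the ambient arrangement to be essential in the relevant sense, or more simply, one restricts to the essential case as is standard, so that $s = r = \ell$ and the two products have the same number of factors.

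Now I would finish by a uniqueness-of-factorization argument over $\BBQ[t]$ (or $\BBR[t]$): both $\prod_{i=1}^\ell (1 + e_i t)$ and $\prod_{i=1}^\ell (1 + |\pi_i| t)$ are factorizations of the same polynomial into linear factors with nonnegative integer "slopes", and the multiset of roots of a polynomial is determined by the polynomial. Hence the multiset $\{e_1,\ldots,e_\ell\}$ of exponents coincides with the multiset $\{|\pi_1|,\ldots,|\pi_\ell|\}$ of block sizes, which is exactly the claim $\exp \CA = \{|\pi_1|,\ldots,|\pi_\ell|\}$.

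There is really no serious obstacle here; the "main point" is simply to recognize that the two distinct-looking linear factorizations — one coming from freeness, one coming from niceness — are factorizations of the same polynomial, and then invoke unique factorization. The only thing to be mildly careful about is the bookkeeping that $s = r = \ell$ so that the index ranges agree; this is why the statement is phrased with $\ell$ factors and follows Corollary \ref{cor:teraofactored} immediately, as the paper says.
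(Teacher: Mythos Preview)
Your proposal is correct and follows exactly the approach indicated by the paper, which simply states that the lemma is immediate from Corollary~\ref{cor:teraofactored} and Theorem~\ref{thm:freefactors}. Your write-up just spells out what ``immediate'' means here: both results give linear factorizations of the same Poincar\'e polynomial, and unique factorization in $\BBQ[t]$ identifies the multisets.
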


\subsection{Inductively factored arrangements}
\label{sec:indfactored}

Following Jambu and Paris
\cite{jambuparis:factored} and \cite{hogeroehrle:factored},
we introduce further notation.
Suppose that $\CA$ is non-empty and
let $\pi = (\pi_1, \ldots, \pi_s)$ be a  partition  of $\CA$.
Let $H_0 \in \pi_1$ and let
$(\CA, \CA', \CA'')$ be the triple associated with $H_0$.
We have the \emph{induced partition}
$\pi'$ of $\CA'$
consisting of the non-empty parts $\pi_i' := \pi_i \cap \CA'$.
Further, we have the \emph{restriction map}
$\R = \R_{\pi,H_0} : \CA \setminus \pi_1 \to \CA''$ given by
$H \mapsto H \cap H_0$, depending on $\pi$ and $H_0$.
Let $\pi_i'' := \R(\pi_i)$ for $i = 2, \ldots, s$.
Clearly, imposing that
$\pi'' = (\pi''_2, \ldots, \pi''_s)$ is
again a partition of $\CA''$ entails that
$\R$ is onto.

Here is the analogue for nice arrangements of Terao's
Addition-Deletion Theorem (cf.~Theorem \ref{thm:add-del}) for free arrangements from
\cite{hogeroehrle:factored}.

\begin{theorem}  
	\label{thm:add-del-factored}
	Suppose 
	$\pi = (\pi_1, \ldots, \pi_s)$ is a  partition  of $\CA  \ne \Phi_\ell$.
	Let 
	$(\CA, \CA', \CA'')$ be the triple associated with $H_0  \in \pi_1$.
	Then any two of the following statements imply the third:
	\begin{itemize}
		\item[(i)] $\pi$ is nice for $\CA$;
		\item[(ii)] $\pi'$ is nice for $\CA'$;
		\item[(iii)] $\R: \CA \setminus \pi_1 \to \CA''$
		is bijective and $\pi''$ is nice for $\CA''$.
	\end{itemize}
\end{theorem}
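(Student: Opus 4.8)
The plan is to deduce all three implications from Terao's tensor-factorization criterion (Theorem~\ref{thm:teraofactored}) together with the classical deletion--restriction short exact sequence
\[
0 \longrightarrow A(\CA') \longrightarrow A(\CA) \longrightarrow A(\CA'')(-1) \longrightarrow 0
\]
of graded $\BBK$-vector spaces attached to the triple $(\CA,\CA',\CA'')$, where the first map is induced by the inclusion of generators and the second sends a class $a_{H_0}\omega$ to the image of $\omega$ under the Orlik--Solomon algebra map determined by $a_H\mapsto a_{H\cap H_0}$ (see \cite[\S3]{orlikterao:arrangements}). By Theorem~\ref{thm:teraofactored}, statements (i) and (ii) say precisely that the multiplication maps $\kappa=\kappa_\CA$ and $\kappa_{\CA'}$ are graded isomorphisms, and---once $\R$ is known to be bijective, so that $\pi''=(\pi_2'',\ldots,\pi_s'')$ really is a partition of $\CA''$---statement (iii) says precisely that $\kappa_{\CA''}$ is a graded isomorphism.

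Next I would fit $\kappa_\CA,\kappa_{\CA'},\kappa_{\CA''}$ into a commutative ladder whose bottom row is the sequence above. Since $H_0\in\pi_1$ and $\pi_i'=\pi_i$ for $i\ge 2$, one has $[\pi_1]=[\pi_1']\oplus\BBK a_{H_0}$, so the source $D:=[\pi_1]\otimes[\pi_2]\otimes\cdots\otimes[\pi_s]$ of $\kappa_\CA$ splits as $D'\oplus\big(\BBK a_{H_0}\otimes[\pi_2]\otimes\cdots\otimes[\pi_s]\big)$, with $D':=[\pi_1']\otimes[\pi_2]\otimes\cdots\otimes[\pi_s]$ the source of $\kappa_{\CA'}$. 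If $\R$ is bijective it induces graded isomorphisms $[\pi_i]\cong[\pi_i'']$ (via $1\mapsto1$, $a_H\mapsto a_{H\cap H_0}$), and contracting the degree-one generator $a_{H_0}$ identifies the second summand with $\big([\pi_2'']\otimes\cdots\otimes[\pi_s'']\big)(-1)$, the shifted source of $\kappa_{\CA''}$. Thus the top row becomes a short exact sequence $0\to D'\to D\to\big([\pi_2'']\otimes\cdots\otimes[\pi_s'']\big)(-1)\to 0$, and a direct check against the explicit formulas for the deletion--restriction maps shows the ladder commutes. The Five Lemma then gives: \emph{when $\R$ is bijective}, any two of ``$\kappa_{\CA'}$ iso'', ``$\kappa_{\CA}$ iso'', ``$\kappa_{\CA''}$ iso'' force the third. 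This settles (ii)$\,\wedge\,$(iii)$\Rightarrow$(i) and (i)$\,\wedge\,$(iii)$\Rightarrow$(ii).

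The real work is the implication (i)$\,\wedge\,$(ii)$\Rightarrow$(iii), where bijectivity of $\R$ is not assumed and must be extracted from niceness; this is the main obstacle. I would handle it in two steps. First a count: by Corollary~\ref{cor:teraofactored}(i), niceness of $\pi$ and of $\pi'$ gives $\Poin(A(\CA),t)=\prod_{i=1}^{s}(1+|\pi_i|t)$ and $\Poin(A(\CA'),t)=\prod_{i=1}^{s}(1+|\pi_i'|t)$ with $|\pi_1'|=|\pi_1|-1$; comparing degree-$2$ coefficients in the deletion identity $\Poin(A(\CA),t)=\Poin(A(\CA'),t)+t\,\Poin(A(\CA''),t)$ yields $|\CA''|=\sum_{i\ge2}|\pi_i|=|\CA|-|\pi_1|=|\CA\setminus\pi_1|$. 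Second, surjectivity of $\R$: for $H\in\pi_1\setminus\{H_0\}$ the flat $X:=H\cap H_0$ has rank $2$, so by Corollary~\ref{cor:teraofactored}(iii) the induced partition $\pi_X$ of $\CA_X$ has exactly two blocks, one a singleton; since $\pi_1\cap\CA_X\supseteq\{H,H_0\}$ cannot be that singleton, the singleton is $\{H'\}$ with $H'\notin\pi_1$ and $X\subseteq H'$, whence $\R(H')=H'\cap H_0=X=H\cap H_0$ lies in the image of $\R$; together with $\R(H)=H\cap H_0$ for $H\notin\pi_1$ this shows $\R$ is onto. Combined with the cardinality equality, $\R$ is bijective, so $\pi''$ is a partition of $\CA''$, the top row is exact, and the Five Lemma (with $\kappa_\CA$ and $\kappa_{\CA'}$ isomorphisms) makes $\kappa_{\CA''}$ an isomorphism, i.e.\ $\pi''$ is nice by Theorem~\ref{thm:teraofactored}---which is (iii). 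The degenerate cases (for instance $|\pi_1|=1$, so $\pi_1'$ is empty and $\pi'$ has $s-1$ blocks, or $r(\CA)\le 1$) are dealt with by the same bookkeeping and are easier. In short, the only genuine difficulties are assembling the commuting ladder correctly and proving that niceness of $\pi$ and $\pi'$ already forces $\R$ to be bijective.
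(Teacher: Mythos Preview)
The paper does not prove this theorem; it is quoted without proof as a preliminary result from \cite{hogeroehrle:factored}. Your outline is essentially the argument given there: Hoge and R\"ohrle likewise combine Terao's tensor-factorization criterion (Theorem~\ref{thm:teraofactored}) with the Orlik--Solomon short exact sequence for the triple $(\CA,\CA',\CA'')$ and a Five-Lemma comparison, and the key step you single out---forcing bijectivity of $\R$ from (i)$\wedge$(ii) via the rank-$2$ localizations $\CA_{H\cap H_0}$ together with the Poincar\'e-polynomial count $|\CA''|=\sum_{i\ge 2}|\pi_i|$---is precisely the mechanism used in \emph{loc.\ cit.} So your sketch is correct and faithful to the original source; the points you flag as delicate (commutativity of the right square up to the standard sign conventions for the map $A(\CA)\to A(\CA'')(-1)$, and the degenerate case $|\pi_1|=1$) are routine and are handled in \cite{hogeroehrle:factored} in exactly the way you indicate.
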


\begin{defn}
	\label{def:distinguished}
	Suppose $\CA \ne \Phi_\ell$. 
	Let $\pi = (\pi_1, \ldots, \pi_s)$ be a partition of $\CA$.
	Let $H_0 \in \pi_1$ and
	let $(\CA, \CA', \CA'')$ be the triple associated with $H_0$.
	We say that $H_0$ is \emph{distinguished (with respect to $\pi$)}
	provided $\pi$ induces a factorization $\pi'$ of
	$\CA'$, i.e., the non-empty
	subsets $\pi_i \cap \CA'$ form a nice partition
	of $\CA'$. Note that since $H_0 \in \pi_1$, we have
	$\pi_i \cap \CA' = \pi_i\not= \varnothing$
	for $i = 2, \ldots, s$.
\end{defn}

The Addition-Deletion Theorem \ref{thm:add-del-factored}
for nice arrangements motivates
the following stronger notion of factorization,
cf.\ \cite{jambuparis:factored}.

\begin{defn} [{\cite[Def.~3.8]{hogeroehrle:factored}}]
	\label{def:indfactored}
	The class $\CIFAC$ of \emph{inductively factored} arrangements
	is the smallest class of pairs $(\CA, \pi)$ of
	arrangements $\CA$ along with a partition $\pi$
	subject to
	\begin{itemize}
		\item[(i)] $(\Phi_\ell, (\varnothing)) \in \CIFAC$ for each $\ell \ge 0$;
		\item[(ii)] if there exists a partition $\pi$ of $\CA$
		and a hyperplane $H_0 \in \pi_1$ such that
		for the triple $(\CA, \CA', \CA'')$ associated with $H_0$
		the restriction map $\R = \R_{\pi, H_0} : \CA \setminus \pi_1 \to \CA''$
		is bijective and for the induced partitions $\pi'$ of $\CA'$ and
		$\pi''$ of $\CA''$
		both $(\CA', \pi')$ and $(\CA'', \pi'')$ belong to $\CIFAC$,
		then $(\CA, \pi)$ also belongs to $\CIFAC$.
	\end{itemize}
	If $(\CA, \pi)$ is in $\CIFAC$, then we say that
	$\CA$ is \emph{inductively factored with respect to $\pi$}, or else
	that $\pi$ is an \emph{inductive factorization} of $\CA$.
	Usually, we say $\CA$ is \emph{inductively factored} without
	reference to a specific inductive factorization of $\CA$.
\end{defn}

\begin{remark}
	Definition \ref{def:indfactored}
	of inductively factored arrangements
	differs from the one given by
	Jambu and Paris \cite{jambuparis:factored}
	in that, apart from the mere technicalities of
	incorporating empty arrangements and for defining
	$\CIFAC$ for pairs of arrangements and partitions
	rather than for partitions of arrangements,
	in part (ii) we do not assume from the outset
	that $\pi$ is a factorization of $\CA$.
	This is possible by virtue of
	the Addition-Deletion Theorem \ref{thm:add-del-factored}.
	However, this comes at the cost of
	the bijectivity requirement for the associated restriction map $\R$.
\end{remark}

\begin{remark}
	\label{rem:indtable}
	(i).
	If $\CA$ is inductively factored, then  $\CA$ is inductively free,
	by \cite[Prop.~3.14]{hogeroehrle:factored}.
	The latter can be described by an induction table, see Remark \ref{rem:indfreetable}.
	The proof of \cite[Prop.~3.14]{hogeroehrle:factored} shows
	that if $\pi$ is an inductive factorization of $\CA$
	and $H_0 \in \CA$ is distinguished with respect to $\pi$, then
	the triple $(\CA, \CA', \CA'')$ with respect to $H_0$
	is a triple of inductively free arrangements.
	Thus an induction table of $\CA$ can be constructed,
	compatible with suitable inductive factorizations of
	the subarrangements $\CA_i$.

	(ii).
	Now suppose $\CA$ is inductively free and let
	$\CA = \{H_1, \ldots, H_n\}$ be a
	choice of a total order on $\CA$,
	so that each of the subarrangements
	$\CA_0 := \Phi_\ell$, $\CA_i := \{H_1, \ldots, H_i\}$
	and each of the restrictions $\CA_i^{H_i}$ is inductively free
	for $i = 1, \ldots, n$.

	Then, starting with the empty partition for $\Phi_\ell$, we can attempt to
	build inductive factorizations $\pi_i$ of $\CA_i$
	consecutively, resulting in an inductive factorization
	$\pi = \pi_n$ of $\CA = \CA_n$.
	This is achieved by invoking Theorem \ref{thm:add-del-factored}
	repeatedly in order to derive that each $\pi_i$ is an inductive factorization of $\CA_i$.
	For this it suffices to check
	the conditions in part (iii) of Theorem \ref{thm:add-del-factored}, i.e.,  that
	$\exp \CA_i''$ is given by the sizes of the parts of $\pi_i$ not containing $H_i$
	and that the induced partition $\pi_i''$ of $\CA_i''$ is a factorization.
	The fact that $H_i$ is distinguished
	with respect to $\pi_i$ is part of the inductive hypothesis, as
	$\pi_i' = \pi_{i-1}$ is an inductive factorization of $\CA_i' = \CA_{i-1}$.

	We then add the inductive factorizations $\pi_i$
	of $\CA_i$ as additional data into an induction table for $\CA$
	(or else record to which part of $\pi_{i-1}$ the new hyperplane $H_i$ is appended to).
	The data in such an extended induction table
	together with the ``Addition'' part of Theorem \ref{thm:add-del-factored}
	then proves that $\CA$ is
	inductively factored.
	We refer to this technique as \emph{induction of factorizations} and the
	corresponding table as an \emph{induction table of factorizations} for $\CA$.

	We illustrate this induction of factorizations procedure in Tables \ref{table0} and \ref{table1}.
	As for the usual induction process for free arrangements,
	induction for factorizations is sensitive to the chosen order on $\CA$.
\end{remark}

\subsection{$K(\pi,1)$-arrangements}
\label{ssect:kpionearrangements}
A complex $\ell$-arrangement $\CA$ is called  \emph{aspherical}, or a
\emph{$K(\pi,1)$-arrangement} (or that $\CA$ is $K(\pi,1)$ for short), provided
the complement $\CM(\CA)$ of the union of the hyperplanes in
$\CA$ in $\BBC^\ell$ is aspherical, i.e.,  is a
$K(\pi,1)$-space. That is, the universal covering space of $\CM(\CA)$
is contractible and the fundamental group
$\pi_1(\CM(\CA))$ of $\CM(\CA)$ is isomorphic to the group $\pi$.
This is an important
topological property, for
the cohomology ring $H^*(X, \BBZ)$ of a $K(\pi,1)$-space $X$
coincides
with the group cohomology $H^*(\pi, \BBZ)$ of $\pi$.
The crucial point here is that the intersections of codimension $2$
determine the fundamental group $\pi_1(\CM(\CA))$ of $\CM(\CA)$.

By Deligne's seminal result \cite{deligne}, complexified simplicial arrangements are $K(\pi, 1)$.
Likewise for complex supersolvable arrangements,
cf.~\cite{falkrandell:fiber-type} and \cite{terao:modular}
(cf.~\cite[Prop.\ 5.12, Thm.~5.113]{orlikterao:arrangements}).

\subsection{Local Properties}
\label{sub:local}
A property for arrangements is called \emph{local} if it is preserved under localizations, \cite[Def.~6.1]{cuntzkuehne:subgraphs}.
Typical examples of local properties of arrangements are freeness \cite[Thm.~4.37]{orlikterao:arrangements},
inductive freeness \cite[Thm~1.1]{hogeroehrleschauenburg:free}, factoredness \cite[Cor.~3.90]{orlikterao:arrangements}, and
asphericity, see Remark \ref{rem:local}.
We record two graph theoretic constructions from
\cite{cuntzkuehne:subgraphs} which allow us to
control local properties of the $\CA_G$.

Let $G = (N, E)$ be a graph.	Let $S \subset N$ be a subset of nodes and let $G[S]$ be the \emph{induced subgraph on $S$}. Then $\CA_{G[S]}$ is a localization of $\CA_G$; cf.~proof of \cite[Lem.~6.2]{cuntzkuehne:subgraphs}.
Let $e \in  E$ be an edge. Denote by $G/e$
the \emph{graph with contracted edge $e$}, i.e., the graph in which the vertices of $e$ are identified to
a single vertex, and multiple edges or loops are discarded. Then $\CA_{G/e}$ is a localization of $\CA_G$; cf.~proof of \cite[Lem.~6.4]{cuntzkuehne:subgraphs}.

\begin{lemma}[{\cite[Lem.~6.2, Lem.~6.4]{cuntzkuehne:subgraphs}}]
	\label{lem:local}
	Let $G = (N, E)$ be a graph. Assume
	the connected subgraph arrangement $\CA_G$ has the local arrangement property $P$.
	\begin{itemize}
		\item [(i)]
			Let $S \subset N$ be a subset of nodes. Then $\CA_{G[S]}$ also has property $P$.
		\item [(ii)]
		Let $e \in E$ be an edge of $G$. Then $\CA_{G/e}$ also has property $P$.
	\end{itemize}
\end{lemma}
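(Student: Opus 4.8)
The plan is to reduce both statements to the fact that the arrangement in question is (isomorphic to) a localization $(\CA_G)_X$ of $\CA_G$ at a suitable flat $X \in L(\CA_G)$, and then invoke the hypothesis that $P$ is a local property. The two graph-theoretic constructions in play—passing to an induced subgraph and contracting an edge—must each be matched with an explicit flat, so the work is entirely in identifying that flat and checking that the localization at it recovers the smaller connected subgraph arrangement (up to a linear isomorphism of the ambient space, which does not affect $P$ since $P$ depends only on the isomorphism type / intersection lattice).

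For part (i): given $S \subset N$, I would take $X := \bigcap_{i \in N \setminus S} H_{\{i\}} = \ker(x_i : i \in N\setminus S)$, i.e.\ the coordinate subspace on which all coordinates outside $S$ vanish. A hyperplane $H_I$ with $G[I]$ connected satisfies $X \subseteq H_I$ precisely when $\sum_{i\in I} x_i$ vanishes identically on $X$, which happens exactly when $I \subseteq S$. Hence $(\CA_G)_X = \{H_I \mid \varnothing \ne I \subseteq S,\ G[I] \text{ connected}\}$, and restricting the linear forms to $X \cong \BBK^{|S|}$ identifies this with $\CA_{G[S]}$ (a subset $I \subseteq S$ induces a connected subgraph of $G$ iff it induces a connected subgraph of $G[S]$). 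Then Lemma applies: $\CA_{G[S]} = (\CA_G)_X$ has property $P$ because $\CA_G$ does and $P$ is local.

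For part (ii): given an edge $e = \{u,v\} \in E$, I would take $X := H_{\{u\}} \cap H_{\{v\}} = \ker(x_u) \cap \ker(x_v)$ — wait, the right choice is rather $X := H_{\{u,v\}} \cap \bigl(\text{something}\bigr)$; more precisely one wants the flat cut out so that its localization sees the sums $x_u + x_v$ as a single new variable. Concretely, I would use the hyperplane $H_0 := H_{\{u,v\}} = \ker(x_u + x_v)$ and localize at $X := H_0$ intersected with enough hyperplanes to force $x_u = -x_v$; the clean statement is that $(\CA_G)_{H_{\{u\}\cup\{v\}} \cap \cdots}$ is linearly isomorphic to $\CA_{G/e}$, the isomorphism sending the identified vertex's coordinate to $x_u = x_v$ (note in $G/e$ multiple edges and loops are discarded, which matches the fact that $H_I$ and $H_{I'}$ coincide when the corresponding coordinate sums agree, and that $G[I]$ connected in $G$ corresponds to connectedness after contraction). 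This is exactly the content cited from the proof of \cite[Lem.~6.4]{cuntzkuehne:subgraphs}, so I would simply appeal to that computation. Once $\CA_{G/e}$ is exhibited as a localization of $\CA_G$, locality of $P$ finishes the argument.

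The main obstacle is purely bookkeeping in part (ii): pinning down the precise flat $X$ and verifying that the linear change of coordinates sending $(\CA_G)_X$ to $\CA_{G/e}$ respects the "connected induced subgraph" condition after identifying $u$ and $v$ and discarding resulting loops/multi-edges. This is a routine but slightly fiddly matching of combinatorial data (subsets of $N$ inducing connected subgraphs) with linear-algebraic data (which sums $\sum_{i\in I} x_i$ become proportional after passing to $X$); both induced-subgraph and contraction cases are already carried out in \cite{cuntzkuehne:subgraphs}, so in the write-up I would state the identification and refer to the proofs of \cite[Lem.~6.2, Lem.~6.4]{cuntzkuehne:subgraphs} for the verification, then conclude by locality of $P$.
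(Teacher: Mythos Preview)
Your approach is exactly the paper's: it does not prove the lemma at all but simply records (in the paragraph preceding the statement) that $\CA_{G[S]}$ and $\CA_{G/e}$ are each localizations of $\CA_G$, citing the proofs of \cite[Lem.~6.2, Lem.~6.4]{cuntzkuehne:subgraphs}, and then the conclusion is immediate from the definition of a local property. Your treatment of part~(i) is in fact more explicit than the paper's, and your hesitation in part~(ii) is harmless since you (like the paper) ultimately defer to \cite{cuntzkuehne:subgraphs}; for the record, the correct flat is $X = H_{\{u,v\}} \cap \bigcap_{i \in N\setminus\{u,v\}} H_{\{i\}}$, whose localization picks out exactly those $H_I$ with $I \cap \{u,v\}$ of even size.
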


\section{Projective Uniqueness: Proof of Theorem \ref{thm:AGiscomb}}
\label{s:thm:AGiscomb}

\begin{defn}
	\label{def:projunique}
	Let $\CA$ and $\CB$ be two arrangements in a $\BBK$-vector space $V$.
	\begin{itemize}
		\item[(i)] $\CA$ and $\CB$ are \emph{linearly isomorphic} if there is a $\varphi \in \GL(V)$
		such that $\CB =  \{\varphi(H) \mid H \in \CA\}$; denoted by $\CA \cong \CB$.
		\item[(ii)] $\CA$ and $\CB$ are \emph{$L$-equivalent} if $L(\CA)$ and $L(\CB)$ are isomorphic as posets;
			denoted by $\CA \cong_L \CB$.
		\item[(iii)] $\CA$ is \emph{projectively unique} if for any arrangement $\CC$ in $V$ we have:
			$\CC \cong_L \CA$ implies $\CC \cong \CA$.
	\end{itemize}
\end{defn}

\begin{remark}
	\label{rem:ProjUniqe}
	The term ``projectively unique'' originates from the dual point of view.
	Every hyperplane arrangement $\CA$ in $V$ yields a dual point configuration in the projective space $\BBP(V)$,
	denoted by $\CA^*$, see \cite[\S 2]{ziegler:matroid}.
	Two arrangements $\CA$, $\CB$ in $V$ are linearly isomorphic if and only if there is a projective transformation $\varphi^*$
	which maps $\CA^*$ to $\CB^*$.
\end{remark}

\begin{example}
	\label{ex:Proj(non)unique}
	(i). Let $\CA$ be a generic arrangement in $V = \BBK^\ell$ with $|\CA| = \ell+1$.
			Then $\CA = \{H_1,\ldots,H_\ell,H'\}$ where $\alpha_1,\ldots,\alpha_\ell \in V^*$
			with $H_i = \ker(\alpha_i)$ and $\alpha_1,\ldots,\alpha_\ell$ form a basis of $V^*$.
			If $\gamma = \sum_{i=1}^\ell a_i\alpha_i \in V^*$ with $\ker(\gamma) = H'$, then by the genericity of $\CA$
			we have $a_i\neq 0$ for all $1\leq i \leq \ell$. Hence, after rescaling the $\alpha_i$,
			we see that $\CA$ is linearly isomorphic to $\{\ker(x_1),\ldots,\ker(x_\ell),\ker(\sum_{i=1}^\ell x_i)\}$,
			where $x_1,\ldots,x_\ell$ is some basis of $V^*$.
			Thus, $\CA$ is projectively unique.
			
		(ii). Let $V = \BBQ^2$ and $x,y$ be a basis of $V^*$.
			Let $\CA = \{\ker(x),\ker(y),\ker(x+y),\ker(x+2y)\}$ and $\CB = \{\ker(x),\ker(y),\ker(x+y),\ker(x+3y)\}$ be arrangements in $V$.
			Then clearly $\CA \cong_L \CB$ but $\CA$ is not linearly isomorphic to $\CB$.
			Thus, $\CA$ (resp.\ $\CB$) is not projectively unique.
\end{example}

Fix a field $\BBK$.
Let $\CA$ be a $\BBK$-arrangement.
We specify what we mean by a subarrangement of $\CA$ being generated by a subset (or subarrangement) of $\CA$ 
(cf.\ \cite[Def.~3.4]{Cun21_Greedy} for an equivalent formulation for arrangements of rank $3$).

\begin{defn}
	\label{def:gen}
	Let $\varnothing \ne S \subseteq \CA$. Set $\Gen_0(\CA,S) := S$ and inductively $$\Gen_{i+1}(\CA,S) := \left\{H \in \CA \mid \exists\  J \subseteq L(\Gen_i(\CA,S)) : H = \sum_{X \in J} X \right\}$$ for $i \ge 0$.
	Then we say that
	$$\langle S \rangle_\CA := \bigcup_{i\ge 0} \Gen_i(\CA,S) \subseteq \CA$$
	is the subarrangement of $\CA$
	\emph{generated by $S$}. If $\langle S \rangle_\CA  = \CA$, then we say that $S$ generates $\CA$. 
\end{defn}

\begin{lemma}
	\label{lem:gen}
	Let $\CA$ and $\CB$ be two arrangements in $V$.
	Suppose $\varnothing \ne S \subseteq \CA$, $\varnothing \ne T \subseteq \CB$
	such that $\langle S \rangle_\CA = \CA$ and $\langle T \rangle_\CB = \CB$, i.e., $S$ generates $\CA$
	and $T$ generates $\CB$.
	If $\CA$ and $\CB$ are $L$-equivalent via a poset isomorphism $\psi:L(\CA)\to L(\CB)$ and $\varphi \in \GL(V)$
	such that $\psi(S) = \varphi(S) = T$ and $\psi(H) = \varphi(H)$ for all $H \in S$, 
	then $\varphi$ extends to a linear isomorphism between the whole arrangements, i.e., 
	$\varphi(\CA) = \CB$.
\end{lemma}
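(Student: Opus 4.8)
The plan is to prove by induction on the generation level $i$ that $\varphi$ maps $\Gen_i(\CA,S)$ onto $\Gen_i(\CB,T)$, and moreover that $\psi$ and $\varphi$ agree on all hyperplanes (and, more generally, on all flats) produced up to that stage. The base case $i=0$ is precisely the hypothesis $\psi(S)=\varphi(S)=T$ together with $\psi|_S=\varphi|_S$. For the inductive step, suppose the claim holds for $i$, so that $\varphi$ carries $\Gen_i(\CA,S)$ bijectively onto $\Gen_i(\CB,T)$ and agrees there with $\psi$.

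The first key observation is that $\varphi$ and $\psi$ then agree on the \emph{lattice} generated by these hyperplanes, i.e.\ on $L(\Gen_i(\CA,S))$: any $X \in L(\Gen_i(\CA,S))$ is an intersection $H_1 \cap \cdots \cap H_k$ of hyperplanes in $\Gen_i(\CA,S)$, and since $\psi$ is a poset isomorphism it sends this intersection (the join in $L(\CA)$) to the corresponding intersection in $L(\CB)$, which is exactly $\varphi(H_1) \cap \cdots \cap \varphi(H_k) = \varphi(X)$ because $\varphi$ is linear and agrees with $\psi$ on each $H_j$. The second key point is the passage from flats to the hyperplanes they span: if $H \in \Gen_{i+1}(\CA,S)$, choose $J \subseteq L(\Gen_i(\CA,S))$ with $H = \sum_{X \in J} X$ (span of linear subspaces). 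Applying the linear map $\varphi$, we get $\varphi(H) = \sum_{X \in J} \varphi(X) = \sum_{X \in J} \psi(X)$, and the right-hand side is a sum of flats in $L(\Gen_i(\CB,T))$, hence lies in $\CB$ and in fact belongs to $\Gen_{i+1}(\CB,T)$ by definition; moreover it equals $\psi(H)$ once we check that $\psi$ respects this spanning relation, which again follows because a poset isomorphism preserves the property ``$H$ is the smallest flat containing all $X \in J$'' among rank-one elements. Running this in both directions (using $\varphi^{-1}$ and $\psi^{-1}$ symmetrically) shows $\varphi(\Gen_{i+1}(\CA,S)) = \Gen_{i+1}(\CB,T)$ with $\varphi = \psi$ there, completing the induction. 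Taking the union over all $i$ yields $\varphi(\CA) = \varphi(\langle S\rangle_\CA) = \langle T\rangle_\CB = \CB$.

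The one genuinely delicate point, and the place where I expect to spend the most care, is verifying that the abstract poset isomorphism $\psi$ actually transports the relation ``$H = \sum_{X\in J} X$'' faithfully — that is, that the combinatorial data in $L(\CA)$ knows which hyperplane is spanned by a given collection of flats. This should follow from the fact that $H = \sum_{X \in J} X$ holds if and only if $H$ is the unique rank-one flat lying below every $X \in J$ in the poset (equivalently, $H = \bigwedge_{X \in J} X$ in $L(\CA)$ when that meet happens to have rank one), a purely order-theoretic condition preserved by $\psi$; combined with the linearity of $\varphi$ and the inductively established agreement $\varphi|_{\Gen_i} = \psi|_{\Gen_i}$, this forces $\varphi(H) = \psi(H)$. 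Once this bookkeeping is set up cleanly, the rest is a routine induction, and no hypothesis beyond those in the statement is needed.
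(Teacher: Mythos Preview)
Your approach is the same as the paper's: induct on $i$ to show $\varphi(\Gen_i(\CA,S)) = \Gen_i(\CB,T)$ with $\varphi$ and $\psi$ agreeing there, using linearity of $\varphi$ to carry sums of flats across. The paper's write-up is terser and does not isolate the agreement of $\varphi$ and $\psi$ on $L(\Gen_i)$ as a separate observation, but the content is identical.

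One correction to your final paragraph: the equivalence ``$H=\sum_{X\in J}X$ if and only if $H$ is the unique rank-one flat below every $X\in J$'' is not an ``if and only if'' in general. The forward implication holds, but the converse can fail --- e.g.\ in $\BBK^5$ one can build $\CA$ so that two codimension-$3$ flats $X_0,X_1\in L(\CA)$ have span $\{x_1=x_2=0\}$ of codimension $2$, while $H=\ker x_1$ is the only hyperplane of $\CA$ containing both. Fortunately you only need the forward direction, and $\psi(H)=\varphi(H)$ follows more directly: since $X\subseteq H$ for all $X\in J$ and $\psi$ preserves inclusion, $\psi(H)$ is a hyperplane of $\CB$ containing each $\psi(X)=\varphi(X)$, hence containing $\sum_{X\in J}\varphi(X)=\varphi(H)$; both being hyperplanes forces $\psi(H)=\varphi(H)$. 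With this adjustment your argument is complete.
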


\begin{proof}
	Set $G_i := \Gen_i(\CA,S)$, and $G_i' := \Gen_i(\CB,T)$.
	We argue by induction that $\varphi(G_i) = G_i'$ for all $i\geq 0$ which implies the statement.
	By assumption, the statement is clear for $i=0$.
	Let $H' \in G_{i+1}'$. Then $H' = \sum_{X'\in J'}X'$ for some $J' \subseteq L(G_i')$.
	But $L(G_i') = \psi(L(G_i))$ and by the induction hypothesis $G_i' = \psi(G_i) = \varphi(G_i)$.
	Thus, with $J = \psi^{-1}(J') = \varphi^{-1}(J')  \subseteq L(G_i)$,
	for $H = \sum_{X \in J}X \in G_{i+1}$ we have $\varphi(\sum_{X\in J}X) = \sum_{X' \in J'}X' = H'$. 
\end{proof}

\begin{prop}
	\label{prop:gen}
	Let $\CA$ be an essential and irreducible arrangement in $V \cong \BBK^\ell$.
	Suppose there is a subset $S$ of $\CA$ such that $\langle S \rangle_\CA = \CA$ and $|S| = \ell+1$.
	Then $\CA$ is projectively unique.
\end{prop}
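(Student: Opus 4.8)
The plan is to reduce everything to Example \ref{ex:Proj(non)unique}(i), namely to the fact that a generic arrangement of $\ell+1$ hyperplanes in $\BBK^\ell$ is projectively unique, and then to propagate this rigidity through the generation process using Lemma \ref{lem:gen}. So let $\CA$ be essential and irreducible in $V \cong \BBK^\ell$, let $S = \{H_1,\ldots,H_{\ell+1}\} \subseteq \CA$ with $\langle S\rangle_\CA = \CA$, and suppose $\CC \cong_L \CA$ via a poset isomorphism $\psi \colon L(\CA) \to L(\CC)$. Write $T := \psi(S) \subseteq \CC$; since $\psi$ is a lattice isomorphism it restricts to an isomorphism $L(\langle S\rangle_\CA) \to L(\langle T\rangle_\CC)$, and one should first check that $\psi(\langle S\rangle_\CA) = \langle T\rangle_\CC$, so that in particular $T$ generates $\CC$. (This is the same bookkeeping as in the proof of Lemma \ref{lem:gen}: the predicate ``$H = \sum_{X\in J} X$ for some $J \subseteq L(\Gen_i)$'' is preserved by $\psi$ in both directions.)

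Next I would analyse the subarrangement $S$ itself. Because $\CA$ is essential, $L(\CA)$ has rank $\ell$, and since $S$ generates $\CA$ the localization $L(S)$ must already have rank $\ell$ as well — otherwise $\Gen_1(\CA,S)$ could introduce no new hyperplane outside the span of $S$, and iterating, $\langle S\rangle_\CA$ would be confined to a proper flat, contradicting essentiality together with irreducibility. Hence $S$ contains $\ell$ hyperplanes in general position, say $H_1,\ldots,H_\ell$ after reindexing, with $\alpha_1,\ldots,\alpha_\ell$ a basis of $V^*$. The remaining hyperplane $H_{\ell+1} = \ker\bigl(\sum_{i=1}^\ell a_i\alpha_i\bigr)$; here I claim every $a_i \neq 0$. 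If some $a_i = 0$, then $H_{\ell+1} \supseteq \bigcap_{j\neq i}H_j$, i.e. $S \setminus \{H_i\}$ lies in a common codimension... more precisely the hyperplanes of $S$ other than $H_i$ all contain a common line, and then $S$ — hence $\langle S\rangle_\CA = \CA$ — decomposes as a product, contradicting irreducibility of $\CA$. So $S$ is, up to rescaling the $\alpha_i$, linearly isomorphic to the standard generic configuration $\{\ker x_1,\ldots,\ker x_\ell, \ker(x_1+\cdots+x_\ell)\}$, and the same analysis applies verbatim to $T \subseteq \CC$: it too is a generic $(\ell+1)$-configuration in $V$.

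Now apply Example \ref{ex:Proj(non)unique}(i) to the pair $(S,T)$: both are generic arrangements of $\ell+1$ hyperplanes in $V$, they are $L$-equivalent via the restriction of $\psi$, and generic arrangements of this size are projectively unique, so there is $\varphi \in \GL(V)$ with $\varphi(S) = T$. The only remaining point is to arrange that $\varphi$ is \emph{compatible} with $\psi$ on $S$, i.e. $\varphi(H) = \psi(H)$ for each $H \in S$ (not merely $\varphi(S) = T = \psi(S)$ as sets); but a generic $(\ell+1)$-configuration has no nontrivial combinatorial automorphisms fixing the lattice pointwise except... here one uses that $\psi$ already specifies which hyperplane of $T$ corresponds to which hyperplane of $S$, and $\GL(V)$ acts transitively enough on labelled generic $(\ell+1)$-tuples (the standard model is rigid up to scalars, which act trivially on hyperplanes) to realize precisely that labelled bijection — so $\varphi$ can be chosen with $\varphi|_S$ agreeing with $\psi|_S$ on hyperplanes and inducing $\psi$ on $L(S)$. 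With such a $\varphi$ in hand, Lemma \ref{lem:gen} applies directly (with $S,T$ as the generating sets), yielding $\varphi(\CA) = \CC$, i.e. $\CA \cong \CC$. Since $\CC$ was arbitrary with $\CC \cong_L \CA$, the arrangement $\CA$ is projectively unique.

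The main obstacle I anticipate is precisely this last compatibility issue — upgrading ``$\varphi(S) = T$ as unlabelled arrangements'' to ``$\varphi$ realizes the specific labelled bijection dictated by $\psi$, and induces $\psi$ on $L(S)$'' — since Lemma \ref{lem:gen} genuinely needs the hypothesis $\psi(H) = \varphi(H)$ for all $H \in S$. The resolution is that the standard generic model $\{\ker x_1,\ldots,\ker x_\ell,\ker\sum x_i\}$ has linear automorphism group acting as the full symmetric group on its $\ell+1$ hyperplanes (permuting coordinates, together with the ``all-ones'' symmetry), so after conjugating $\varphi$ by a suitable element of this group one matches the labelling; and the induced map on $L(S)$ is then forced to equal $\psi$ because a generic arrangement's lattice is the Boolean lattice on $\le \ell$ of its atoms, determined by the bijection on atoms. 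The essentiality and irreducibility hypotheses are used exactly where indicated: essentiality to force $\rank L(S) = \ell$ (hence the presence of a basis inside $S$), and irreducibility to force all coefficients $a_i$ nonzero (hence genuine genericity of $S$).
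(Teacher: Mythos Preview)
Your proof is correct and follows essentially the same route as the paper's: reduce $S$ to a generic $(\ell+1)$-configuration using essentiality and irreducibility, do the same for $T=\psi(S)$, build a $\varphi\in\GL(V)$ matching $\psi$ on $S$, and invoke Lemma~\ref{lem:gen}. The only notable difference is in how $\varphi$ is produced: the paper normalises $S$ to $\{\ker x_1,\dots,\ker x_\ell,\ker\sum x_i\}$, writes $T=\{\ker\beta_1,\dots,\ker\beta_\ell,\ker\sum\beta_i\}$ with the labelling already dictated by $\psi$, and simply takes $\varphi$ dual to $x_i\mapsto\beta_i$ --- so the compatibility $\varphi(H)=\psi(H)$ on $S$ is built in from the start, and your detour through the automorphism group of the standard model (to upgrade an unlabelled match to a labelled one) is unnecessary.
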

\begin{proof}
	Since $\CA$ is irreducible, after an appropriate coordinate change we may assume without loss of generality that $S = \{H_1=\ker(x_1),\ldots,H_\ell = \ker(x_\ell),H_{\ell+1}=\ker(\sum_{i=1}^\ell x_i)\}$
	where $x_1,\ldots,x_\ell$ is a basis for $V^*$ (cf.\ Example \ref{ex:Proj(non)unique}(i)).
	Let $\CB$ be another arrangement in $V$ which is $L$-equivalent to $\CA$ via $\psi:L(\CA)\to L(\CB)$ and set $T := \psi(S)$.
	Since $\psi$ is a poset isomorphism, we have $\langle T \rangle_\CB = \CB$.
	Moreover, $T$ spans a generic subarrangement of $\CB$ with $\ell+1$ hyperplanes, 
	say $T = \{\psi(H_1)= \ker(\beta_1),\ldots,\psi(H_\ell) = \ker(\beta_\ell),\psi(H_{\ell+1}) = \ker(\gamma)\}$.
	After rescaling $\beta_1,\ldots,\beta_\ell$ by suitable scalars, we can assume $\gamma=\sum_{i=1}^\ell\beta_i$
	and note that $\beta_1,\ldots,\beta_\ell$ yield a basis for $V^*$. Thus, if we take $\varphi \in \GL(V)$ 
	as the unique linear map which is dual to the transformation of $V^*$ which maps $x_1,\ldots,x_\ell$ to $\beta_1,\ldots,\beta_\ell$,
	we have $\psi(S) = \varphi(S) = T$ and $\varphi(H) = \psi(H)$ for all $H\in S$. 
	By Lemma \ref{lem:gen}, $\CB$ is linearly isomorphic to $\CA$ which concludes the proof.
\end{proof}

Finally, 
Theorem \ref{thm:AGiscomb} is an immediate consequence of our next result.

\begin{prop}
	\label{prop:genCSG}
	Let $G$ be a connected graph on $[n]$ and 
	let $S := \{\ker x_1, \ldots, \ker x_n, \ker(x_1 + \ldots + x_n)\} \subseteq \CA_G$. Then
	 $\langle S \rangle_{\CA_G} = \CA_G$.
	 In particular, $\CA_G$ is projectively unique over $\BBQ$.
\end{prop}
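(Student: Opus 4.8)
The plan is to show, by induction on $i$, that $\Gen_i(\CA_G, S)$ eventually contains every hyperplane $H_I$ with $G[I]$ connected; combined with Proposition~\ref{prop:gen} (noting $|S| = n+1 = \ell+1$ and that $\CA_G$ is essential and irreducible for $G$ connected) this yields projective uniqueness. First I would record the two elementary "moves" available inside $\langle S\rangle_{\CA_G}$: if $H_I, H_J \in \Gen_i(\CA_G,S)$ and the codimension-two flat $H_I \cap H_J$ lies on a third hyperplane $H_K \in \CA_G$, then $H_K \in \Gen_{i+1}(\CA_G,S)$. Concretely, the linear dependence $x_I + x_J = x_{I\triangle J} + 2x_{I\cap J}$ (writing $x_I := \sum_{i\in I} x_i$) shows that whenever $I \cap J = \varnothing$ we have $H_{I\cup J}$ expressible from $H_I, H_J$ via their intersection, and symmetrically, given $H_I$ and $H_{I \cup J}$ with $I \cap J = \varnothing$, the flat $H_I \cap H_{I\cup J}$ determines $H_J$. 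So inside $\langle S\rangle_{\CA_G}$ one may freely take disjoint unions and "differences" of index sets, as long as every index set produced along the way induces a connected subgraph of $G$.

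The combinatorial heart is then: starting from the singletons $\{1\}, \ldots, \{n\}$ (available since $\{\ker x_j\} \subseteq S$), generate all connected $I \subseteq [n]$ using only unions $I \mapsto I \cup \{j\}$ where $j$ is adjacent to $I$ in $G$ — each such step keeps the subgraph connected and adds a single new vertex, so it is a legal disjoint-union move. Since $G$ is connected, every connected induced subgraph $G[I]$ can be built up one vertex at a time through connected intermediate stages (choose any spanning tree of $G[I]$ and add vertices in a rooted order), so an easy induction on $|I|$ shows $H_I \in \langle S\rangle_{\CA_G}$ for all connected $I$. The presence of $\ker(x_1 + \cdots + x_n) = H_{[n]}$ in $S$ is what makes the argument get off the ground in the sense of Proposition~\ref{prop:gen}: we need $|S| = \ell + 1$ generators whose span is generic, and $S$ supplies exactly a coordinate frame plus the diagonal.

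The step I expect to be the main obstacle is verifying that the flat moves are genuinely realized at the level of $\Gen_{i+1}$ as defined — i.e.\ that $H = \sum_{X \in J} X$ for a suitable set $J$ of flats of $\Gen_i$, not merely that $H$ is in the linear span of defining forms. One must check that for disjoint $I, J$ with $I, J, I\cup J$ all connected, the flat $X := H_I \cap H_{I \cup J}$ satisfies $H_J = X + (\text{another flat})$, or more cleanly, exhibit $H_J$ directly as a sum $\sum_{X\in J'} X$ of codimension-two (or higher) flats already present; the cleanest route is to observe $H_I \cap H_{I\cup J} \subseteq H_J$ forces $H_J \supseteq$ that flat, and since $H_J$ has codimension one it equals the sum of that flat with $H_j$ for any single $j \notin I \cup J$ that keeps things connected, but handling the edge cases (e.g.\ $I \cup J = [n]$, where no such spare vertex exists) requires a small separate argument using $H_{[n]} \in S$ and a different decomposition. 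Once these bookkeeping details for the generation steps are settled, the induction over $|I|$ and the appeal to Proposition~\ref{prop:gen} are routine.
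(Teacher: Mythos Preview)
Your inductive approach has a real gap in the step you yourself flag as the obstacle. You propose to realize the ``difference'' move by writing $H_J = (H_I \cap H_{I\cup J}) + H_{\{j\}}$ for some $j \notin I\cup J$. But $H_{\{j\}} = \ker x_j$ is \emph{not} contained in $H_J = \ker \sum_{k\in J} x_k$ when $j \notin J$, so this sum is not a subspace of $H_J$ and cannot equal it. The same problem afflicts the union move: from $H_I$ and $H_{\{j\}}$ alone (with $j$ adjacent to $I$) you get the codimension-two flat $H_I\cap H_{\{j\}}\subseteq H_{I\cup\{j\}}$, but no second flat built from coordinate hyperplanes alone lies inside $H_{I\cup\{j\}}$ without already being inside $H_I\cap H_{\{j\}}$. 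So the one-vertex-at-a-time induction does not move without invoking $H_{[n]}$ at \emph{every} step, not merely in edge cases.

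The paper's argument bypasses the induction entirely and is a one-liner. For any connected $I\subseteq [n]$ set
\[
X \;=\; \bigcap_{i\in I}\ker x_i, \qquad
Y \;=\; \ker\Bigl(\sum_{i=1}^n x_i\Bigr)\;\cap\;\bigcap_{j\notin I}\ker x_j.
\]
Both $X$ and $Y$ lie in $L(S)$ already; one checks $\dim X = n-|I|$, $\dim Y = |I|-1$, $X\cap Y=\{0\}$, and $X,Y\subseteq H_I$, whence $H_I = X+Y$. Thus every $H_I$ is in $\Gen_1(\CA_G,S)$ in a single step, and Proposition~\ref{prop:gen} finishes. The moral is that $H_{[n]}\in S$ is not there merely to make $|S|=\ell+1$; it supplies the second summand $Y$ directly, and once you see this there is no need for any vertex-by-vertex build-up.
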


\begin{proof}
	Let $\varnothing \ne I \subseteq [n]$ such that the induced subgraph $G[I]$ of $G$ on $I$ is connected, i.e., $H_I = \ker(\sum_{i\in I}x_i) \in \CA_G$.
	Then define
	\begin{align*}
	X & := \bigcap_{i \in I}\ker(x_i) \text{ and } \\
	Y & := \ker(x_1 + \ldots + x_n) \cap \bigcap_{j \in [n]\setminus I}\ker(x_j).
\end{align*}
Both $X$ and $Y$ belong to the lattice of intersections of the subarrangement $S$ of $\CA_G$, $\dim X = n - |I|$, $\dim Y = |I|-1$, $\dim (X \cap Y) = 0$, and $X, Y \subseteq H_I$.
It follows that $H_I = X + Y \in \langle S \rangle_{\CA_G}$. Since $\varnothing \ne I \subseteq [n]$ is arbitrary such that the induced subgraph $G[I]$ of $G$ on $I$ is connected, the result follows thanks to Proposition \ref{prop:gen}.
\end{proof}

\begin{remark}
	\label{rem:gen}
	Proposition \ref{prop:gen} applies more generally to $0/1$-arrangements $\CA$ over any field 
	which satisfy the condition of Proposition \ref{prop:genCSG}. Also note that, by assumption, 
	such $\CA$ contain the generic subarrangement of full rank consisting of the hyperplanes in $S$.
	Thus $\CA$ is necessarily irreducible.
\end{remark}

\section{Accuracy: Proof of Theorem \ref{thm:accurateAG}}
\label{s:accurateAG}

In this section we characterize accuracy for connected subgraph arrangements. The arrangement $\CA_G$ is free if and only if $G$ is a path, almost-path, path-with-triangle, or cycle graph. All of these arrangements are known to be accurate, the only exception being the path-with-triangle graph. The path graph and the cycle graph are isomorphic to the braid arrangement and the cone over the Shi arrangement, respectively. The braid arrangement is accurate, see \cite[Thm.~1.6]{mueckschroehrle:accurate}. Also, the cone over the Shi arrangement was shown to be accurate in \cite[Thm.~1.8]{mueckschroehrle:accurate}. The almost-path graph is MAT-free, and hence accurate due to \cite[Thm.~1.2]{mueckschroehrle:accurate}. We are left to show that $\CA_{\Delta_{n,k}}$ is accurate. We first need to study the automorphism group of the almost-path graph arrangement.
\begin{remark}
	 The arrangement $\CA_{A_{n,k}}$ contains three independent path graph arrangements $\CA_{P_1}$,$\CA_{P_{k-1}}$, and $\CA_{P_{n-k}}$ corresponding to each of the three branches of the almost-path graph. It follows that the automorphism group of the arrangement contains the product of the automorphism groups of these three path graph arrangements, namely $\mathfrak{S}_2\times\mathfrak{S}_k\times\mathfrak{S}_{n-k+1}$. 
   Denoting by $\Aut(A_{n,k})$ the automorphism group of the graph $A_{n,k}$ we consider the subgroup  
   \begin{equation}
       \label{eq:ank}
          \mathcal G := \Aut(A_{n,k})\ltimes\mathfrak{S}_2\times\mathfrak{S}_{k-1}\times\mathfrak{S}_{n-k+1}
   \end{equation}
  of the automorphism group of $\CA_{A_{n,k}}$.
  More precisely, we obtain the following groups depending on $n$ and $k$:
	 \begin{itemize}
	 	\item $\Aut(A_{3,2})\ltimes(\mathfrak{S}_2\times\mathfrak{S}_2\times\mathfrak{S}_2)$ for $(n,k)=(3,2)$,
	 	\item $(\Aut(A_{n,2})\ltimes(\mathfrak{S}_2\times\mathfrak{S}_2))\times\mathfrak{S}_{n-2}$ for $(n,k)=(n,2)$,
	 	\item $\mathfrak{S}_2\times (\Aut(A_{2k-1,k})\ltimes(\mathfrak{S}_k\times\mathfrak{S}_k))$ for $(n,k)=(2k-1,k)$,
	 	\item $\mathfrak{S}_2\times\mathfrak{S}_k\times\mathfrak{S}_{n-k+1}$ otherwise (here $\Aut(A_{n,k})$ is trivial).
	 \end{itemize}
\end{remark}

From the description above, we deduce that the action of the group $\mathcal G$ from \eqref{eq:ank}
on $\CA_{A_{n,k}}$ has exactly $4-g$ orbits, where $g$ is the minimal number of generators of $\Aut(A_{n,k})$. In the general case when $\Aut(A_{n,k}) = 1$, there are
\begin{enumerate}
	\item three orbits each corresponding to a path graph branch of $A_{n,k}$,
	\item one orbit consisting of all hyperplanes $H_I$ with $k\in I$.
\end{enumerate}
When $\Aut(A_{n,k})$ is non-trivial, some of the first three orbits are merged together, namely, those corresponding to branches of the same length.

We now prove accuracy for $\CA_{\Delta_{n,k}}$. Consider the connected subgraph arrangement $\CA_{A_{n+1,k+1}}$ with its set of ordered exponents $(e_1, e_2, \ldots, e_{n+2})_\leq$, and its MAT-partition $\pi = (\pi_1, \ldots,\pi_{n+2})$, where $\pi_d = \{H_I \mid |I| = d\}$. For every hyperplane $H$ in the same orbit of $x_{k+1} = 0$, i.e., for $H=H_I$ with $k+1\in I$, we have $$\CA_{\Delta_{n,k}} = \CA_{A_{n+1,k+1}}^{H}.$$ Because $\CA_{A_{n+1,k+1}}$ is MAT-free,  \cite[Thm.~3.9]{mueckschroehrle:accurate} implies that for each $1\leq k \leq n$ and every $|\pi_{k+1}|\leq q\leq |\pi_k|$, there exists a subset $\CC\subseteq \pi_k$ with $|\CC| = q$ such that, for $X = \bigcap_{H\in\CC} H$, of dimension $n+2-q$, the restriction $\CA_{A_{n+1,k+1}}^X$ is free with exponents $(e_1, e_2, \ldots, e_{n+2-q})_\leq$. This is how the accuracy of MAT-free arrangements is proven in general. Now we observe that for some $H\in\pi_i\setminus\CC$, we still have that $\CA_{A_{n+1,k+1}}^{X\cap H}$ is free with exponents $(e_1, e_2, \ldots, e_{n+1-q})_\leq$. Because an element $H_I$ with $k+1\in I$ exists in every block of the MAT-partition $\pi$ of $\CA_{A_{n+1,k+1}}$, we can always choose such an element for every choice of $X$. Finally, we observe that $\exp(\CA_{\Delta_{n,k}})=(e_1,  e_2, \ldots, e_{n+1})_\leq \subseteq (e_1, \ e_2, \ldots, e_{n+2})_\leq = \exp(\CA_{A_{n+1,k+1}})$. Therefore, we can conclude that
$$
\CA_{\Delta_{n,k}}^X = (\CA_{A_{n+1,k+1}}^H)^X = \CA_{A_{n+1,k+1}}^{X\cap H}
$$
is again free with the right exponents, that is, $\CA_{\Delta_{n,k}}$ is accurate. \qed

There is a natural stronger notion of accuracy as in Definition \ref{def:accurate}. We say that the free arrangement $\CA$ is \emph{flag-accurate}, if $\CA$ is accurate and if the tuple 
of flats $(X_1, X_2, \ldots, X_\ell)$ from Definition \ref{def:accurate} can be chosen to be a flag in $L(\CA)$, see \cite[Def.~1.1]{mueckschroehrletran:flagaccurate}.
It is shown in \cite[Cor.~1.7]{mueckschroehrletran:flagaccurate} that Coxeter arrangements are flag-accurate and in \cite[Thm.~1.14]{mueckschroehrletran:flagaccurate} that all Shi arrangements are also flag-accurate. Consequently, the connected subgraph arrangements $\CA_G$ are flag-accurate provided $G$ is a path graph or a cycle graph.
It follows from 
\cite[Lem.~3.1]{mueckschroehrletran:flagaccurate} and \cite[Thm.~4.1, Prop.~5.1, Thm.~5.4]{cuntzkuehne:subgraphs}
that $\CA_{A_{n+1,k+1}}$ is flag-accurate if and only if 
$\CA_{\Delta_{n,k}}$ is flag-accurate 
for all $n$ and $k$.
So potentially, this gives an inductive means to derive this property also for almost-path graphs and path-with-triangle graphs.
Moreover, it is shown in \cite[Thm.~4.8]{mueckschroehrletran:flagaccurate} that ideal arrangements up to rank $8$ are also flag-accurate. Thus $\CA_G$ is flag-accurate 
for $ G = A_{n,k}$ for $3 \le n \le 8$ and $2\le k \le 3$. 
In view of these results and of Theorem \ref{thm:accurateAG}, we 
put forward the following.

\begin{conjecture}
	\label{conj:flag-accAG}
	$\CA_G$ is free if and only if it is flag-accurate.
\end{conjecture}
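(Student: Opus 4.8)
The forward implication is immediate, since a flag-accurate arrangement is in particular accurate, and hence free. For the converse, Theorem~\ref{thm:freeAG} reduces us to the four families $P_n$, $C_n$, $A_{n,k}$, $\Delta_{n,k}$. The path and cycle cases are already known: $\CA_{P_n}$ is the braid (Coxeter) arrangement and $\CA_{C_n}$ the cone over the Shi arrangement, and both are flag-accurate by \cite[Cor.~1.7]{mueckschroehrletran:flagaccurate} and \cite[Thm.~1.14]{mueckschroehrletran:flagaccurate}. Moreover, as recorded above, \cite[Lem.~3.1]{mueckschroehrletran:flagaccurate} together with \cite[Thm.~4.1, Prop.~5.1, Thm.~5.4]{cuntzkuehne:subgraphs} shows that $\CA_{A_{n+1,k+1}}$ is flag-accurate if and only if $\CA_{\Delta_{n,k}}$ is. Hence the whole statement reduces to proving that $\CA_{A_{n,k}}$ is flag-accurate for all $n,k$.

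The plan for the almost-path family is to build an explicit flag realising the exponents and to recurse along restrictions. By \cite[Thm.~4.1]{cuntzkuehne:subgraphs}, $\CA_{A_{n,k}}$ is MAT-free with MAT-partition $\pi_d = \{H_I \mid |I| = d\}$, so by Remark~\ref{rem:MAT-free}(b) its ordered exponents $(e_1,\ldots,e_{n+1})_\le$ are the block sizes of the dual partition of $\pi$; one checks (for the genuinely almost-path cases $1<k<n$) that $e_{n+1}=n+1$ is the unique largest exponent and $e_n=n$. The first step is to take $X_n := H_{[n+1]} = \ker(x_1+\ldots+x_{n+1}) \in \pi_{n+1}$: applying the Addition--Deletion Theorem~\ref{thm:add-del} to the triple $(\CA_{A_{n,k}},\, \CA_{A_{n,k}}\setminus\{X_n\},\, \CA_{A_{n,k}}^{X_n})$ — where the deletion is again MAT-free with $\exp = (e_1,\ldots,e_{n-1},n,n)_\le$ by the same dual-partition count — shows that $\CA_{A_{n,k}}^{X_n}$ is free with exponents $(e_1,\ldots,e_n)_\le$, exactly the required truncation. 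The remaining steps would identify the restriction $\CA_{A_{n,k}}^{X_n}$ combinatorially (for small parameters it is again a connected subgraph arrangement, e.g.\ $\CA_{A_{3,2}}^{H_{[4]}} = \CA_{\Delta_{2,1}} = \CA_{K_3}$, but in general only a free $0/1$-arrangement) and continue recursively: at each stage one intersects the current flat with a single further hyperplane of the $0/1$-arrangement, chosen so that the restriction to the new flat peels off precisely the current top exponent, thereby producing a chain $V = X_{n+1} \supset X_n \supset \ldots \supset X_1$ in $L(\CA_{A_{n,k}})$ with $\CA_{A_{n,k}}^{X_d}$ free of exponents $(e_1,\ldots,e_d)_\le$ (here one uses that restriction is transitive, $(\CA^X)^{X'} = \CA^{X'}$ for flats $X'\subseteq X$). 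By the equivalence above this would also give flag-accuracy of every $\CA_{\Delta_{n,k}}$, completing all four families.

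The main obstacle is precisely this last recursive step, i.e.\ verifying that the chosen flats can be taken nested. The exponents here are highly non-generic (for instance $\exp\CA_{A_{3,2}} = (1,3,3,4)$ and $\exp\CA_{K_3} = (1,3,3)$), so the successive restrictions $\CA_{A_{n,k}}^{X_d}$ are non-generic free arrangements whose combinatorial type changes at each step and typically leaves the class of connected subgraph arrangements; one must then control all of them simultaneously inside a single flag, rather than choosing an unrelated witnessing flat at each codimension — which is all that the plain accuracy argument via \cite[Thm.~3.9]{mueckschroehrle:accurate} provides, and which is how the accuracy of $\CA_{\Delta_{n,k}}$ was obtained above. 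Abstractly, the crux is a flag-refinement of the implication ``MAT-free $\Rightarrow$ accurate'': a positive answer to the question whether every MAT-free arrangement is flag-accurate would immediately yield flag-accuracy of $\CA_{A_{n,k}}$, and hence settle the conjecture.
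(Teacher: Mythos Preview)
This statement is a \emph{conjecture} in the paper, not a theorem: the authors do not prove it, they merely put it forward on the strength of partial evidence. So there is no ``paper's own proof'' to compare your proposal against. What the paper does establish is exactly the reduction you reproduce: $\CA_{P_n}$ and $\CA_{C_n}$ are flag-accurate by \cite[Cor.~1.7, Thm.~1.14]{mueckschroehrletran:flagaccurate}; the equivalence $\CA_{A_{n+1,k+1}}$ flag-accurate $\Leftrightarrow$ $\CA_{\Delta_{n,k}}$ flag-accurate follows from \cite[Lem.~3.1]{mueckschroehrletran:flagaccurate} and \cite[Thm.~4.1, Prop.~5.1, Thm.~5.4]{cuntzkuehne:subgraphs}; and the almost-path case is verified only up to rank $8$ via \cite[Thm.~4.8]{mueckschroehrletran:flagaccurate}. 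Beyond that the paper simply states the conjecture.

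Your proposal is therefore not a proof but a plan, and you say so yourself: the ``main obstacle'' you identify --- building a nested flag of restrictions realising the successive truncations of the exponents for $\CA_{A_{n,k}}$ --- is precisely the open part. Your closing remark that the question ``is every MAT-free arrangement flag-accurate?'' would settle it is correct, but that question is itself open (see \cite{mueckschroehrletran:flagaccurate}). The first restriction step you write down (to $H_{[n+1]}$) is fine, but as you note the subsequent restrictions leave the class of connected subgraph arrangements, and nothing you have written controls them. In short: your reduction matches the paper's, your honesty about the gap is warranted, and the statement remains a conjecture.
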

 
 It is clear that flag-accuracy is a combinatorial property which only depends on the lattice of intersections. This is not known for 
 accuracy, though; see \cite[Rem.~1.3]{mueckschroehrletran:flagaccurate}.

\section{Inductive Freeness: Evidence for Conjecture \ref{conj:freeAG2}}
\label{s:conj:freeAG2}

According to Theorem \ref{thm:freeAG},
$\CA_G$ is free if and only if $G$ is
a path, a cycle, an almost path, or a path-with-triangle graph.
For $G$ a path graph, $\CA_G$ is supersolvable, by \cite[Cor.~8.11]{cuntzkuehne:subgraphs}, and supersolvable arrangements are inductively free,
thanks to work of Jambu and Terao \cite[Thm.~4.2]{jambuterao:free}.
For $G$ a cycle,
Cuntz and K\"uhne show in
\cite[Prop.~3.1]{cuntzkuehne:subgraphs} that
$\CA_G$ is  inductively free.
Thus in order to derive Conjecture \ref{conj:freeAG2} we need to show that $\CA_G$ is  inductively free for
$G$ an almost path or a path-with-triangle graph.

We observe that the arrangements $\CA_{A_{n,2}}$ are particular ideal arrangements in the root system of type $D_{n+1}$, cf.~\cite[Ex.~1.4]{cuntzkuehne:subgraphs}. The latter
are known to be inductively free, \cite[Thm.~1.7]{roehrle:ideal}.
Thus we know in particular that $\CA_{A_{n,2}}$ is inductively free for $n \ge 4$.
In the same vein, for $n = 5,6,7$ the arrangements $\CA_{A_{n,3}}$ are certain ideal 
arrangements in the root system of type $E_{n+1}$.
As all ideal 
arrangements are inductively free, owing to \cite[Thm.~1.4]{cuntzroehrleschauenburg:ideal},
so are  $\CA_{A_{n,3}}$, for $n = 5,6,7$.

\section{Inductive Factoredness: Proof of Theorem \ref{thm:factoredAG2}}
\label{s:thm:factoredAG2}

According to Theorem \ref{thm:factoredAG},
$\CA_G$ is factored if and only if $G$ is a path graph or
$G =\Delta_{n,1}$ for some $n \ge 2$.
For $G$ a path graph, $\CA_G$ is supersolvable, by \cite[Cor.~8.11]{cuntzkuehne:subgraphs} and supersolvable arrangements are inductively factored, cf.~\cite{jambuparis:factored} or \cite[Prop.~3.11]{hogeroehrle:factored}.

Thus, in order to derive Theorem \ref{thm:factoredAG2} we need to show that $\CA_G$ is inductively factored for
$G =\Delta_{n,1}$ with $n \ge 2$.
For that purpose we present an induction table
for an inductive factorization $\pi$ of $\Delta_{n,1}$ for each $n \ge 2$. We require the following auxiliary result. Let $P_n$ be path graph on $n$ nodes.
Here we use the following labelling of $\Delta_{n,1}$:

		\begin{figure}[H]
			\begin{tikzpicture}
				\node (t) at (-1,0) {$\Delta_{n,1}$:};
				\node (v2h+) at (2.5,0) {};
				\node (v2kmid) at (3,0) {};
				\node (vkh-) at (3.5,0) {};
				\node (vk) at (0,0) {};
				\node (vn+1) at (1,1.6) {};
				\node (vk+1) at (2,0) {};
				\node (vk+1h+) at (2.5,0) {};
				\node (vk+1nmid) at (3,0) {};
				\node (vnh-) at (3.5,0) {};
				\node (vn) at (4,0) {};
				\fill ($(vk)$) circle[radius=2pt];
				\fill ($(vk+1)$) circle[radius=2pt];
				\fill ($(vn)$) circle[radius=2pt];
				\fill ($(vn+1)$) circle[radius=2pt];
				\node[below] at ($(vk)$) {$1$};
				\node[below] at ($(vk+1)$) {$2$};
				\node[below] at ($(vn)$) {$n$};
				\node[right] at ($(vn+1)$) {$0$};
    \draw ($(vk)$) -- ($(vk+1)$) -- ($(vk+1h+)$);
				\draw ($(vk)$) -- ($(vn+1)$) -- ($(vk+1)$);
				\draw[dashed] ($(vk+1h+)$) -- ($(vnh-)$);
				\draw ($(vnh-)$) -- ($(vn)$);
			\end{tikzpicture}
		\end{figure}

\begin{lemma}
	\label{lem:factoredAG2}
		For $n \ge 2$, consider the $n$-arrangement $\CB_n = \CA_{P_n} \bigcup \{ H_1,\ldots ,H_{n-1} \}$, where $H_k= \ker\left( 2x_0+x_1+\cdots+x_k \right)$ for $k=1,\ldots n-1$.
	Let $\pi=\pi^{n}$ be the partition of $\CB_n$ given by:
	\begin{align*}
		\pi_1  & :=  \{\ker(x_0)\} \\
		\pi_k & :=  \{\ker(x_j +\ldots+ x_{k-1}) \mid j = 0,\ldots ,k-1 \} \cup \{H_{k-1}\} \ \text{for}\ k=2, \ldots,n.
	\end{align*}
	Then $\pi$ is an inductive factorization of $\CB_n$.
	In particular, $\CB_n$ is inductively free with $\exp(\CB_n) = (1,3,4, \ldots, n+1)$.
\end{lemma}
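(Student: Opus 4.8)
The plan is to verify that the partition $\pi = \pi^n$ is an inductive factorization by exhibiting an explicit induction table of factorizations, in the sense of Remark \ref{rem:indtable}, built by adding hyperplanes one block at a time. I would proceed by induction on $n$. The base case $n=2$ is a small rank-$2$ computation: $\CB_2 = \CA_{P_2} \cup \{H_1\}$ consists of the four hyperplanes $\ker(x_0)$, $\ker(x_1)$, $\ker(x_0+x_1)$, $\ker(2x_0+x_1)$ in $\BBK^2$, with $\pi_1 = \{\ker(x_0)\}$ and $\pi_2$ the remaining three; since any rank-$2$ arrangement with a nonsingleton partition having a singleton block is nice, and one checks directly that the induced partition at every point is fine, $\pi^2$ is an (inductive) factorization with $\exp(\CB_2) = (1,3)$.

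For the inductive step, I would pass from $\CB_{n-1}$ to $\CB_n$ by adjoining the block $\pi_n = \{\ker(x_j + \cdots + x_{n-1}) \mid j = 0,\ldots,n\} \cup \{H_{n-1}\}$ (note this has $n+2$ hyperplanes, so $|\pi_n| = n+1$, consistent with the claimed exponent). The key point is to recognize this last addition as governed by Theorem \ref{thm:add-del-factored}: I would take the triple $(\CB_n, \CB_n', \CB_n'')$ associated with some $H_0 \in \pi_1$, i.e.\ $H_0 = \ker(x_0)$, so that $\CB_n' = \CB_n \setminus \{H_0\}$ and $\CB_n'' = \CB_n^{H_0}$. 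Restricting to $\ker(x_0)$ sets $x_0 = 0$, which collapses $H_{k-1} = \ker(2x_0 + x_1 + \cdots + x_k)$ onto $\ker(x_1 + \cdots + x_k)$ and collapses $\ker(x_0 + \cdots + x_{k-1})$ onto $\ker(x_1 + \cdots + x_{k-1})$; so $\CB_n^{H_0}$ is linearly isomorphic to $\CA_{P_{n-1}}$ on coordinates $x_1,\ldots,x_{n-1}$ (a supersolvable, hence inductively factored, arrangement of rank $n-1$ with exponents $(1,2,\ldots,n-1)$), and the restriction map $\R_{\pi,H_0}$ is a bijection from $\CB_n \setminus \pi_1$ onto it carrying $\pi''$ to the standard nice partition of the braid-type arrangement $\CA_{P_{n-1}}$. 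Meanwhile $\CB_n' = \CB_n \setminus\{\ker(x_0)\}$, and I would identify the induced partition $\pi'$ with (a reindexing of) $\pi^{n-1}$ on $\CB_{n-1}$ after the obvious identification — more precisely, I would check $\CB_n' \cong \CB_{n-1}$ with $\pi'$ corresponding to $\pi^{n-1}$, which is an inductive factorization by the inductive hypothesis, with $\exp(\CB_{n-1}) = (1,3,4,\ldots,n)$. Given these two inputs, part (ii) of Definition \ref{def:indfactored} (equivalently the ``Addition'' direction of Theorem \ref{thm:add-del-factored}) yields that $(\CB_n,\pi)$ is inductively factored; the exponents then follow from Lemma \ref{lem:FactoredFree} as the block sizes $(|\pi_1|,\ldots,|\pi_n|) = (1,3,4,\ldots,n+1)$.

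The main obstacle I anticipate is the bookkeeping needed to justify the two identifications cleanly: first, that $\R_{\pi,H_0}$ really is a bijection onto $\CB_n^{H_0}$ — one must verify no two hyperplanes of $\CB_n \setminus \pi_1$ restrict to the same hyperplane on $\ker(x_0)$, and that the image is exactly $\CA_{P_{n-1}}$ with no spurious or missing hyperplanes, which requires tracking how the $H_k$'s and the sums $\ker(x_j + \cdots + x_{k-1})$ interact; second, the identification $(\CB_n', \pi') \cong (\CB_{n-1}, \pi^{n-1})$, which is not literally an equality but a linear isomorphism, so care is needed in matching block by block (in particular checking that deleting $\ker(x_0)$ from $\pi_1$ leaves the block structure $\pi_2,\ldots,\pi_n$ matching $\pi^{n-1}_2,\ldots,\pi^{n-1}_n$ under a relabeling of indices). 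Once these two structural facts are pinned down, the rest is the formal machinery of Theorem \ref{thm:add-del-factored} and a direct reading of exponents, and I would present the whole argument compactly as an induction table of factorizations as advertised. $\hfill\qed$
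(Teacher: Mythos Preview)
Your approach has a genuine gap: the identification $(\CB_n',\pi')\cong(\CB_{n-1},\pi^{n-1})$ cannot hold. After deleting $H_0=\ker(x_0)$, the arrangement $\CB_n'$ is still essential of rank $n$ --- it contains $\ker(x_1),\ldots,\ker(x_{n-1})$ and $\ker(x_0+x_1)$, whose common intersection is $0$ --- whereas $\CB_{n-1}$ has rank $n-1$; a hyperplane count confirms the mismatch. More to the point, removing the unique member of the singleton block $\pi_1$ leaves an induced partition $\pi'$ with only $n-1$ blocks on a rank-$n$ arrangement, which by Corollary~\ref{cor:teraofactored}(i) can never be nice. The same obstruction shows up on the restriction side: the map $\R_{\pi,H_0}$ is not injective, since for every $k\ge 2$ both $\ker(x_0+x_1+\cdots+x_{k-1})$ and $H_{k-1}=\ker(2x_0+x_1+\cdots+x_{k-1})$ restrict on $x_0=0$ to the same hyperplane $\ker(x_1+\cdots+x_{k-1})$. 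So neither hypothesis of Theorem~\ref{thm:add-del-factored} is available along this triple.

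The paper runs the induction differently: it starts from the supersolvable (hence inductively factored) arrangement $\CA_0=\CA_{P_n}$ with its standard modular-chain partition $\lambda^0$, and then adds the extra hyperplanes $H_1,\ldots,H_{n-1}$ one at a time, placing $H_i$ into the block $\lambda_{i+1}$. At the step $(\CA_{i-1},\CA_i)$ the distinguished hyperplane is the newly added $H_i$, and the restriction $\CA_i^{H_i}$ is checked to be $\CB_{n-1}$ with its partition $\pi^{n-1}$; this is where the induction on $n$ enters. The deletion $\CA_{i-1}$ is inductively factored by the induction on $i$. Thus the inductive chain lives among the $H_i$, not through $\ker(x_0)$, and the rank never drops along the way.
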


\begin{proof}
	We aim to show that $H_1,\ldots,H_{n-1}$  forms an inductive chain (in that order) from the supersolvable arrangement $\CA_{P_n}$ to $\CB_n$. For that purpose define $\CA_0:=\CA_{P_n}$ and $\CA_i := \CA_{P_n} \cup \{H_1,\ldots, H_i\}$ for $i = 1, \ldots, n-1$. Both arrangements $\CB_2$ and $\CA_0$ are inductively factored. We proceed by induction on $n$ and $i$, assuming that $\CB_{n-1}$ and $\CA_{i-1}$ are inductively factored, with $\lambda=\lambda^{i-1}$ an inductive factorization for $\CA_{i-1}$ given by:
	\begin{align*}
		\lambda_k & :=  \pi_k \ \text{for}\ k=1, \ldots,i-1, \\
		\lambda_{k} &:= \{\ker(x_j +\ldots+ x_{k-1}) \mid j = 0,\ldots ,k-1 \}\ \text{for}\  k=i, \ldots,n.
	\end{align*}

	Note that after a change of coordinates for $\CA_{P_n}$, like the one in \cite[Ex.~1.2]{cuntzkuehne:subgraphs}, the partition $\lambda^0$ given by
	\begin{align*}
		\lambda_1^0 & :=  \pi_1, \\
		\lambda_{k}^0 &:=  \{\ker(x_j +\ldots+ x_{k-1}) \mid j = 0,\ldots ,k-1 \}\ \text{for}\ k = 2, \ldots, n,
	\end{align*}
	is the inductive factorization for the rank $n$ braid arrangement corresponding to the maximal chain of modular flats $$\{x_0=x_1\}<\{x_0=x_1=x_2\}<\cdots < \{x_0=x_1=\cdots =x_n\}.$$

	Then one checks that the restriction to $H_{i-1}$ yields the arrangement $\CB_{n-1}$ with the inductive partition $\pi^{n-1}$. Similarly, thanks to the chosen factorization, one sees by inspection that the map $\varrho\colon \CA_{i-1} \setminus \lambda_{i-1} \rightarrow \CA_{i-1}^{''}$ is bijective for each $i$.  Consequently,
	$\pi$ is an inductive factorization of $\CB_n$.
	We give the induction table for the inductive factorization in Table \ref{table0} below.
\end{proof}

	\begin{table}[ht!b]
		\renewcommand{\arraystretch}{1.5}
		\begin{tabular}{lllll}\hline
			$(\CA_i',\pi_i')$ &  $\exp\CA_i'$ & $\alpha_{H_i}$ & $(\CA_i'', \pi_i''$) & $\exp\CA_i''$\\
			\hline\hline
			$(\CA_0,\lambda^{0})$ &  $1,2,3,4, \ldots,n $ &  $2x_0+x_1$ & $(\CB_{n-1},\pi^{n-1})$ & $1,3,4,\ldots,n-1$ \\
			$(\CA_1,\lambda^{1})$ &  $1,3,3,4, \ldots,n $ &  $2x_0+x_1+x_2$ & $(\CB_{n-1},\pi^{n-1})$ & $1,3,4,\ldots,n-1$ \\
			\vdots & \vdots  & \vdots & \vdots  & \vdots  \\
			$(\CA_{n-2},\lambda^{n-2})$ &  $1,3,4,5, \ldots,n,n $ &  $2x_0+x_1+\cdots+x_{n-1}$ & $(\CB_{n-1},\pi^{n-1})$ & $1,3,4,\ldots,n-1$ \\
			$(\CB_n,\pi^{n})$ &  $1,3,4 \ldots,n+1$ & & &  \\

			\hline
		\end{tabular}
		\smallskip
		\caption{Induction Table of Factorizations for $\CB_n$}
		\label{table0}
	\end{table}

\begin{proof}[Proof of Theorem \ref{thm:factoredAG2}]
	Recall from \cite[Thm.~5.4]{cuntzkuehne:subgraphs} that $\CA_G$ is free  for $G =\Delta_{n,1}$  with
	$\exp(\CA_G) = (1,3,4, \ldots ,n+1, n+1)$.
	We claim that the following partition $\pi = \pi^{n+1}$ is an inductive factorization for $\CA_{\Delta_{n,1}}$:
	\begin{align*}
		\label{eq:piDelta}
		\pi_1 & :=  \{\ker(x_0 + x_1)\},\\
		\pi_k & :=  \{ \ker(x_i +\ldots+ x_k) \mid i = 0,\ldots ,k \} \ \text{for}\ k=2, \ldots,n;\  (k+1\  \text{hyperplanes}),\\
		\pi_{n+1} & :=  \{\ker(x_0 + x_2 + \ldots + x_k) \mid k = 2,\ldots,n \} \cup \{ \ker(x_0), \ker(x_1) \}\ (n+1\ \text{hyperplanes}).
	\end{align*}

	Note that this is identical to the partition in the proof of \cite[Prop.~8.9]{cuntzkuehne:subgraphs}. We argue by induction on $n \ge 2$. For $ n = 2$ one checks that $\pi$ is inductive. By induction we assume that $\pi$ given above is an inductive factorization for $\CA_{\Delta_{n-1,1}}$.

		We claim that  $H_1,\ldots, H_{n+2}$  is an inductive chain (in that order) from $\CA_{\Delta_{n-1,1}}$ to $\CA_{\Delta_{n,1}}$, where:
	\begin{align*}
		H_i &= \ker(x_{i-1}+\cdots +x_n)\ \text{for } i = 1,\ldots, n+1 \\
		H_{n+2} &= \ker(x_0+x_2+\cdots+x_n).
	\end{align*}

	For $i=1,\ldots, n+1$, let $\CA_i := \CA_{\Delta_{n-1,1}} \bigcup \{H_1,\ldots, H_i \}$ and $\lambda=\lambda^{i}$ be the corresponding partition obtained from the partition $\pi = \pi^{n-1}$ of $\CA_{\Delta_{n-1,1}}$ as follows:
	\begin{align*}
		\lambda_k & :=  \pi_k \ \text{for}\ 1\leq k\leq n-1 \\
		\lambda_{n} &:= \{H_1,\ldots, H_i\}\\
		\lambda_{n+1} &:= \pi_{n};
	\end{align*}
	the last hyperplane is added to the block $\lambda_{n+1}$, unlike the others, yielding the arrangement $\CA_{n+2}=\CA_{\Delta_{n,1}}$ and its inductive partition $\lambda^{n+2}=\pi^n$. \\

	In the first $n+1$ addition steps the restriction arrangement is always $\CA_{\Delta_{n-1,1}}$ with the respective inductive factorization $\pi^{n-1}$, and for example by using \cite[Lem.~2.10]{cuntzkuehne:subgraphs}, the map $\varrho$ can be seen to be always bijective.

	For the last hyperplane, the restriction is different: we leave the class of connected subgraph arrangements, but just for one hyperplane: $\CA_{\Delta_{n,1}}^{''} \cong \CA_{\Delta_{n-1,1}} \cup \{\ker(x_0-x_1)\}$, with the associated partition $(\pi^n)''$ obtained by adding the extra hyperplane $\ker(x_0-x_1)$ to the $(n-1)$-st (and penultimate) block of the inductive partition $\pi^{n-1}$ of $\CA_{\Delta_{n-1,1}}$. \\

	Now, we prove that this arrangement is inductively factored. By choosing as distinguished hyperplane $\ker(x_0-x_1)$, clearly, the deletion is  $\CA_{\Delta_{n-1,1}}$, and the restriction turns out to be the arrangement $\CB_{n-1}$ with the respective inductive factorization from Lemma \ref{lem:factoredAG2}. Hence, thanks to this lemma  we are done. See Table \ref{table1} for the induction table of this inductive factorization of $\CA_{\Delta_{n,1}}$.
\end{proof}

	\begin{table}[ht!b]
		\renewcommand{\arraystretch}{1.5}
		\begin{tabular}{lllll}\hline
			$(\CA_i',\pi_i')$ &  $\exp\CA_i'$ & $\alpha_{H_i}$ & $(\CA_i'', \pi_i''$) & $\exp\CA_i''$\\
			\hline\hline
			$(\CA_{\Delta_{n-1,1}},\pi)$ &  $1,3,4, \ldots, n, n$ &  $x_0+\cdots +x_n$ & $(\CA_{\Delta_{n-1,1}},\pi)$ & $1,3,4,\ldots, n,n$ \\
			$(\CA_1,\lambda^{1})$ &  $1,3,4, \ldots, n, 1,n$ &  $x_1+\cdots +x_n$ & $(\CA_{\Delta_{n-1,1}},\pi)$ & $1,3,4,\ldots, n,n$ \\
			\vdots & \vdots & \vdots & \vdots & \vdots \\
			$(\CA_n,\lambda^{n})$ &  $1,3,4, \ldots, n,n,n$ &  $x_n$ & $(\CA_{\Delta_{n-1,1}},\pi)$ & $1,3,4,\ldots, n,n$ \\
			$(\CA_{n+1},\lambda^{n+1})$ &  $1,3,4, \ldots, n, n+1,n$ &  $x_0+x_2+\cdots +x_n$ & $(\CA_{\Delta_{n,1}}^{''},\pi^{''})$ & $1,3,4,\ldots, n+1,n$ \\
			$(\CA_{\Delta_{n,1}},\pi)$ &  $1,3,4, \ldots, n+1, n+1$ & & &  \\

			\hline
		\end{tabular}
		\smallskip
		\caption{Induction Table of Factorizations for $\CA_{\Delta_{n,1}}$}
		\label{table1}
	\end{table}

\section{Aspherical $\CA_G$: Proof of Theorem \ref{thm:kpi1}}
\label{s:thm:kpi1}

\begin{remark}
	\label{rem:local}
	Thanks to an observation by Oka,
	asphericity is a local property in the sense of
	\S \ref{sub:local},
	e.g., see \cite[Lem.~1.1]{paris:deligne}.
\end{remark}

We recall the notion of a generic arrangement from \cite[Def.~5.22]{orlikterao:arrangements}.

\begin{remark}
	\label{rem:generic}
An 
$\ell$-arrangement $\CA$ with $\rank(\CA)=r$ is called \emph{generic} if 
every subarrangement $\CB$ of $\CA$ of cardinality $r$ is linearly independent and $|\CA| > r$, \cite[Def.~5.22]{orlikterao:arrangements}.
Due to work of Hattori, 
generic arrangements are not aspherical, cf.~\cite[Cor.~5.23]{orlikterao:arrangements}.
\end{remark}
Next we recall the ``simple triangle`` condition of Falk and Randell \cite[Cor.~3.3, (3.12)]{falkrandell:homotopy}.

\begin{defn}
	\label{def:simpleT}
	Let $\CA$ be an arrangement in $\BBR^3$. 
	Then $\CA$ is said to have a \emph{simple triangle} (cf.\ Figure \ref{fig:SimpT_G1}) if there is a chamber $C$, 
	i.e., a connected component of $\BBR^3 \setminus \cup_{H \in \CA}H$,
	such that:
	\begin{enumerate}[(i)]
		\item $|\CA|\geq 4$,
		\item $C$ is simplicial, i.e., a cone over a triangle, and
		\item if $\{H_1,H_2,H_3\}$ are the walls of $C$, then $\CA_{H_i\cap H_j} = \{H_i,H_j\}$ for $1\leq i<j\leq3$.
	\end{enumerate}
\end{defn}	

The notion of a simple triangle is linked to asphericity as follows.

\begin{lemma}[{\cite[Cor.~3.3]{falkrandell:homotopy}}]
	\label{lem:SimpleT}
	Let $\CA$ be an arrangement  in $\BBR^3$ admitting a simple triangle.
	Then (the complexified) arrangement $\CA$ is not $K(\pi,1)$.
\end{lemma}

The following is the list from \cite{cuntzkuehne:subgraphs}
of minimal graphs $G$ subject to $\CA_G$ being not free.

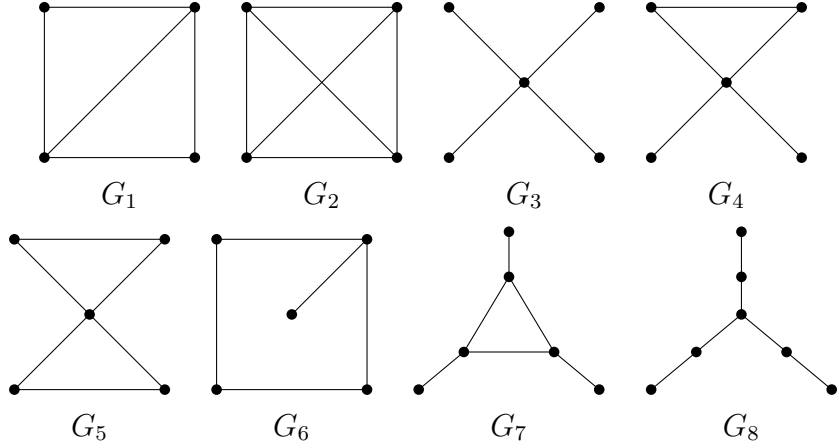
\begin{figure}[H]
	\begin{tikzpicture}
		\node (a1) at (2,2) {};
		\node (a2) at (0,2)  {};
		\node (a3) at (0,0)  {};
		\node (a4) at (2,0)  {};
		\foreach \from/\to in {a1/a2,a2/a3,a3/a4,a1/a4,a1/a3}
		\draw ($(\from)$) -- ($(\to)$);
		\fill ($(a1)$) circle[radius=2pt];
		\fill ($(a2)$) circle[radius=2pt];
		\fill ($(a3)$) circle[radius=2pt];
		\fill ($(a4)$) circle[radius=2pt];
		\node at (1,-0.5) {$G_1$};
	\end{tikzpicture}\quad 
	\begin{tikzpicture}
		\node (a1) at (2,2) {};
		\node (a2) at (0,2)  {};
		\node (a3) at (0,0)  {};
		\node (a4) at (2,0)  {};
		\foreach \from/\to in {a1/a2,a2/a3,a3/a4,a1/a4,a1/a3,a2/a4}
		\draw ($(\from)$) -- ($(\to)$);
		\fill ($(a1)$) circle[radius=2pt];
		\fill ($(a2)$) circle[radius=2pt];
		\fill ($(a3)$) circle[radius=2pt];
		\fill ($(a4)$) circle[radius=2pt];
		\node at (1,-0.5) {$G_2$};
	\end{tikzpicture}\quad
	\begin{tikzpicture}
		\node (a1) at (0,0) {};
		\node (a2) at (1,1)  {};
		\node (a3) at (-1,1)  {};
		\node (a4) at (-1,-1)  {};
		\node (a5) at (1,-1)  {};
		\foreach \from/\to in {a1/a2,a1/a3,a1/a4,a1/a5}
		\draw ($(\from)$) -- ($(\to)$);
		\fill ($(a1)$) circle[radius=2pt];
		\fill ($(a2)$) circle[radius=2pt];
		\fill ($(a3)$) circle[radius=2pt];
		\fill ($(a4)$) circle[radius=2pt];
		\fill ($(a5)$) circle[radius=2pt];
		\node at (0,-1.5) {$G_3$};
	\end{tikzpicture}\quad
	\begin{tikzpicture}
		\node (a1) at (0,0) {};
		\node (a2) at (1,1)  {};
		\node (a3) at (-1,1)  {};
		\node (a4) at (-1,-1)  {};
		\node (a5) at (1,-1)  {};
		\foreach \from/\to in {a1/a2,a1/a3,a1/a4,a1/a5,a2/a3}
		\draw ($(\from)$) -- ($(\to)$);
		\fill ($(a1)$) circle[radius=2pt];
		\fill ($(a2)$) circle[radius=2pt];
		\fill ($(a3)$) circle[radius=2pt];
		\fill ($(a4)$) circle[radius=2pt];
		\fill ($(a5)$) circle[radius=2pt];
		\node at (0,-1.5) {$G_4$};
	\end{tikzpicture}
	
	\begin{tikzpicture}
		\node (a1) at (0,0) {};
		\node (a2) at (1,1)  {};
		\node (a3) at (-1,1)  {};
		\node (a4) at (-1,-1)  {};
		\node (a5) at (1,-1)  {};
		\foreach \from/\to in {a1/a2,a1/a3,a1/a4,a1/a5,a2/a3,a4/a5}
		\draw ($(\from)$) -- ($(\to)$);
		\fill ($(a1)$) circle[radius=2pt];
		\fill ($(a2)$) circle[radius=2pt];
		\fill ($(a3)$) circle[radius=2pt];
		\fill ($(a4)$) circle[radius=2pt];
		\fill ($(a5)$) circle[radius=2pt];
		\node at (0,-1.5) {$G_5$};
	\end{tikzpicture}\quad
	\begin{tikzpicture}
		\node (a5) at (0,0) {};
		\node (a1) at (1,1)  {};
		\node (a2) at (-1,1)  {};
		\node (a3) at (-1,-1)  {};
		\node (a4) at (1,-1)  {};
		\foreach \from/\to in {a1/a2,a2/a3,a3/a4,a1/a4,a1/a5}
		\draw ($(\from)$) -- ($(\to)$);
		\fill ($(a1)$) circle[radius=2pt];
		\fill ($(a2)$) circle[radius=2pt];
		\fill ($(a3)$) circle[radius=2pt];
		\fill ($(a4)$) circle[radius=2pt];
		\fill ($(a5)$) circle[radius=2pt];
		\node at (0,-1.5) {$G_6$};
	\end{tikzpicture}\quad
	\begin{tikzpicture}
		\node (a1) at (0,1) {};
		\node (a2) at (-0.6,0) {};
		\node (a3) at (0.6,0) {};
		\node (a4) at (0,1.6) {};
		\node (a5) at (-1.2,-0.5) {};
		\node (a6) at (1.2,-0.5) {};
		\foreach \from/\to in {a1/a2,a1/a3,a2/a3,a1/a4,a2/a5,a3/a6}
		\draw ($(\from)$) -- ($(\to)$);
		\fill ($(a1)$) circle[radius=2pt];
		\fill ($(a2)$) circle[radius=2pt];
		\fill ($(a3)$) circle[radius=2pt];
		\fill ($(a4)$) circle[radius=2pt];
		\fill ($(a5)$) circle[radius=2pt];
		\fill ($(a6)$) circle[radius=2pt];
		\node at (0,-1) {$G_7$};
	\end{tikzpicture}\quad
	\begin{tikzpicture}
	\node (a1) at (0,0.5) {};
	\node (a2) at (0,1) {};
	\node (a3) at (-0.6,0) {};
	\node (a4) at (0.6,0) {};
	\node (a5) at (0,1.6) {};
	\node (a6) at (-1.2,-0.5) {};
	\node (a7) at (1.2,-0.5) {};
	\foreach \from/\to in {a1/a2,a1/a3,a1/a4,a2/a5,a3/a6,a4/a7}
	\draw ($(\from)$) -- ($(\to)$);
	\fill ($(a1)$) circle[radius=2pt];
	\fill ($(a2)$) circle[radius=2pt];
	\fill ($(a3)$) circle[radius=2pt];
	\fill ($(a4)$) circle[radius=2pt];
	\fill ($(a5)$) circle[radius=2pt];
	\fill ($(a6)$) circle[radius=2pt];
	\fill ($(a7)$) circle[radius=2pt];
	\node at (0,-1) {$G_8$};
	\end{tikzpicture}
	\caption{The graphs $G_1$ up to $G_8$}
	\label{fig:G1_8}
\end{figure}

We show next that $\CA_G$ is not $K(\pi, 1)$ for each $G$ in Figure \ref{fig:G1_8} with the possible exception of $G_2 = K_4$. We begin with $G_1$:

\begin{prop}
\label{prop:g1}
  The connected subgraph arrangement $\CA=\CA_{G_1}$ stemming from the graph
  \begin{center}
  \begin{tikzpicture}
    [
    every node/.style={draw, circle, scale=1, inner sep=2pt},
    font=\scriptsize
    ]
    \node (1) at (0,0) {1};
    \node (2) at (1,0) {2} edge (1);
    \node (3) at (1,1) {3} edge (2);
    \node (4) at (0,1) {4} edge (1) edge (3) edge (2);
  \end{tikzpicture}
  	{$G_1$}
  \end{center}
  is not $K(\pi, 1)$.
\end{prop}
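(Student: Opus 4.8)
The strategy is to exhibit an explicit topological or combinatorial obstruction to asphericity. The arrangement $\CA_{G_1}$ lives in $\BBK^4$, and since connected subgraph arrangements are essential (the hyperplanes $\ker x_1,\dots,\ker x_4$ already cut out the origin), we have $\rank(\CA_{G_1}) = 4$. The first task is to list the hyperplanes: $H_I$ ranges over all $\varnothing \ne I \subseteq [4]$ with $G_1[I]$ connected. With the edges of $G_1$ being $\{12,23,14,13,24\}$ -- wait, one must read off the correct edge set from the picture: vertex $4$ is joined to $1,2,3$ and there is the path $1\!-\!2\!-\!3$, so the edges are $\{12,23,14,24,34\}$ and the non-edge is $\{13\}$. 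Hence the connected subsets $I$ are: all four singletons; all pairs except $\{1,3\}$; all triples except $\{1,2,3\}$ is \emph{not} excluded since $\{1,2,3\}$ induces the path $1\!-\!2\!-\!3$ which is connected, so actually all four triples are connected as long as they don't reduce to the disconnected induced graph on $\{1,3\}$ plus something -- a triple $\{1,3,x\}$ is connected iff $x\in\{2,4\}$, both present, so all four triples qualify; and $[4]$ itself. That gives $4 + 5 + 4 + 1 = 14$ hyperplanes.

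**Main step.** The cleanest route is to pass to a localization or restriction of rank $3$ and show \emph{that} sub-object is not $K(\pi,1)$, invoking that asphericity is a local property (Remark \ref{rem:local}, Lemma \ref{lem:local}(i)); however, localizations of $\CA_{G_1}$ are the $\CA_{G_1[S]}$, and for $|S|=3$ these are all free (the minimal non-free graph has $4$ nodes), so localization alone cannot produce the obstruction -- the obstruction genuinely lives at rank $4$. Instead I would look at a rank-$2$ restriction $\CA^X$ or, more promisingly, invoke the Hattori-type criterion via Remark \ref{rem:generic}: if one can find $X \in L(\CA_{G_1})$ such that the restriction $\CA_{G_1}^X$ is a generic arrangement of rank $r$ with more than $r$ hyperplanes, then $\CA_{G_1}^X$ is not aspherical, and since restriction to an aspherical arrangement's flat is again aspherical (by Oka's observation combined with the fact that $\CM(\CA^X)$ embeds appropriately -- actually one uses that $\CA^X$ is a localization of $\CA$ in a suitable projective sense, or directly that asphericity passes to the arrangement restricted to any flat via the fibration/local structure). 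Concretely, I expect to choose $X$ a line (one-dimensional flat), giving a rank-$3$ restriction $\CA_{G_1}^X$, and then check by direct computation of the intersection lattice of $\CA_{G_1}^X$ that it is the non-Fano-type or a near-pencil-free configuration that is generic, or more simply that its Poincaré polynomial does not factor, which rules out the restriction being supersolvable/fiber-type, but one needs more than that for non-$K(\pi,1)$.

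**The real argument.** Since none of the soft obstructions above is guaranteed to bite, I anticipate the proof actually proceeds by a \emph{direct} analysis: compute $\pi_1(\CM(\CA_{G_1}))$ or, more practically, compute the third Betti number or the ranks of the lower central series quotients and compare with what asphericity would force. The cleanest classical tool is: for an aspherical arrangement the Poincaré polynomial of $\CM(\CA)$, which is always $\pi(\CA,t)$, must have only \emph{nonnegative} coefficients when written as $\prod$ ... no, that always holds. The usable fact is the lower central series / LCS formula, or Falk's result that for a $K(\pi,1)$ arrangement certain resonance varieties must be ``tame''. I would compute the resonance variety $\mathcal{R}^1(\CA_{G_1})$ from the intersection lattice: it has a nonlocal component (this is typical for $\CA_{G_1}$, which contains a ``braid-like'' sub-configuration together with an extra hyperplane breaking fiber-type-ness), and a nonlocal component of $\mathcal{R}^1$ obstructs the arrangement from being $K(\pi,1)$ via the work of Falk--Randell (formality) combined with... actually the sharp statement is that a nonlocal essential component of $\mathcal{R}^1$ of dimension $\ge 2$ forces failure of the LCS formula, hence non-$K(\pi,1)$ for fiber-type, but in general one needs the stronger input.

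**The obstacle.** The hard part will be choosing which of these criteria actually applies and doing the lattice bookkeeping correctly -- specifically, identifying a flat $X$ of $\CA_{G_1}$ whose restriction is a generic rank-$3$ arrangement with $\ge 4$ hyperplanes (if such exists, Remark \ref{rem:generic} plus locality of asphericity finishes it immediately), and if no such flat exists, falling back to a resonance-variety or explicit $\pi_1$ computation. I expect the intended proof takes the first route: exhibit $X$ with $\CA_{G_1}^X$ generic of rank $3$ with $4$ or $5$ hyperplanes, so that Hattori's theorem applies, and then invoke that asphericity of $\CA_{G_1}$ would descend to $\CA_{G_1}^X$ (via Remark \ref{rem:local} suitably interpreted for restrictions, or because $\CA^X$ is combinatorially a localization after coning), contradicting Remark \ref{rem:generic}.
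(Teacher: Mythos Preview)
Your proposal has a genuine gap at the very first branching point. You write that ``localizations of $\CA_{G_1}$ are the $\CA_{G_1[S]}$'' and therefore dismiss the localization route. This is false: Lemma \ref{lem:local}(i) only says that each $\CA_{G[S]}$ \emph{is} a localization, not that every localization arises this way. There are many rank-$3$ flats $X\in L(\CA_{G_1})$ that are not of the form $\bigcap_{i\in S}\ker x_i$, and the paper's proof uses exactly one of these.

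Concretely, the paper takes
\[
X \;=\; H_{\{2\}}\cap H_{\{4\}}\cap H_{[4]} \;=\; \{x_2=x_4=x_1+x_3=0\},
\]
a rank-$3$ flat, and computes that the localization $\CA_X$ is linearly isomorphic to the rank-$3$ arrangement $\CB$ with
\[
Q(\CB)=yz(x+y)(x+z)(y+z)(x+y+z).
\]
This $\CB$ has six hyperplanes and is \emph{not} generic (there are rank-$2$ flats of multiplicity $3$), so your Hattori route via Remark \ref{rem:generic} does not apply. Instead the paper embeds $\CB$ in a one-parameter family $\CB(t)$, observes that for real $t>2$ the arrangement $\CB(t)$ has a \emph{simple triangle} in the sense of Falk--Randell, and then uses Randell's lattice-isotopy theorem to transport the non-asphericity of $\CB(4)$ back to $\CB(0)=\CB$. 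Locality of asphericity (Remark \ref{rem:local}) then finishes the argument. None of the mechanisms you considered---generic restrictions, resonance varieties, LCS formula---enter.

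A secondary issue: you repeatedly conflate restrictions $\CA^X$ and localizations $\CA_X$. Asphericity is inherited by \emph{localizations} (Oka's observation, Remark \ref{rem:local}), not by restrictions; your plan to pass to a restriction $\CA_{G_1}^X$ and invoke locality would not be justified even if such a restriction were generic.
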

\begin{proof}
  Consider the rank $3$ localization $\CB'=\CA_X$ for the subspace
  $$X= H_{2}\cap H_{4}\cap H_{[4]} = \{x_2=x_4=x_1+
  x_3 = 0\}\,.$$
  This arrangement is defined by
  $$Q(\CB') = x_2x_4(x_2+x_4)(x_1+x_2+x_3)(x_1+x_3+x_4)(x_1+x_2+x_3+x_4)$$
  which is linearly isomorphic to the essential $3$-arrangement $\CB$ defined by
  $$Q(\CB) = yz(x+y)(x+z)(y+z)(x+y+z)$$
  under the linear transformation
  $\begin{pmatrix}x_1+x_3 \\ x_2 \\ x_4 \end{pmatrix}
  \mapsto
  \begin{pmatrix} x\\y\\z \end{pmatrix}$.
  For $t\in \BBC$, consider the arrangement $\CB(t)$ defined by
  $Q(\CB(t)) = yz(x+y+tz)(x+z)(y+z)(x+y+z)\,.$
  We have $\CB = \CB(0)$ and $\CB \cong \CB(t)$ for all $t\in \BBC \setminus \{1,2\}$.
  Furthermore, for $t$ real and $t>2$, the arrangement $\CB(t)$ has a ``simple triangle''.
  Defining the $1$-parameter family $\Big(\CB(2(e^{t\pi i}+1)) \mid 1\leq t \leq 2\Big)$, we observe that this is a \emph{lattice isotopy} in the sense of \cite{randell:lattice-isotopy}.
  Thus, the arrangements $\CB = \CB(0) = \CB(2(e^{\pi i}+1))$ and $\CB(4) = \CB(2(e^{2\pi i}+1))$ have diffeomorphic complements.
  Since $\CB(4)$ admits a simple triangle, and thus is not 
  $K(\pi, 1)$, owing to Lemma \ref{lem:SimpleT}, also 
  $\CB$ is not $K(\pi, 1)$.
  Since $\CB$ is linearly isomorphic to $\CA_X$, the latter is not $K(\pi, 1)$. Finally, it follows from Remark \ref{rem:local}
  that neither $\CA$ is $K(\pi, 1)$.
\end{proof}

\begin{remark}
	\label{rem:g7g8}
	It is shown in (the proof of) \cite[ Prop.~3.1]{mueckschroehrlewiesner} that for $G = G_7$ and $G_8$, $\CA_G$ admits a generic localization of rank $4$ and $5$, respectively. It follows from Remarks \ref{rem:local}  and \ref{rem:generic} that $\CA_G$ is not $K(\pi,1)$ either.
\end{remark}

\begin{figure}[h]
  \begin{tikzpicture}
    [scale=0.5]
    \draw (-4.5,0) -- (4.5,0);
    \draw (-1,-4) -- (-1,4);
    \draw (-4.5,-1) -- (4.5,-1);
    \draw (-3.5, 3.5) -- (3.5, -3.5);
    \draw (-3.5, 2.5) -- (3, -4);
    \draw (0,0) circle [radius=5.5];
  \end{tikzpicture}
  \quad
  \begin{tikzpicture}
    [scale=0.5]
    \draw (-4.5,0) -- (4.5,0);
    \draw (-1,-4) -- (-1,4);
    \draw (-4.5,-1) -- (4.5,-1);
    \draw (-5, 1) -- (0.5, -4.5);
    \draw (-3.5, 2.5) -- (3, -4);
    \draw (0,0) circle [radius=5.5];
    \draw[fill=gray!42] (-1,-1) -- (-1, -3) -- (-3, -1) -- (-1, -1);
  \end{tikzpicture}
        \caption{Projective pictures of the arrangements $\CB=\CB(0)$ and $\CB(4)$ for $z=1$ with the ``simple triangle'' shaded in.}
        \label{fig:SimpT_G1}
\end{figure}
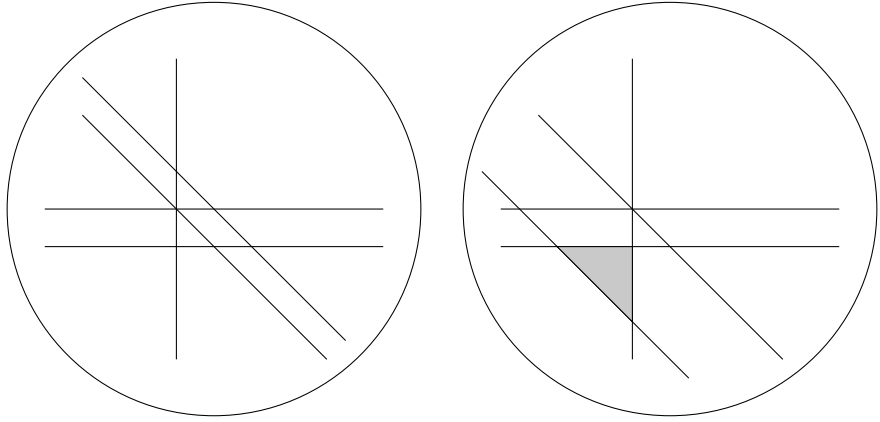

We note that the arrangement $\CB$ above is linearly isomorphic to the rank $3$ arrangement utilized in 
\cite[Lem.~2.1]{amendmoellerroehrle:aspherical}.

We discuss further obstructions to the $K(\pi,1)$ property for the $\CA_G$ and give 
additional minimal configurations of connected graphs $G$ for which $\CA_G$ fails to be aspherical in Figure \ref{fig:SC-nP}.
Note that these include all connected graphs $G$ with $5$ nodes which contain $K_4$ as a subgraph ($K_5$, $G_{16}$, $G_{17}$, $G_{18}$). These obstructions also stem from the presence of a
localization of rank $3$ which satisfies
the ``simple triangle'' condition from Definition \ref{def:simpleT}.

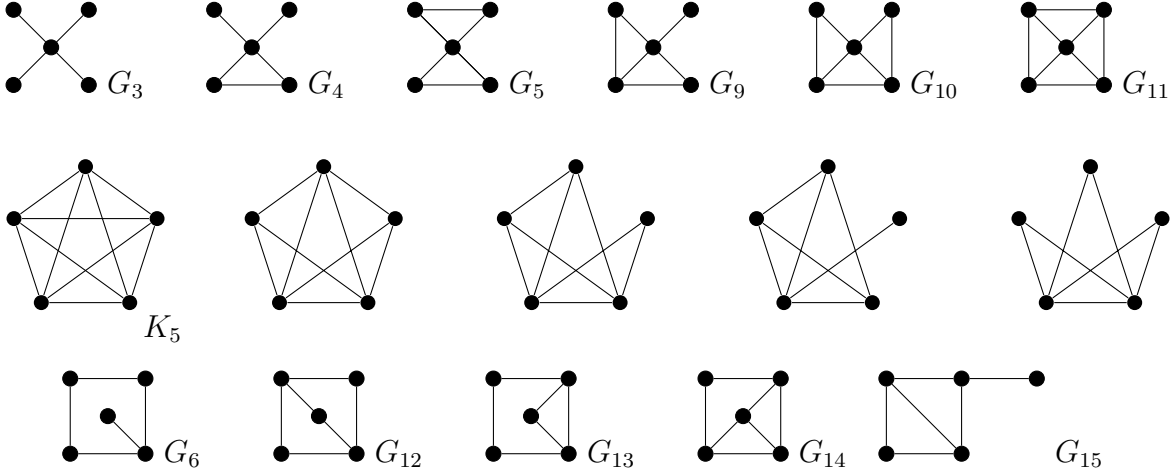
\begin{figure}[htbp]
	\centering
	\begin{tikzpicture}
		\draw (0,0) node[v](1){};
		\draw (1,0) node[v](2){};
		\draw (0.5,0.5) node[v](3){};
		\draw (0,1) node[v](4){};
		\draw (1,1) node[v](5){};
		\draw (4)--(2);
		\draw (5)--(1);
	\end{tikzpicture}
	$G_3$\qquad
	\centering
	\begin{tikzpicture}
		\draw (0,0) node[v](1){};
		\draw (1,0) node[v](2){};
		\draw (0.5,0.5) node[v](3){};
		\draw (0,1) node[v](4){};
		\draw (1,1) node[v](5){};
		\draw (1)--(2);
		\draw (4)--(2);
		\draw (5)--(1);
	\end{tikzpicture}
	$G_4$\qquad
	\begin{tikzpicture}
		\draw (0,0) node[v](1){};
		\draw (1,0) node[v](2){};
		\draw (0.5,0.5) node[v](3){};
		\draw (0,1) node[v](4){};
		\draw (1,1) node[v](5){};
		\draw (1)--(2)--(4)--(5);
		\draw (4)--(2);
		\draw (5)--(1);
	\end{tikzpicture}
	$G_5$\qquad
 \begin{tikzpicture}
		\draw (0,0) node[v](1){};
		\draw (1,0) node[v](2){};
		\draw (0.5,0.5) node[v](3){};
		\draw (0,1) node[v](4){};
		\draw (1,1) node[v](5){};
		\draw (1)--(2)--(4);
		\draw (5)--(1);
		\draw (4)--(1);
	\end{tikzpicture}
	$G_9$\qquad
	\begin{tikzpicture}
		\draw (0,0) node[v](1){};
		\draw (1,0) node[v](2){};
		\draw (0.5,0.5) node[v](3){};
		\draw (0,1) node[v](4){};
		\draw (1,1) node[v](5){};
		\draw (1)--(2);
		\draw (4)--(1);
		\draw (4)--(2);
		\draw (5)--(1);
		\draw (5)--(2);
	\end{tikzpicture}
	$G_{10}$\qquad
	\begin{tikzpicture}
		\draw (0,0) node[v](1){};
		\draw (1,0) node[v](2){};
		\draw (0.5,0.5) node[v](3){};
		\draw (0,1) node[v](4){};
		\draw (1,1) node[v](5){};
		\draw (1)--(2)--(5)--(4)--(1);
		\draw (4)--(2);
		\draw (5)--(1);
	\end{tikzpicture}
	$G_{11}$\\
	\bigskip
	\begin{tikzpicture}[bullet/.style={circle, fill, inner sep=2pt}]
		\foreach \lab [count=\c,
		evaluate=\c as \ang using {18+72*\c}]
		in {, , , , } {
			\node[bullet] (\c) at (\ang:10mm) {};
			\node at (\ang:14mm){$\lab$};
			\foreach \i in {1,...,\c} {
				\draw(\i)--(\c);
			}
		}
	\end{tikzpicture}\hglue-20pt $K_5$\quad
	\begin{tikzpicture}[bullet/.style={circle, fill, inner sep=2pt}]
		\foreach \lab [count=\c, evaluate=\c as \ang using {18+72*\c}] in {, , , , } {
			\node[bullet] (\c) at (\ang:10mm) {};
			\node at (\ang:14mm){$\lab$};
		}
		\draw(5)--(1)--(2)--(3)--(4)--(5)--(3)--(1)--(4)--(2);
	\end{tikzpicture}\hglue-20pt $G_{16}$\quad
	\begin{tikzpicture}[bullet/.style={circle, fill, inner sep=2pt}]
		\foreach \lab [count=\c, evaluate=\c as \ang using {18+72*\c}] in {, , , , } {
			\node[bullet] (\c) at (\ang:10mm) {};
			\node at (\ang:14mm){$\lab$};
		}
		\draw (1)--(2)--(3)--(4)--(5)--(3)--(1)--(4)--(2);
	\end{tikzpicture}\hglue-20pt $G_{17}$\quad
	\begin{tikzpicture}[bullet/.style={circle, fill, inner sep=2pt}]
		\foreach \lab [count=\c, evaluate=\c as \ang using {18+72*\c}] in {, , , , } {
			\node[bullet] (\c) at (\ang:10mm) {};
			\node at (\ang:14mm){$\lab$};
		}
		\draw (1)--(2)--(3)--(4);
		\draw (5)--(3)--(1)--(4)--(2);
	\end{tikzpicture}
	\hglue-20pt $G_{18}$\quad
	\begin{tikzpicture}[bullet/.style={circle, fill, inner sep=2pt}]
		\foreach \lab [count=\c, evaluate=\c as \ang using {18+72*\c}] in {, , , , } {
			\node[bullet] (\c) at (\ang:10mm) {};
			\node at (\ang:14mm){$\lab$};
		}
		\draw (2)--(3)--(4)--(5)--(3)--(1)--(4)--(2);
	\end{tikzpicture}
	\hglue-20pt $G_{19}$\\
	\bigskip
	\begin{tikzpicture}
		\draw (0,0) node[v](1){};
		\draw (1,0) node[v](2){};
		\draw (0.5,0.5) node[v](3){};
		\draw (0,1) node[v](4){};
		\draw (1,1) node[v](5){};
		\draw (1)--(2)--(5)--(4)--(1);
		\draw (3)--(2);
	\end{tikzpicture}
	$G_6$
\qquad
	\begin{tikzpicture}
		\draw (0,0) node[v](1){};
		\draw (1,0) node[v](2){};
		\draw (0.5,0.5) node[v](3){};
		\draw (0,1) node[v](4){};
		\draw (1,1) node[v](5){};
		\draw (1)--(2)--(5)--(4)--(1);
		\draw (4)--(2);
	\end{tikzpicture}
	$G_{12}$\qquad
	\begin{tikzpicture}
		\draw (0,0) node[v](1){};
		\draw (1,0) node[v](2){};
		\draw (0.5,0.5) node[v](3){};
		\draw (0,1) node[v](4){};
		\draw (1,1) node[v](5){};
		\draw (1)--(2)--(5)--(4)--(1);
		\draw (3)--(2);
		\draw (3)--(5);
	\end{tikzpicture}
	$G_{13}$\qquad
	\begin{tikzpicture}
		\draw (0,0) node[v](1){};
		\draw (1,0) node[v](2){};
		\draw (0.5,0.5) node[v](3){};
		\draw (0,1) node[v](4){};
		\draw (1,1) node[v](5){};
		\draw (1)--(2)--(5)--(4)--(1);
		\draw (3)--(2);
		\draw (3)--(5);
		\draw (3)--(1);
	\end{tikzpicture}
	$G_{14}$\quad
	\begin{tikzpicture}
		\draw (0,0) node[v](1){};
		\draw (1,0) node[v](2){};
		\draw (0,1) node[v](4){};
		\draw (1,1) node[v](5){};
		\draw (2,1) node[v](6){};
		\draw (1)--(2)--(5)--(4)--(1);
		\draw (4)--(2);
		\draw (5)--(6);
	\end{tikzpicture}
	$G_{15}$ \bigskip
		\caption{Minimal  connected graphs $G$ for which $\CA_G$ fails to be $K(\pi,1)$.}
		\label{fig:SC-nP}
	\end{figure}

\begin{prop}
	\label{prop:nonKpione}
	If $G$ is one of the graphs in Figure \ref{fig:SC-nP}, then $\CA_G$ is not $K(\pi,1)$.
\end{prop}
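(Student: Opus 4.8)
By Remark \ref{rem:local}, asphericity is a local property; hence if $\CA_G$ were $K(\pi,1)$, then so would be every localization $\CA_X$ with $X \in L(\CA_G)$. Thus it suffices to exhibit, for each graph $G$ in Figure \ref{fig:SC-nP}, a localization of $\CA_G$ that fails to be aspherical. By Lemma \ref{lem:local}, applied repeatedly, every arrangement $\CA_{G'}$ with $G'$ obtained from $G$ by a sequence of vertex deletions and edge contractions occurs, up to linear isomorphism, as such a localization; this reduces the statement to identifying, inside each $G$, a rank-$3$ minor $G'$ with $\CA_{G'}$ not $K(\pi,1)$.

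The plan for each such $G'$ is to locate a \emph{simple triangle} in $\CA_{G'}$ in the sense of Falk and Randell \cite[Cor.~3.3, (3.12)]{falkrandell:homotopy}. Concretely, I would first list the connected induced subgraphs of $G'$ and record $Q(\CA_{G'})$; then single out three hyperplanes $H_{I_1}, H_{I_2}, H_{I_3} \in \CA_{G'}$ whose pairwise intersections are three distinct rank-$2$ flats and whose common intersection has rank $3$, so that projectively they bound a genuine triangle; and finally verify that each vertex $H_{I_i} \cap H_{I_j}$ is a double point of $\CA_{G'}$, i.e.\ that no further connected-subgraph hyperplane passes through it -- which comes down to checking that the relevant subsets of nodes of $G'$ are disconnected. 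Since $\CA_{G'}$ is a $0/1$-arrangement over $\BBQ$, hence a complexified real arrangement, \cite[Cor.~3.3, (3.12)]{falkrandell:homotopy} then yields that $\CA_{G'}$, and therefore $\CA_G$, is not $K(\pi,1)$.

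For graphs $G'$ where no simple triangle is directly visible but one only appears after a deformation preserving the intersection lattice -- exactly the situation encountered for $G_1$ in Proposition \ref{prop:g1} -- I would rerun that argument verbatim: embed $\CA_{G'}$ into a one-parameter family $(\CB(t))_{t \in \BBC}$ with $\CB(0) \cong \CA_{G'}$ and $\CB(t) \cong \CA_{G'}$ for all but finitely many $t$, pass to a real value $t_0$ at which $\CB(t_0)$ carries a simple triangle, and join $\CB(0)$ to $\CB(t_0)$ by a loop in $\BBC$ avoiding the finitely many exceptional parameters; this is a lattice isotopy in the sense of \cite{randell:lattice-isotopy}, so the two complements are diffeomorphic and $\CA_{G'}$ is not $K(\pi,1)$. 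Should some member of the family resist both of these, a further fallback in the spirit of Remark \ref{rem:g7g8} is to produce a generic localization of rank at least $4$, which cannot be aspherical by Remark \ref{rem:generic}.

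The main obstacle I anticipate is organisational rather than conceptual. The double-point verification is a connectivity check in $G'$ that must be carried out for each member of Figure \ref{fig:SC-nP}, and if that figure contains an infinite family the real task is to isolate a small number of ``generic'' minors that already contain a simple triangle and dominate all the remaining graphs under deletion and contraction, so that Lemma \ref{lem:local} then closes the argument uniformly. A secondary subtlety is making sure that the three chosen hyperplanes really form an honest triangle -- three \emph{distinct} vertices and common intersection of rank exactly $3$, rather than a pencil -- but this is again a finite check on the subsets $I_1, I_2, I_3$.
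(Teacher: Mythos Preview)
Your overall plan---localize, then invoke the Falk--Randell simple-triangle criterion---is exactly the paper's approach. The gap is in your reduction step. You propose to pass from $\CA_G$ to $\CA_{G'}$ for a rank-$3$ \emph{graph minor} $G'$ of $G$ (via Lemma~\ref{lem:local}) and then search for a simple triangle in $\CA_{G'}$. But a rank-$3$ connected subgraph arrangement is an $\CA_{G'}$ with $G'$ a connected simple graph on three vertices, and there are only two such graphs, $P_3$ and $C_3$. Both $\CA_{P_3}$ (the rank-$3$ braid arrangement) and $\CA_{C_3}$ (factored, by Theorem~\ref{thm:factoredAG}, hence $K(\pi,1)$ by \cite{paris:factored}) are aspherical. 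So no rank-$3$ graph minor of any $G$ can ever witness failure of $K(\pi,1)$; restricting to the graph-minor localizations supplied by Lemma~\ref{lem:local} is strictly too coarse.

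What the paper does instead is to localize $\CA_G$ at a rank-$3$ flat $X \in L(\CA_G)$ that is \emph{not} of graph-minor type---precisely as in Proposition~\ref{prop:g1}, where $X = H_{2}\cap H_{4}\cap H_{[4]}$ and $(\CA_{G_1})_X$ is a six-line arrangement that is not a connected subgraph arrangement at all. The simple-triangle check (or, when needed, the lattice-isotopy variant) is then carried out on that general localization. Your outline becomes correct once ``rank-$3$ minor $G'$'' is replaced by ``rank-$3$ flat $X \in L(\CA_G)$'', but then the search for $X$ and the double-point verification take place in $L(\CA_G)$ directly rather than in a smaller graph, and the connectivity bookkeeping you describe no longer applies. (For the graphs $G_9,\ldots,G_{15}$ the paper separately notes that one may contract to the rank-$4$ graph $G_1$ and quote Proposition~\ref{prop:g1}; this is a graph-minor argument, but at rank $4$, not $3$.)
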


\begin{proof}
	In Table \ref{table:GsSimpT} we present a list of rank $3$ intersections $X \in L(\CA_G)$ for the graphs $G = G_3,\ldots,G_6$, 
	$K_5, G_{16},\ldots, G_{19}$ from Figure \ref{fig:SC-nP} such that $(\CA_G)_X$ has a simple triangle.
	To find these localizations, we used GAP4-code \cite{GAP4} developed by the third author, see \cite{GAP_hyparr_pkg}. 	Thus these rank $3$ localizations are not $K(\pi,1)$, by Lemma \ref{lem:SimpleT}, and so neither is $\CA_G$, by Remark \ref{rem:local}. In particular, they include the instances when $G$ is $G_3$,  $G_4$, $G_5$, or $G_6$ from Figure \ref{fig:G1_8}.

	\begin{table}
		\renewcommand{\arraystretch}{1.3}
		\begin{tabular}{lll}
			$G$ & $X$ & $(\CA_G)_X$ \\
			\hline \hline 
			\begin{tikzpicture}
				[scale=0.5,
				every node/.style={draw, circle, scale=0.75, inner sep=1pt},
				font=\tiny
				]
				\draw (0.5,0.5) node(1){$1$};
				\draw (0,0) node(4){$4$};
				\draw (1,0) node(5){$5$};
				\draw (0,1) node(2){$2$};
				\draw (1,1) node(3){$3$};
				\foreach \from/\to in {1/2,1/3,1/4,1/5}
				\draw (\from) -- (\to);
			\end{tikzpicture} $G_3$ & $H_{1}\cap H_{123}\cap H_{145}$ & $\{H_{1}, H_{123}, H_{145},H_{12345}\}$ \\
			\begin{tikzpicture}
				[scale=0.5,
				every node/.style={draw, circle, scale=0.75, inner sep=1pt},
				font=\tiny
				]
				\draw (0.5,0.5) node(1){$1$};
				\draw (0,0) node(4){$4$};
				\draw (1,0) node(5){$5$};
				\draw (0,1) node(2){$2$};
				\draw (1,1) node(3){$3$};
				\foreach \from/\to in {1/2,1/3,1/4,1/5,4/5}
				\draw (\from) -- (\to);
			\end{tikzpicture} $G_4$ & $H_{1}\cap H_{124}\cap H_{135}$ & $\{H_{1}, H_{124}, H_{135},H_{12345}\}$ \\
			\begin{tikzpicture}
				[scale=0.5,
				every node/.style={draw, circle, scale=0.75, inner sep=1pt},
				font=\tiny
				]
				\draw (0.5,0.5) node(1){$1$};
				\draw (0,0) node(4){$4$};
				\draw (1,0) node(5){$5$};
				\draw (0,1) node(2){$2$};
				\draw (1,1) node(3){$3$};
				\foreach \from/\to in {1/2,1/3,1/4,1/5,2/3,4/5}
				\draw (\from) -- (\to);
			\end{tikzpicture} $G_5$ & $H_{1}\cap H_{124}\cap H_{135}$ & $\{H_{1}, H_{124}, H_{135},H_{12345}\}$ \\
			\begin{tikzpicture}
				[scale=0.5,
				every node/.style={draw, circle, scale=0.75, inner sep=1pt},
				font=\tiny
				]
				\draw (0.5,0.5) node(1){$1$};
				\draw (0,0) node(5){$5$};
				\draw (1,0) node(4){$4$};
				\draw (0,1) node(2){$2$};
				\draw (1,1) node(3){$3$};
				\foreach \from/\to in {1/2,2/3,3/4,4/5,2/5}
				\draw (\from) -- (\to);
			\end{tikzpicture} $G_6$ & $H_{1234}\cap H_{23}\cap H_{25}$ & $\{H_{1234}, H_{23}, H_{25},H_{1245}\}$ \\
			
			\begin{tikzpicture}
				[scale=0.3,
				every node/.style={draw, circle, scale=0.75, inner sep=1pt},
				font=\tiny
				]				
				\draw (1.176,0.382) node(1){$1$};
				\draw (0.0,1.236) node(2){$2$};
				\draw (-1.176,0.382) node(3){$3$};
				\draw (-0.727,-1) node(4){$4$};
				\draw (0.727,-1)  node(5){$5$};
				\foreach \from/\to in {1/2,1/3,1/4,1/5,2/3,2/4,2/5,3/4,3/5,4/5}
				\draw (\from) -- (\to);
			\end{tikzpicture}$K_5$ & $H_{125}\cap H_{145}\cap H_{235}$ & $\{H_{125}, H_{145}, H_{235},H_{345}\}$ \\
			\begin{tikzpicture}
				[scale=0.3,
				every node/.style={draw, circle, scale=0.75, inner sep=1pt},
				font=\tiny
				]				
				\draw (1.176,0.382) node(1){$1$};
				\draw (0.0,1.236) node(2){$2$};
				\draw (-1.176,0.382) node(3){$3$};
				\draw (-0.727,-1) node(4){$4$};
				\draw (0.727,-1)  node(5){$5$};
				\foreach \from/\to in {1/2,1/4,1/5,2/3,2/4,2/5,3/4,3/5,4/5}
				\draw (\from) -- (\to);
			\end{tikzpicture}$G_{16}$ & $H_{1235}\cap H_{1345}\cap H_{25}$ & $\{H_{1235}, H_{1345}, H_{25},H_{45}\}$ \\
			\begin{tikzpicture}
				[scale=0.3,
				every node/.style={draw, circle, scale=0.75, inner sep=1pt},
				font=\tiny
				]				
				\draw (1.176,0.382) node(1){$1$};
				\draw (0.0,1.236) node(2){$2$};
				\draw (-1.176,0.382) node(3){$3$};
				\draw (-0.727,-1) node(4){$4$};
				\draw (0.727,-1)  node(5){$5$};
				\foreach \from/\to in {1/4,1/5,2/3,2/4,2/5,3/4,3/5,4/5}
				\draw (\from) -- (\to);
			\end{tikzpicture}$G_{17}$ & $H_{1235}\cap H_{1345}\cap H_{25}$ & $\{H_{1245}, H_{1345}, H_{25},H_{45}\}$ \\
			\begin{tikzpicture}
				[scale=0.3,
				every node/.style={draw, circle, scale=0.75, inner sep=1pt},
				font=\tiny
				]				
				\draw (1.176,0.382) node(1){$1$};
				\draw (0.0,1.236) node(2){$2$};
				\draw (-1.176,0.382) node(3){$3$};
				\draw (-0.727,-1) node(4){$4$};
				\draw (0.727,-1)  node(5){$5$};
				\foreach \from/\to in {1/4,2/3,2/4,2/5,3/4,3/5,4/5}
				\draw (\from) -- (\to);
			\end{tikzpicture}$G_{18}$ & $H_{1234}\cap H_{1345}\cap H_{24}$ & $\{H_{1234}, H_{1345}, H_{24},H_{45}\}$ \\
			\begin{tikzpicture}
				[scale=0.3,
				every node/.style={draw, circle, scale=0.75, inner sep=1pt},
				font=\tiny
				]				
				\draw (1.176,0.382) node(1){$1$};
				\draw (0.0,1.236) node(2){$2$};
				\draw (-1.176,0.382) node(3){$3$};
				\draw (-0.727,-1) node(4){$4$};
				\draw (0.727,-1)  node(5){$5$};
				\foreach \from/\to in {1/4,1/5,2/4,2/5,3/4,3/5,4/5}
				\draw (\from) -- (\to);
			\end{tikzpicture}$G_{19}$ & $H_{1235}\cap H_{1345}\cap H_{25}$ & $\{H_{1235}, H_{1345}, H_{25},H_{45}\}$\\
			\hline
		\end{tabular}\caption{Rank $3$ intersections $X \in L(\CA_G)$ such that $(\CA_G)_X$ admits a simple triangle.}\label{table:GsSimpT}
	\end{table}

	Finally, observe that $G_1$ can be obtained from graph contractions from 
	$G = G_9$ up to $G_{15}$, so that 
	$\CA_{G_1}$ is a localization of $\CA_G$. It follows from 
	Lemma \ref{lem:local}, 
	Remark \ref{rem:local}, and Proposition \ref{prop:g1} that $\CA_G$ is not $K(\pi,1)$ in these instances as well.
\end{proof}

Armed with Proposition \ref{prop:nonKpione}, 
we obtain some immediate consequences.

\begin{corollary}
	\label{cor:nonKpioneK4}
	Let $G$ be a connected graph on $n \ge 5$ vertices. If $G$ admits $K_4$ as a subgraph, then $\CA_G$ is not $K(\pi,1)$. 
\end{corollary}

\begin{proof}
	After possibly applying some edge contractions to $G$, we arrive at a 
	connected graph with $5$ nodes which contains $K_4$ as a subgraph. As Figure \ref{fig:SC-nP} contains all such,
	it follows from Proposition \ref{prop:nonKpione} along with Lemma \ref{lem:local} and Remark \ref{rem:local} that $\CA_G$ is not $K(\pi,1)$.
\end{proof}

We record the special case of resonance arrangements from the corollary.

	\begin{corollary}
		\label{cor:nonKpioneCompleteGraph}
		Let $G = K_n$ be the complete graph on $n \ge 5$ vertices. Then $\CA_G$ is not $K(\pi,1)$. I.e., resonance arrangements of rank at least $5$ are not $K(\pi,1)$.
	\end{corollary}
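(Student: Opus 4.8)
The plan is to obtain the statement for all $n \ge 5$ from the single case $G = K_5$ of Proposition \ref{prop:nonKpione}, using that asphericity is a local property. Recall from Remark \ref{rem:local} that the $K(\pi,1)$ property is local in the sense of \S\ref{sub:local}, and recall from the discussion preceding Lemma \ref{lem:local} that for any subset $S$ of the node set of a graph $G$ the arrangement $\CA_{G[S]}$ is a localization of $\CA_G$.

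First I would observe that for every $n \ge 5$ and every $5$-element subset $S \subseteq [n]$ one has $K_n[S] = K_5$, so $\CA_{K_5}$ occurs as a localization of $\CA_{K_n}$. Since $K_5$ is among the graphs listed in Figure \ref{fig:SC-nP}, Proposition \ref{prop:nonKpione} gives that $\CA_{K_5}$ is not $K(\pi,1)$. If $\CA_{K_n}$ were $K(\pi,1)$, then locality of asphericity would force every localization of $\CA_{K_n}$, and in particular $\CA_{K_5}$, to be $K(\pi,1)$ as well, a contradiction; equivalently, one applies the contrapositive form of Lemma \ref{lem:local}(i) to transport the failure of the $K(\pi,1)$ property from the induced subgraph $K_5$ up to $K_n$. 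Hence $\CA_{K_n}$ is not $K(\pi,1)$ for all $n \ge 5$.

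For the reformulation it remains to note that, by Definition \ref{def:AG}, when $G = K_n$ every nonempty $I \subseteq [n]$ induces a connected subgraph, so $\CA_{K_n} = \{H_I \mid \varnothing \ne I \subseteq [n]\}$ is exactly the resonance arrangement on $n$ coordinates; it contains the coordinate hyperplanes, hence is essential of rank $n$, and therefore fails to be $K(\pi,1)$ whenever $n \ge 5$. I do not expect any genuine obstacle here: the entire content sits in Proposition \ref{prop:nonKpione}, and the only thing one must verify is the elementary observation that $K_5$ embeds as an induced subgraph of $K_n$ for all $n \ge 5$.
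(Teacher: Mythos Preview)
Your argument is correct. It differs from the paper's only in a minor way: the paper proceeds by induction on $n$, using edge contraction (Lemma \ref{lem:local}(ii)) to pass from $K_n$ to $K_{n-1}$, whereas you go straight to $K_5$ via an induced subgraph (Lemma \ref{lem:local}(i)), avoiding induction altogether. Your route is slightly more direct, since $K_5$ sits inside every $K_n$ as an induced subgraph in one step; the paper's contraction-plus-induction achieves the same reduction but in $n-5$ steps. Both arguments rest entirely on Proposition \ref{prop:nonKpione} for the base case $K_5$ and on locality of asphericity, so the content is the same.
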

	
Other families of connected subgraph arrangements can also be considered. He is such an instance. 
	
	\begin{corollary}
		\label{cor:nonKpioneG1G5}
		Let $G$ be a connected graph on $n \ge 5$ vertices. Suppose $G$ contains at least two cycles. Then $\CA_G$ is not $K(\pi,1)$. 
	\end{corollary}
	
	\begin{proof}
			After possibly applying some edge contractions to $G$, we arrive at a 
		connected graph with $5$ nodes which either contains $K_4$ as a subgraph or coincides with $G_{9}$ or $G_{15}$ from Figure \ref{fig:SC-nP}. Again, 
		it follows from Proposition \ref{prop:nonKpione} along with Lemma \ref{lem:local} and Remark \ref{rem:local} that $\CA_G$ is not $K(\pi,1)$.
	\end{proof}

Finally, we address Theorem \ref{thm:kpi1}.

\begin{proof}[Proof of Theorem \ref{thm:kpi1}]
	If $\CA_G$ has rank $3$, then either $G = P_3$ or $C_3$. In both instances $\CA_G$ is $K(\pi,1)$, thanks to \cite[Thm.~7.2]{cuntzkuehne:subgraphs}, and free by Theorem \ref{thm:freeAG}. If $\CA_G$ has rank $4$, then $G = P_4$, $C_4$, $A_{3,2}$ $\Delta_{3,1}$, $G_1$ or $G_2 = K_4$. 
	The result now follows from Theorem \ref{thm:freeAG} and Proposition \ref{prop:g1}.

	If $\CA_G$ has rank at least $5$, our argument closely follows  the line of reasoning of \cite[Thm.~6.7]{cuntzkuehne:subgraphs}:
For a given $G$ with at least $5$ nodes we use the contraction method from \S \ref{sub:local} and arrive at a suitable induced subgraph $G[S]$ of rank $5$ which
either belongs to one of our accepted families  or it appears in Figure \ref{fig:SC-nP}. In the latter case, $\CA_{G[S]}$ is not $K(\pi,1)$, thanks to Proposition \ref{prop:nonKpione}. Consequently,
owing to Lemma \ref{lem:local} and Remark \ref{rem:local} also $\CA_G$ fails to be $K(\pi,1)$.
We use this repeatedly in the arguments below.

	(1). Suppose there is a node $v$ in $G$ of degree at least $4$. Let $v_1, \ldots, v_4$ be four distinct neighbours of $v$. Let $S = \{v, v_1, \ldots, v_4\}$. Then the induced subgraph $G[S]$  of $G$ by $S$ is one of the graphs listed in the first two rows of Figure \ref{fig:SC-nP}.
	It follows from the argument outlined above that
	$\CA_G$ can't be $K(\pi,1)$.
	 Consequently, any node in $G$ has degree at most $3$.

	(2). Assume that $G$ has at least two cycles $C_1$ and $C_2$ of length at least three that share at least one
	edge. Contracting edges results in an induced subgraph $G[S] = G_1$, since any node in $G$ has degree at most $3$. It follows from 
 Lemma \ref{lem:local} and Proposition \ref{prop:g1} that $\CA_G$ fails to be $K(\pi,1)$.
	So there is no pair of such cycles in $G$.

	(3). Now suppose $G$ admits at least two cycles of length at least $3$ which do not share an edge. Then contracting edges in $G$ to a minimum of $5$ nodes leads to an induced subgraph $G[S]$ identical to $G_5$ in
	Figure \ref{fig:SC-nP}.
	Once again we conclude that $\CA_G$ is not $K(\pi,1)$.
	So there is no pair of such cycles in $G$ either.
	So there can be at most one cycle in $G$.

	(4). Next we suppose that $G$ admits a single cycle of length at least $4$, but that $G$ itself is not a cycle itself. Since the rank of $\CA_G$ is assumed to be at least $5$, contracting edges in $G$ leads to an induced subgraph $G[S]$ which is identical to $G_6$ in
	Figure \ref{fig:SC-nP}, as $G$ itself is not a cycle. Once again, we conclude that $\CA_G$ must fail to be $K(\pi,1)$.
	So there is at most one cycle in $G$ of length $3$.

	(5). Now suppose that $G$ does admit a cycle of length $3$. Then at least one of its nodes  is of degree $3$ (else $G = C_3$). Suppose there is yet another node in $G$ of degree $3$ which does not belong to the 3-cycle. After possibly contracting edges in $G$ we arrive at an induced subgraph $G[S]$ which is identical to $G_4$ in
	Figure \ref{fig:SC-nP}.
	Yet again, $\CA_G$ is not $K(\pi,1)$. 
 Suppose $G$ admits three nodes of degree three which belong to the cycle of length $3$. The induced subgraph of this configuration coincides with the graph $G_7$ from Figure \ref{fig:G1_8}. It follows from Lemma \ref{lem:local} and Remark \ref{rem:g7g8} that 
 $\CA_G$ fails to be $K(\pi,1)$. 
 So $G$ is $\Delta_{n,k}$, for some $n\ge4$.

	(6). Finally, we consider the case when $G$ is a tree and not itself a path graph. If $G$ admits more than one node of degree $3$, then contracting edges leads to an induced subgraph $G[S]$ which is identical to $G_3$ in
	Figure \ref{fig:SC-nP}. So there is at most one node $v$ of degree $3$. If the paths attached to the node $v$ all have lengths at least $2$, then there is an induced subgraph of $G$ identical to $G_8$ from Figure \ref{fig:G1_8}. It follows from Lemma \ref{lem:local} and Remark \ref{rem:g7g8} that 
 $\CA_G$ fails to be $K(\pi,1)$. 
 So therefore, $G$ is $A_{n,k}$, for some $n\ge4$.
\end{proof}

In view of Theorem \ref{thm:kpi1} and its proof above, 
it would be interesting to know whether $\CA_G$ is $K(\pi,1)$ when $G = C_4$ or $G = K_4$.

\section{Combinatorial Formality: Proof of Theorem \ref{thm:AGformal}}
\label{s:thm:AGformal}

A property for arrangements is said to be \emph{combinatorial} if it only depends on the intersection lattice of the underlying arrangement.
Yuzvinsky \cite[Ex.~2.2]{yuzvinsky:obstruction} demonstrated that formality is not combinatorial, answering a question raised by Falk and Randell \cite{falkrandell:homotopy} in the negative.
Yuzvinsky's insight motivates the following notion from \cite{moellermueckschroehre:formal}.

\begin{defn}
	\label{def:combformal}
	Suppose $\CA$ is a formal arrangement. We say $\CA$ is \emph{combinatorially formal} if every arrangement with 
	an intersection lattice isomorphic to the one of
	$\CA$ is also formal.
\end{defn}

The following definitions, which are originally due to Falk  for matroids \cite{falk:line-closure}, were adapted for arrangements in \cite[\S 2.4]{moellermueckschroehre:formal}.
Let $\CB \subset \CA$ be a subset of hyperplanes.
We say $\CB$ is \emph{closed} if $\CB =\CA_Y$ for $Y=\bigcap\limits_{H\in \CB} H$.
We call $\CB $ \emph{line-closed}
if for every pair $H,H'\in \CB $ of hyperplanes, we have $\CA_{H\cap H'}\subset \CB $.
The \emph{line-closure} $\lc(\CB)$ of $\CB $ is defined
as the intersection of all line-closed subsets of $\CA$ containing $\CB$.
The arrangement $\CA$ is called \emph{line-closed}
if every line-closed subset of $\CA$ is closed.
With these notions, we have the following criterion for combinatorial formality, see \cite[Cor.~3.8]{falk:line-closure}, \cite[Prop.~3.2]{moellermueckschroehre:formal}:

\begin{prop}
	\label{prop:lcbasis}
	Let $\CA$ be an arrangement of rank $r$. Suppose $\CB \subseteq \CA$ consists of $r$ hyperplanes such that $r(\CB)=r$ and $\lc(\CB)=\CA$. Then $\CA$ is combinatorially formal.
\end{prop}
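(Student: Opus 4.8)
The plan is to prove the ostensibly stronger statement that \emph{every} realization of the intersection lattice of $\CA$ is formal; combinatorial formality then follows at once from Definition \ref{def:combformal}. First I would recall the linear-algebra reformulation of formality: fixing defining forms $\alpha_H \in V^*$ for $H \in \CA$, let $\phi\colon \BBK^\CA \to \sum_{H \in \CA}\BBK\alpha_H$ be the surjection $e_H \mapsto \alpha_H$, let $F(\CA) := \ker\phi$ be the space of linear dependencies, and let $F_2(\CA) \subseteq F(\CA)$ be the subspace spanned by those dependencies supported on some codimension-two localization $\CA_X$. By definition $\CA$ is formal exactly when $F(\CA) = F_2(\CA)$, and this reformulation is phrased entirely in terms of the $\alpha_H$, so it is what one checks for an arbitrary realization.

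Next I would record a spanning set of $F(\CA)$ adapted to $\CB$. Since $r(\CB) = |\CB| = r = r(\CA)$, the forms $\{\alpha_{H'} : H' \in \CB\}$ form a basis of $\sum_{H}\BBK\alpha_H$, so for each $H \in \CA$ there are unique scalars $c_{H,H'}$ ($H' \in \CB$) with $\alpha_H = \sum_{H'\in\CB}c_{H,H'}\alpha_{H'}$; the elements $r_H := e_H - \sum_{H'\in\CB}c_{H,H'}e_{H'}$ lie in $F(\CA)$, vanish for $H \in \CB$, and span $F(\CA)$. Thus it suffices to prove $r_H \in F_2(\CA)$ for all $H \in \CA$. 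Since $\lc(\CB) = \CA$, I would run an induction along the line-closure filtration $\CC_0 := \CB$ and $\CC_{i+1} := \CC_i \cup \bigcup_{H_1, H_2 \in \CC_i}\CA_{H_1 \cap H_2}$, the inductive claim being that $r_H \in F_2(\CA)$ for every $H \in \CC_i$.

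The inductive step is the crux. Let $H \in \CC_{i+1}\setminus\CC_i$, say $H \in \CA_{H_1 \cap H_2}$ with distinct $H_1, H_2 \in \CC_i$, so $r(H_1 \cap H_2) = 2$; write $\alpha_H = \lambda\alpha_{H_1} + \mu\alpha_{H_2}$. Then $s := e_H - \lambda e_{H_1} - \mu e_{H_2}$ is a dependency supported on the codimension-two localization $\CA_{H_1\cap H_2}$, hence $s \in F_2(\CA)$. Using $r_{H_1}, r_{H_2} \in F_2(\CA)$ from the inductive hypothesis, a direct computation gives
\[
r_H = s + \lambda r_{H_1} + \mu r_{H_2} + \sum_{H'\in\CB}\bigl(\lambda c_{H_1,H'} + \mu c_{H_2,H'} - c_{H,H'}\bigr)e_{H'},
\]
and the last sum vanishes: expanding $\alpha_H = \lambda\alpha_{H_1} + \mu\alpha_{H_2}$ in the basis $\{\alpha_{H'}\}_{H'\in\CB}$ forces $c_{H,H'} = \lambda c_{H_1,H'} + \mu c_{H_2,H'}$ for all $H'$. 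Hence $r_H = s + \lambda r_{H_1} + \mu r_{H_2} \in F_2(\CA)$, completing the induction. Since $\bigcup_i \CC_i = \lc(\CB) = \CA$, we conclude $F(\CA) = F_2(\CA)$, i.e.\ $\CA$ is formal.

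Finally --- and this is the only genuinely conceptual point --- nothing above used $\CA$ beyond data determined by $L(\CA)$: that $\CB$ is a basis of the underlying matroid ($r(\CB) = |\CB| = r$), which localizations have codimension two, and that $\lc(\CB) = \CA$, since line-closure depends only on the lattice. Therefore, if $\CD$ is any arrangement with $L(\CD) \cong L(\CA)$, the subset of $\CD$ corresponding to $\CB$ satisfies the same hypotheses and the identical argument shows $F(\CD) = F_2(\CD)$, so $\CD$ is formal; thus $\CA$ is combinatorially formal. I expect the main work to be purely organizational --- setting up $F(\CA)$, $F_2(\CA)$ and the spanning set $\{r_H\}$ cleanly and dispatching the degenerate cases (e.g.\ $\CA = \CB$, where $F(\CA) = 0$, or an inessential $\CA$) --- rather than any substantive difficulty.
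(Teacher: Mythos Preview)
Your proof is correct. The paper itself does not prove this proposition; it merely states it with citations to \cite[Cor.~3.8]{falk:line-closure} and \cite[Prop.~3.2]{moellermueckschroehre:formal}, so there is no ``paper's own proof'' to compare against beyond those references. Your argument is in fact the standard one found there: identify formality with $F(\CA)=F_2(\CA)$, use the basis $\CB$ to produce the spanning set $\{r_H\}$ of $F(\CA)$, and push membership in $F_2(\CA)$ up the line-closure filtration via the elementary identity $r_H = s + \lambda r_{H_1} + \mu r_{H_2}$. The closing observation --- that the hypotheses (``$\CB$ is an lc-basis'') are lattice-theoretic, hence transfer to any realization --- is exactly how combinatorial formality is deduced in \cite{moellermueckschroehre:formal}. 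Nothing is missing; the degenerate cases you flag are indeed trivial.
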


A subset $\CB \subseteq \CA$ as in Proposition \ref{prop:lcbasis} is called an \emph{lc-basis} of $\CA$.

\begin{proof}[{Proof of Theorem \ref{thm:AGformal}}]
	Let $G$ be a connected graph.
	Let $\CB = \{\ker x_i \mid i \in [n]\} \subseteq \CA_G$.
	Then it is easy to see that successively all $H_I$ for $I \subseteq [n]$ with $G[I]$ connected belong to the line-closure $\lc(\CB)$ of $\CB$. Consequently, $\lc(\CB) = \CA_G$. Since
	$\rk(\CA_G) = n = |\CB| = \rk(\CB)$, it follows from Proposition \ref{prop:lcbasis}
	that $\CA_G$ is combinatorially formal.
\end{proof}

There is a stronger notion of formality for an arrangement $\CA$, that of \emph{$k$-formality} for $1 \le k \le \rk(\CA)$ due to Brandt and Terao \cite{brandtterao}.
In view of Theorem \ref{thm:AGformal} and in view of the fact that all free arrangements are not just formal but are $k$-formal for all $k$, one might ask for this stronger notion of $k$-formality among connected subgraph arrangements.

It was shown in \cite[Prop.~3.4]{mueckschroehrlewiesner} that despite the fact that both simple arrangements $\CA_{G_1}$ and $\CA_{G_2}$ are not free, both admit free multiplicities. It thus  follows from 
\cite[Cor.~4.10]{DiPasquale} that both $\CA_{G_1}$ and $\CA_{G_2}$ are $k$-formal for any $k$.
Thus $\CA_G$ is $k$-formal for any $k$ provided the rank of $\CA_G$ is at most $4$ or $\CA_G$ is free. Computational evidence for further non-free small rank connected subgraph arrangements suggests the following.

\begin{conjecture}
	Any $\CA_G$ is $k$-formal for any $k$.
\end{conjecture}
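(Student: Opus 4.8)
The statement to prove is Theorem~\ref{thm:AGformal}: for a fixed field $\BBK$, the connected subgraph arrangement $\CA_G(\BBK)$ is combinatorially formal. The plan is to exhibit an explicit lc-basis and invoke Proposition~\ref{prop:lcbasis}. Since combinatorial formality depends only on the intersection lattice, and by the product-compatibility remarks from the introduction it suffices to treat connected $G$, I would fix a connected graph $G=(N,E)$ on $N=[n]$ and take as candidate basis $\CB := \{\ker x_i \mid i\in[n]\}\subseteq \CA_G(\BBK)$. Clearly $|\CB|=n=\rk(\CA_G)$ and $\rk(\CB)=n$, since the $x_i$ are linearly independent coordinate forms. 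So the only thing to verify is that the line-closure $\lc(\CB)$ equals all of $\CA_G$.

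First I would set up the key closure step: if $H_I,H_J\in\lc(\CB)$ with $I,J\subseteq[n]$, $G[I]$ and $G[J]$ connected, $I\cap J\neq\varnothing$, and $I\cup J$ still inducing a connected subgraph, then $H_{I\cup J}\in\lc(\CB)$. To see this, observe that the localization $\CA_{H_I\cap H_J}$ contains every hyperplane $H_K$ of $\CA_G$ with $H_I\cap H_J\subseteq H_K$; in particular, working in the rank-two flat $H_I\cap H_J$, the forms $\sum_{i\in I}x_i$, $\sum_{j\in J}x_j$ and their difference and sum are all supported there, and one checks that $\sum_{k\in I\cup J}x_k = \sum_{i\in I}x_i + \sum_{j\in J}x_j - \sum_{m\in I\cap J}x_m$ vanishes on $H_I\cap H_J$ only after also noting that $\sum_{m\in I\cap J}x_m$ lies in the span of the two defining forms when $I\cap J$ corresponds to a flat forced into the localization — more carefully, one uses that $\CA_{H_I\cap H_J}$ already contains $\ker x_m$ for the relevant indices because those coordinate hyperplanes are in $\CB\subseteq\lc(\CB)$, and line-closedness then forces $H_{I\cup J}$ in. The cleanest formulation: since all $\ker x_i\in\lc(\CB)$, one builds up the $H_I$ by induction on $|I|$, adding one vertex at a time along the connected subgraph $G[I]$, at each stage taking the line-closure of $\{\ker x_i : i\in I\}\cup\{\text{a previously obtained }H_{I'}\}$ where $I = I'\cup\{v\}$ and $v$ is adjacent to $I'$ in $G$.

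Concretely, I would argue by induction on $|I|\ge 1$ that $H_I\in\lc(\CB)$ for every $I$ with $G[I]$ connected. The base case $|I|=1$ is immediate since $H_{\{i\}}=\ker x_i\in\CB$. For the inductive step, pick $v\in I$ such that $G[I\setminus\{v\}]$ is still connected (such a $v$ exists because every connected graph on at least two vertices has a non-cutvertex). By induction $H_{I\setminus\{v\}}\in\lc(\CB)$, and also $\ker x_v\in\CB\subseteq\lc(\CB)$. Now consider $Y := H_{I\setminus\{v\}}\cap \ker x_v$, a flat of codimension two (generically) in which the form $\sum_{i\in I}x_i = \big(\sum_{i\in I\setminus\{v\}}x_i\big) + x_v$ vanishes; hence $H_I\in\CA_Y=\CA_{H_{I\setminus\{v\}}\cap\ker x_v}$, and line-closedness of $\lc(\CB)$ gives $H_I\in\lc(\CB)$. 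This shows $\lc(\CB)=\CA_G$, and Proposition~\ref{prop:lcbasis} finishes the proof. The one point needing care — the main obstacle — is the degenerate possibility that $H_{I\setminus\{v\}}\cap\ker x_v$ has codimension one rather than two, i.e. that $\ker x_v \supseteq H_{I\setminus\{v\}}$ or the two coincide; but this cannot happen, since $\sum_{i\in I\setminus\{v\}}x_i$ and $x_v$ involve disjoint sets of coordinates and are therefore linearly independent forms, so the intersection genuinely has codimension two and the localization argument applies. The argument is field-independent throughout, which gives the "for a fixed field $\BBK$" strength of the statement, and one should remark that the generic $3$-arrangement $Q(\CA)=xyz(x+y+z)$ shows the hypothesis that $G[I]$ ranges over \emph{connected} induced subgraphs (rather than arbitrary $0/1$-forms) is essential.
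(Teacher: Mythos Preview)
You have misidentified the statement. What you were asked to address is the \emph{conjecture} that every $\CA_G$ is $k$-formal for all $k$ in the sense of Brandt and Terao \cite{brandtterao}; the paper does not prove this and explicitly leaves it open, supported only by the fact that free arrangements are $k$-formal for all $k$, by the argument for $\CA_{G_1}$ and $\CA_{G_2}$ via free multiplicities and \cite{DiPasquale}, and by further computational evidence. What you have written is instead a proof of Theorem~\ref{thm:AGformal}, the (weaker) assertion that $\CA_G(\BBK)$ is combinatorially formal. Your argument for that theorem is correct and is essentially the paper's own proof: exhibit the coordinate hyperplanes $\CB=\{\ker x_i\mid i\in[n]\}$ as an lc-basis and apply Proposition~\ref{prop:lcbasis}. (Your inductive build-up of $H_I$ one vertex at a time is a slightly more detailed unpacking of what the paper states in one sentence, but the content is the same.)

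The genuine gap is therefore not in your argument but in its target. Combinatorial formality, and the lc-basis criterion of Proposition~\ref{prop:lcbasis}, only yield ordinary formality, i.e.\ $2$-formality: they say the relation space $F(\CA)$ is generated by rank-$2$ relations. The notion of $k$-formality for $k\ge 3$ concerns the exactness of higher terms in the Brandt--Terao complex and is a strictly stronger condition; an lc-basis says nothing about it. So your proposal, while a valid proof of Theorem~\ref{thm:AGformal}, does not touch the conjecture, and no amount of refining the line-closure argument will close that gap without a genuinely new idea controlling the higher syzygies of the relation space.
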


\section{On ideal subarrangements of $\CA_G$}
\label{sec:idealAG}

In this section we investigate properties of natural subarrangements 
of connected subgraph arrangements which are counterparts of ideal subarrangements of Weyl arrangements.

\subsection{Arrangements of ideal type in Weyl arrangements}
\label{sec:ideal}
We begin by recalling the construction of ideal arrangements from \cite[\S 11]{sommerstymoczko}.
Let $\Phi$ be an irreducible, reduced root system
and let $\Phi^+$ be the set of positive roots
with respect to some set of simple roots $\Pi$.
An \emph{(upper) order ideal},
or simply \emph{ideal} for short,
of  $\Phi^+$, is a subset $\CI$  of $\Phi^+$
satisfying the following condition:
if $\alpha \in \CI$ and $\beta \in \Phi^+$ so that
$\alpha + \beta \in \Phi^+$, then $\alpha + \beta \in \CI$.
Recall the standard partial ordering
$\preceq$ on $\Phi$: $\alpha \preceq \beta$
provided $\beta - \alpha$ is a $\BBZ_{\ge0}$-linear combination
of positive roots, or $\beta = \alpha$. Then $\CI$ is an ideal in $\Phi^+$
if and only if whenever
$\alpha \in \CI$ and $\beta \in \Phi^+$ so that
$\alpha \preceq \beta$, then $\beta \in \CI$.

Let $\CA(\Phi)$ be the \emph{Weyl arrangement} of $\Phi$,
i.e., $\CA(\Phi) = \{ H_\alpha \mid \alpha \in \Phi^+\}$,
where $H_\alpha$ is the hyperplane in the Euclidean space
$V = \BBR \otimes \BBZ \Phi$ orthogonal to the root $\alpha$.
Following \cite[\S 11]{sommerstymoczko},
we associate with an ideal $\CI$ in $\Phi^+$ the arrangement
consisting of all hyperplanes with respect to the roots in $\CI^c := \Phi^+ \setminus \CI$, the complement of $\CI$ in
$\Phi^+$.

\begin{defn}[{\cite[\S 11]{sommerstymoczko}}]
	\label{def:idealtype}
 Let $\CI \subseteq \Phi^+$ be an ideal and let
$\CI^c := \Phi^+ \setminus \CI$
be its complement in
$\Phi^+$.
	The \emph{arrangement of ideal type} associated with
	$\CI$ is the subarrangement $\CA_\CI$
	of $\CA(\Phi)$ defined by
	\[
	\CA_\CI := \{ H_\alpha \mid \alpha \in \CI^c\}.
	\]
 We note that $\CI^c$
 is a lower order ideal in the set of positive roots  $\Phi^+$.  
\end{defn}

In \cite{roehrle:ideal} it is shown that a combinatorial property (see 
\cite[Cond.\ 1.10]{roehrle:ideal})
combined with Terao's fibration theorem \cite{terao:modular}
provides an inductive method which allows one to deduce that a large class of
the arrangements of ideal type $\CA_\CI$
are inductively free.
In fact all ideal arrangements
$\CA_\CI$ are inductively free for all types of reduced root systems, thanks to 
\cite[Thm.~1.4]{cuntzroehrleschauenburg:ideal}.

\subsection{Arrangements of ideal type in connected subgraph arrangements}
\label{sec:ideal}

In \cite[Ex.~1.4]{cuntzkuehne:subgraphs}, Cuntz and K\"uhne remark that if $G$ is the underlying simple graph of the Dynkin diagram of a reduced root system $\Phi$, then the corresponding connected subgraph arrangement $\CA_G$ is a particular ideal arrangement $\CA_\CI$ as in Definition \ref{def:idealtype} where the hyperplanes in $\CA_G$ correspond to the positive roots $\alpha$ in $\Phi$ 
where a simple root has coefficient $0$ or $1$ in~$\alpha$.

Here Cuntz and K\"uhne also indicate that if the graph $G$ is not the Dynkin diagram of a reduced root system so that the Coxeter group with underlying Dynkin diagram $G$ is infinite, one may  
interpret $\CA_G$ as a finite ideal subarrangement 
of the infinite root system associated with $G$.

Generalizing this interpretation, 
for a simple graph $G$ we consider the poset of induced connected subgraphs of $G$ ordered by inclusion. A lower order ideal $\CI$ in this poset then gives rise to a subarrangement $\CA_\CI$ of 
$\CA_G$ (generalizing ideal subarrangements of Weyl arrangements discussed above) to which we also refer as an 
\emph{ideal subarrangement}  of $\CA_G$; i.e., for such an ideal $\CI$, define 
\[
\CA_\CI := \{H_I \in \CA_G \mid I \in \CI \}.
\]

It was shown in \cite[Thm.~1.1]{abeetall:weyl} that all ideal subarrangements $\CA_\CI$ of Weyl arrangements are free by means of Theorem \ref{Thm_MAT}, consequently the $\CA_\CI$ are MAT-free, see Definition \ref{def:MATfree}. 
Generalizing this result and generalizing \cite[Thm.~4.1]{cuntzkuehne:subgraphs}, 
we observe next that MAT-freeness also prevails for the class of ideal subarrangements $\CA_\CI$ of the connected subgraph arrangements $\CA_G$ for $G$ an almost path graph $A_{n,k}$.

\begin{prop}
\label{prop:idealAG}    
Let $G$ be  an almost path graph $A_{n,k}$ and let 
$\mathcal{I}$ be an ideal in the poset of induced subgraphs of $A_{n,k}$, and let $\CA_\CI$ be the associated ideal subarrangement of $\CA_G$.
Then $\CA_\CI$ is MAT-free.
\end{prop}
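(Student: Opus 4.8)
The plan is to obtain an explicit MAT-partition for $\CA_\CI$ by restricting the known one for $\CA_{A_{n,k}}$. By \cite[Thm.~4.1]{cuntzkuehne:subgraphs} (see also \S\ref{s:accurateAG}), the arrangement $\CA_G$ for $G = A_{n,k}$ is MAT-free, and a MAT-partition is given by the ``size'' blocks $\pi = (\pi_1 | \cdots | \pi_{n+1})$ with $\pi_d := \{H_I \in \CA_G \mid |I| = d\}$. For a lower order ideal $\CI$ in the poset of connected induced subgraphs of $G$, I would set $\pi^\CI_d := \pi_d \cap \CA_\CI = \{H_I \mid I \in \CI,\ |I| = d\}$; these blocks are disjoint and their union is $\CA_\CI$. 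First I would note that $\pi^\CI$ has no gaps: if $I \in \CI$ has $|I| = d \ge 2$, then $G[I]$ is connected on at least two vertices, hence has a non-cut vertex $v$, so $I \setminus \{v\}$ is again a connected induced subgraph and lies in $\CI$ by downward closure; iterating gives $\pi^\CI_{d'} \neq \varnothing$ for all $1 \le d' \le d$ (the case $\CI = \varnothing$ being trivial). Thus, after discarding empty blocks, $\pi^\CI = (\pi^\CI_1 | \cdots | \pi^\CI_D)$ needs no re-indexing, and $\CA^\CI_k := \bigcup_{i \le k} \pi^\CI_i$ arises from $\CA_k := \bigcup_{i \le k} \pi_i$ by deleting the hyperplanes $H_J$ with $J \notin \CI$. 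The goal is then to verify conditions (1)--(3) of Definition \ref{def:MATfree} for $\pi^\CI$.

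Conditions (1) and (2) should pass to $\CA_\CI$ immediately. For (1), $\pi^\CI_d \subseteq \pi_d$ and a subset of a linearly independent set is linearly independent, so $\rk \pi^\CI_d = |\pi^\CI_d|$. For (2), a point $p$ with $p \in \bigcap_{H \in \pi_{d+1}} H$ and $p \notin \bigcup_{H' \in \CA_d} H'$ also witnesses (2) for $\pi^\CI$, since passing to the smaller family $\pi^\CI_{d+1} \subseteq \pi_{d+1}$ only enlarges the flat $\bigcap H$, while passing to the smaller family $\CA^\CI_d \subseteq \CA_d$ only shrinks the union $\bigcup H'$.

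The substance of the proof is condition (3): for each $d$ and each $H_I \in \pi^\CI_{d+1}$ one must show $|\CA^\CI_d| - |(\CA^\CI_d \cup \{H_I\})^{H_I}| = d$. I would expand both sides over codimension-two flats. Grouping the hyperplanes $H_J \in \CA_d$ by the flat $X = H_I \cap H_J$ they cut out on $H_I$ yields $|\CA_d| - |(\CA_d \cup \{H_I\})^{H_I}| = \sum_X (|(\CA_d)_X| - 1)$, the sum over codimension-two $X \subseteq H_I$ with $(\CA_d)_X \neq \varnothing$, and likewise with $\CA^\CI_d$ in place of $\CA_d$. Since $\pi$ is a MAT-partition the first quantity equals $d$, so it suffices to show that deleting the hyperplanes $H_J$ with $J \notin \CI$ leaves the sum unchanged. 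The key local input is the classification, for a fixed flat $X = H_I \cap H_J$ with $|J| < |I|$, of the hyperplanes through $X$: such a hyperplane is $H_K$ with $\sum_{k \in K} x_k$ in the plane spanned by $\sum_{i \in I} x_i$ and $\sum_{j \in J} x_j$, and a coefficient inspection shows the only $0/1$-forms in that plane are $0$, $\sum_{i \in I} x_i$, $\sum_{j \in J} x_j$, together with $\sum_{i \in I \cup J} x_i$ when $I \cap J = \varnothing$ and $\sum_{i \in I \setminus J} x_i$ when $J \subsetneq I$. Among these, only $J$ and, when $J \subsetneq I$, $I \setminus J$ have support of size $< |I|$, because $|I \cup J| = |I| + |J| > |I|$. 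Hence $(\CA_d)_X \subseteq \{H_J\}$ unless $J \subsetneq I$, in which case $(\CA_d)_X \subseteq \{H_J, H_{I \setminus J}\}$. In the former case $X$ contributes $0$ to the full sum, and also $0$ on the $\CI$-side (either $(\CA^\CI_d)_X = (\CA_d)_X$ still has size $\le 1$, or it is empty and $X$ disappears). In the latter case $J$ and $I \setminus J$ are proper connected induced subgraphs of $G[I]$, hence lie in $\CI$ because $I \in \CI$ and $\CI$ is downward closed; thus $(\CA^\CI_d)_X = (\CA_d)_X$ and $X$ contributes equally on both sides. So the two sums coincide, condition (3) holds with value $d$, and $\pi^\CI$ is a MAT-partition for $\CA_\CI$.

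The hard part will be condition (3), and inside it the elementary but indispensable classification of $0/1$-forms in the plane spanned by $\sum_{i\in I} x_i$ and $\sum_{j \in J} x_j$: this is precisely what forces every ``short'' hyperplane through a flat $X \subseteq H_I$ to be supported on a subset of $I$, and therefore --- by the downward closure of $\CI$ --- to survive in $\CA_\CI$. The only other point requiring attention is the bookkeeping that $\pi^\CI$ has no empty block between two non-empty ones, so that the index $k$ appearing in condition (3) is the same on the full side and the $\CI$-side. I would also remark that the argument uses nothing about $A_{n,k}$ beyond \cite[Thm.~4.1]{cuntzkuehne:subgraphs}: it shows, more generally, that whenever $\CA_G$ is MAT-free via its size-partition, every ideal subarrangement of $\CA_G$ is MAT-free via the restricted size-partition.
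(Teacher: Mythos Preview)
Your argument is correct and follows essentially the same route as the paper: restrict the size-block MAT-partition of \cite[Thm.~4.1]{cuntzkuehne:subgraphs} to the ideal $\CI$ and verify conditions (1)--(3), the crux being that the count in (3) only involves hyperplanes $H_J$ with $J$ below $I$ in the subgraph poset, all of which lie in $\CI$ by downward closure. Where the paper simply refers back to the proof in \cite{cuntzkuehne:subgraphs} for (3), you supply a self-contained justification via the classification of $0/1$-forms in the span of $\alpha_I$ and $\alpha_J$, and you also make the ``no gaps'' point explicit; one small slip to correct is that $G[I\setminus J]$ need not be connected (e.g.\ $I=\{1,2,3\}$, $J=\{2\}$ in $P_3$), but in that case $H_{I\setminus J}\notin\CA_G$ so the flat still contributes $0$ on both sides and your conclusion $(\CA^\CI_d)_X=(\CA_d)_X$ stands.
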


\begin{proof}
    The case $\CI = G$ 
    is \cite[Thm.~4.1]{cuntzkuehne:subgraphs}.
	The argument in the proof of \cite[Thm.~4.1]{cuntzkuehne:subgraphs} generalizes to arrangements stemming from ideals of such posets for non-Dynkin type almost-path graphs $A_{n,k}\neq E_6,E_7,E_8$.
	 The MAT-partition needed for $\CA_\CI$  is the same as in the proof of \cite[Thm.~4.1]{cuntzkuehne:subgraphs}, simply restricted to the ideal $\mathcal{I}$: let $d$ be the maximal rank of $\mathcal{I}$; define: \begin{align*} &\CA_0 := \Phi_d \text{ the empty arrangement in } \mathbb{Q}^{d} \\ &\CA_{i} := \CA_{i-1} \cup \{H_I\mid H_I\in \CA_{\mathcal{I}} \text{ and } |I|=i\} \text{ for }1\leq i\leq d. \end{align*}
	The proof of \cite[Thm.~4.1]{cuntzkuehne:subgraphs} can then be repeated almost verbatim: all hypotheses of Theorem \ref{Thm_MAT} 
 are satisfied. The arrangement $\CA_1$ is Boolean, so there is nothing to be shown for the first step $(\CA_0,\CA_1)$. For the pair $(\CA_{i-1},\CA_i)$ we observe that:
	\begin{enumerate}
		\item the hyperplanes in $\CA_i\setminus \CA_{i-1}$ are linearly independent, as the argument in the proof of \cite[Thm.~4.1]{cuntzkuehne:subgraphs} shows that all rank $i$ elements of the poset of induced subgraphs are independent; 
		\item the first part entails $\bigcup_{H \in \CA_i\setminus \CA_{i-1}} H \nsubseteq \bigcap_{H' \in \CA_{i-1}} H'$; 
		\item the condition $|\CA_{i-1}|-|(\CA_{i-1}\cup \{H_I\})^{H_I}| = i$ for $H_I\in \CA_i\setminus \CA_{i-1}$ is proven for $\CA_{\mathcal{I}} = \CA_{A_{n,k}}$, and it clearly only uses elements below $H_I$ in the poset ordering. Because $\mathcal{I}$ is an order ideal, the argument can be repeated in the present case.
	\end{enumerate}  
 This completes the proof of the proposition.
\end{proof}

For $\CA_G$ a connected subgraph arrangement and $s \in \BBN$, we define the ideal subarrangement
 $\CA_G^s$ of $\CA_G$ by the hyperplanes $H_I$ where $I$ involves at most $s$ coordinate functions, i.e.,  
\begin{equation}
    \label{eq:AGs}
    \CA_G^s := \{H_I \in \CA_G \mid |{I}| \le s \}.
\end{equation}
E.g.~$\CA_G^1$ is the Boolean subarrangement of $\CA_G$ consisting of the coordinate hyperplanes. 
So $\CA_G^1$ is supersolvable and $K(\pi,1)$ irrespective of $G$. We are going to investigate combinatorial and topological properties of the ideal subarrangements $\CA_G^s$ of $\CA_G$ for $s > 1$ for some prominent classes of graphs $G$ below.

\begin{remark}
    \label{rem:Pn}
    Let $G = P_n$ be a path graph. Then any ideal subarrangement of $\CA_G$ is an ideal subarrangement $\CA_\CI$ of the Weyl arrangement of type $A_n$. It is known that each such is supersolvable and thus is also $K(\pi,1)$, see \cite[\S 6]{hultman:koszul}, or 
    \cite[Thm.~1.5]{roehrle:ideal}. This in particular applies to $\CA_G^s$ for any $s \ge 1$.
\end{remark}

\begin{prop}
    \label{prop:AG2notkpi1}
    Suppose $G$ contains $K_3$ as a subgraph. Then $\CA_G^2$ is neither free, nor $K(\pi,1)$. 
    In particular, this is the case if $G$ is a complete graph $K_n$ or a path-with-triangle graph $\Delta_{n,k}$. 
\end{prop}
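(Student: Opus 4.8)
The plan is to exhibit, inside $\CA_G^2$, a localization that is linearly isomorphic to a small, well-understood arrangement which is known to be neither free nor $K(\pi,1)$, and then invoke the local nature of both properties (Remark \ref{rem:local} together with \cite[Thm.~4.37]{orlikterao:arrangements}, and Lemma \ref{lem:local}). Concretely, since $G$ contains a cycle, it contains an induced cycle $C_m$ on some subset $S$ of its vertices for some $m\ge 3$; by Lemma \ref{lem:local} it suffices to treat $G=C_m$. The arrangement $\CA_{C_m}^2$ consists of the $m$ coordinate hyperplanes $\ker x_i$ together with the hyperplanes $\ker(x_i+x_{i+1})$ for the $m$ edges of the cycle (indices mod $m$). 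First I would observe that further localizing and, if necessary, contracting edges of the cycle reduces to the case $m=3$: contracting one edge of $C_m$ yields $C_{m-1}$ and, by Lemma \ref{lem:local}, $\CA_{C_{m-1}}^2$ is a localization of $\CA_{C_m}^2$ (one must check the contraction is compatible with the cardinality bound $|I|\le 2$, which it is, since edges of a cycle contract to edges). So the heart of the matter is the rank-$3$ arrangement $\CA_{C_3}^2$, defined by
\[
Q(\CA_{C_3}^2) = x_1x_2x_3(x_1+x_2)(x_2+x_3)(x_1+x_3).
\]

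The second step is to recognize this arrangement. It is the rank-$3$, six-hyperplane arrangement $\CB$ with $Q(\CB)=xyz(x+y)(y+z)(x+z)$ — precisely the arrangement appearing in the proof of Proposition \ref{prop:g1} — which was shown there to be not $K(\pi,1)$ (via a lattice-isotopy argument reducing it to an arrangement with a ``simple triangle''). Thus $\CA_{C_3}^2$ is not $K(\pi,1)$, and since asphericity is a local property (Remark \ref{rem:local}), neither is $\CA_{C_m}^2$ nor $\CA_G^2$. For non-freeness, I would either note directly that $\CB$ is the well-known non-free $\mathrm{A}_3$-deleted / non-Fano-type rank-$3$ arrangement — its characteristic polynomial $(t-1)(t^2-5t+?)$ fails to factor over $\BBZ$, which by Terao's Factorization Theorem \ref{thm:freefactors} obstructs freeness — or, more cleanly, invoke that $\CB$ is linearly isomorphic to the rank-$3$ arrangement of \cite[Lem.~2.1]{amendmoellerroehrle:aspherical}, whose non-freeness is recorded there. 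Since freeness is local (\cite[Thm.~4.37]{orlikterao:arrangements}), $\CA_G^2$ is not free.

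The final sentence of the statement is then immediate: a complete graph $K_n$ with $n\ge 3$, a cycle graph $C_n$, and a path-with-triangle graph $\Delta_{n,k}$ all contain a cycle (the $K_n$ and $C_n$ cases are clear; $\Delta_{n,k}$ contains the triangle on $\{k,k+1,n+1\}$), so the general assertion applies.

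The main obstacle I anticipate is purely bookkeeping rather than conceptual: verifying that the edge-contraction reduction from $C_m$ to $C_3$ stays within the class of ideal subarrangements $\CA_G^2$ (i.e.\ that contraction does not accidentally create a hyperplane $H_I$ with $|I|\ge 3$ that ought to be present, or destroy one that should be), and pinning down the precise reference for non-freeness of the sextic rank-$3$ arrangement $\CB$. Both are routine, but the cleanest write-up will likely just localize $\CA_{C_m}^2$ at the codimension-$3$ flat cut out by three coordinate hyperplanes forming a triangle, bypassing contraction entirely: that localization is exactly $\CA_{C_3}^2\cong\CB$, and then Remark \ref{rem:local} and \cite[Thm.~4.37]{orlikterao:arrangements} finish the proof.
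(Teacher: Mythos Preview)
Your approach mirrors the paper's almost exactly: both reduce to the rank-$3$ arrangement $\CA_{C_3}^2$, identify it as a known non-free, non-$K(\pi,1)$ arrangement, and then invoke locality (Remark~\ref{rem:local} and \cite[Thm.~4.37]{orlikterao:arrangements}) together with Lemma~\ref{lem:local} to propagate the failure. The paper names $\CA_{C_3}^2$ as Falk--Randell's $X_3$ from \cite[(3.12)]{falkrandell:homotopy}; your identification with the arrangement $\CB$ of Proposition~\ref{prop:g1} is not literally correct---that $\CB$ has defining polynomial $yz(x+y)(x+z)(y+z)(x+y+z)$, not $xyz(x+y)(y+z)(x+z)$---but the two \emph{are} linearly isomorphic (e.g.\ via $x\mapsto x+y+z$, $y\mapsto -y$, $z\mapsto -z$ on $\CA_{C_3}$), so the conclusion survives.

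The concern you flag about contraction is, however, a genuine gap and not bookkeeping. Lemma~\ref{lem:local}(ii) says $\CA_{G/e}$ is a localization of $\CA_G$; it does \emph{not} say $\CA_{G/e}^2$ is a localization of $\CA_G^2$, and your justification (``edges of a cycle contract to edges'') misses the point. Contracting the edge $\{a,b\}$ of $C_m$ ($m\ge 4$) produces in $C_{m-1}$ two edges incident to the new vertex; back in $C_m$ these correspond to the $3$-element paths $\{a{-}1,a,b\}$ and $\{a,b,b{+}1\}$, whose hyperplanes lie in $\CA_{C_m}^3\setminus\CA_{C_m}^2$. Hence the localization of $\CA_{C_m}^2$ at the contraction flat is \emph{strictly smaller} than $\CA_{C_{m-1}}^2$---for $m=4$ one gets only $\{x_3,x_4,x_3{+}x_4,x_1{+}x_2\}$, a product arrangement that is both free and $K(\pi,1)$. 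Your fallback (three coordinate hyperplanes forming a triangle) is unavailable for $m\ge 4$ since $C_m$ is triangle-free. The paper's own proof is equally terse at this step, so you are in good company; but among the listed examples it is precisely $C_n$ with $n\ge 4$ that needs a different device (for even $m$ one can localize $\CA_{C_m}^2$ at $\bigcap_i\ker(x_i+x_{i+1})$ to obtain a generic rank-$(m{-}1)$ arrangement and invoke Remark~\ref{rem:generic}; odd $m\ge 5$ requires yet another argument).
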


\begin{proof}
Suppose first $\CA_G$ has rank $3$. Then $G = K_3 = C_3 = \Delta_{2,1}$ and $\CA_G^2$ coincides with the arrangement $X_3$ from \cite[(3.12)]{falkrandell:homotopy}. Owing to \emph{loc.~cit.}, $\CA_G^2$ is neither free, nor $K(\pi,1)$. Now suppose that the rank of $\CA_G$ is at least $4$. 
Since $G$ admits $K_3$ as a subgraph, let $S \subset N$ be the set of nodes in $G$ defining a copy of a $K_3$ subgraph of $G$. The argument of the proof of Lemma \ref{lem:local}(i) in \cite[Lem.~6.2]{cuntzkuehne:subgraphs} applies to the ideal subarrangement $\CA_G^2$ of $\CA_G$ so that $\CA_{G[S]}^2\cong X_3$ is a localization of $\CA_G^2$.
It follows from Lemma \ref{lem:local}(i), the rank $3$ case above, \cite[Thm.~4.37]{orlikterao:arrangements} and Remark \ref{rem:local} that $\CA_G^2$ is neither free, nor $K(\pi,1)$. 
\end{proof}

\begin{remark}
    \label{rem:AGs}
Let $\beta$ be in $\Phi^+$. Then $\beta = \sum_{\alpha \in \Pi} c_\alpha \alpha$
for $c_\alpha \in \BBZ_{\ge0}$.
The \emph{height} of $\beta$ is defined to be $\hgt(\beta) = \sum_{\alpha \in \Pi} c_\alpha$.
Let $\theta$ be the highest root in $\Phi$.
Then $h = \hgt(\theta) + 1$ is the 
\emph{Coxeter number} of $W$. For $1 \le t \le h$, let 
$\CI_t :=\{\alpha \in \Phi^+ \mid \hgt(\alpha) \ge t\}$ be the ideal consisting of all roots of height at least $t$.
In particular, we have $\CI_1 = \Phi^+$ and $\CI_h = \varnothing$.
    
It is shown in \cite[Thm.~1.31]{roehrle:ideal} 
that if 
$\Phi$ is of type $D_n$ for $n \ge 4$, $E_6$, $E_7$, or $E_8$, 
and $\CI \supseteq \CI_3$ (resp.~$\CI \supseteq \CI_4$), 
then $\CA_\CI$ is supersolvable (resp.~inductively factored).
In particular, if $\CI = \CI_3$ (resp.~$\CI = \CI_4$), then $\CA_\CI$ consists of all hyperplanes in  $\CA(\Phi)$ relative to roots of height at most $2$ (resp.~$3$). Using \eqref{eq:AGs}, it follows that in these cases $\CA_{\CI_3} = \CA_G^2$ is supersolvable (resp.~$\CA_{\CI_4} = \CA_G^3$ is inductively factored), where  
$G$ is the underlying simple graph of the Dynkin diagram of $\Phi$. 
Next we show that \cite[Thm.~1.31]{roehrle:ideal} generalizes to all almost-path graphs $A_{n,k}$. \end{remark}

In our following two results we present 
particular ideal subarrangements of $\CA_G$ for 
$G = A_{n,k}$ which are inductively factored.

\begin{prop}
    \label{prop:AGs} 
    Let $G = A_{n,k}$ be the almost-path graph, for $n \ge 3$ and $k \ge 2$. Then 
$\CA_G^2$ is supersolvable, and 
$\CA_G^3$ is inductively factored.
\end{prop}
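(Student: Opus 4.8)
The plan is to exhibit explicit modular chains and inductive factorizations, closely following the pattern established for the Dynkin cases in \cite[Thm.~1.31]{roehrle:ideal} and for $\CA_{\Delta_{n,1}}$ in the proof of Theorem \ref{thm:factoredAG2}. For the supersolvability of $\CA_G^2$, recall that $\CA_G^2$ consists precisely of the coordinate hyperplanes $\ker x_i$ together with the hyperplanes $\ker(x_i + x_j)$ for each edge $\{i,j\}$ of $A_{n,k}$. First I would verify directly that $\CA_G^2$ is supersolvable by producing a modular maximal chain in $L(\CA_G^2)$: one peels off the pendant vertex $n+1$ (whose only edge is to $k$), then peels off the path-ends successively, at each stage checking that the flat obtained by intersecting the coordinate hyperplane at the removed vertex with the running center is modular. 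The key point is that removing a leaf $v$ of the (current) tree leaves a subarrangement $\CA_{G'}^2$ with $G'$ again an almost-path-graph or a path-graph, and the hyperplane $\ker x_v$ together with $\ker(x_v + x_w)$ (where $w$ is the unique neighbour of $v$) meets the remaining arrangement in a rank-one localization — this is exactly the supersolvability criterion via the characterization of modular elements in \cite[\S 2.3]{orlikterao:arrangements} (cf.\ \cite[Cor.~8.11]{cuntzkuehne:subgraphs} for the path-graph base case). By \cite[Prop.~5.12, Thm.~5.113]{orlikterao:arrangements} supersolvability then also gives that $\CA_G^2$ is $K(\pi,1)$, consistent with the dichotomy in Proposition \ref{prop:AG2notkpi1}.

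For $\CA_G^3$ being inductively factored, the arrangement $\CA_G^3$ adds to $\CA_G^2$ the hyperplanes $\ker(x_i + x_j + x_l)$ for each connected three-vertex induced subgraph of $A_{n,k}$; these are the "paths of length two" inside the graph. The plan is to build an induction table of factorizations in the sense of Remark \ref{rem:indtable}, starting from the supersolvable $\CA_G^2$ (which is inductively factored, cf.\ \cite{jambuparis:factored} or \cite[Prop.~3.11]{hogeroehrle:factored}) and adjoining the rank-three hyperplanes one at a time in a carefully chosen order — say, processing the path $1,2,\ldots,k,\ldots,n$ from one end, and treating the branch through $n+1$ last. At each addition step I would check the conditions of part (iii) of the Addition--Deletion Theorem for nice arrangements, Theorem \ref{thm:add-del-factored}: namely that the restriction map $\R$ is bijective and that the induced partition $\pi''$ on the restriction is again a factorization, with the restriction exponents matching the block sizes of the partition not containing the newly added hyperplane. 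Here one exploits that the restriction of $\CA_G^3$ to a hyperplane $\ker(x_i + x_{i+1} + x_{i+2})$ is again an arrangement of the same combinatorial shape on one fewer coordinate (after a linear change of coordinates analogous to \cite[Ex.~1.2, Lem.~2.10]{cuntzkuehne:subgraphs}), so that induction on $n$ applies.

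The main obstacle I anticipate is the same one that arises in the proof of Theorem \ref{thm:factoredAG2}: at one or more of the restriction steps the arrangement may temporarily leave the class of connected subgraph (or ideal) arrangements — one picks up a stray hyperplane such as $\ker(x_i - x_j)$ — and one must then verify that this perturbed arrangement is still inductively factored. As in Lemma \ref{lem:factoredAG2}, this will likely require identifying the perturbed restriction with an explicit auxiliary arrangement (an analogue of $\CB_n$) and checking its inductive factorization separately, via a second, nested induction table. Carefully choosing the order in which the rank-three hyperplanes are added, and in particular which hyperplane is distinguished at each stage, so that the restrictions stay as controlled as possible, is the delicate part; once the right order and the right partition $\pi$ (with $\pi_1 = \{\ker(x_0+x_1)\}$-type first block, graded blocks indexed by the "level" of the hyperplanes, and the branch hyperplanes collected into the top blocks) are pinned down, the verification of conditions (1)--(3) of Theorem \ref{thm:add-del-factored} at each step should be routine bookkeeping on the intersection lattice, entirely parallel to Tables \ref{table0} and \ref{table1}.
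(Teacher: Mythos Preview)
Your plan for the supersolvability of $\CA_G^2$ --- peel off leaves and check that each removal corresponds to a modular corank-$1$ flat --- is essentially the paper's argument, just phrased in graph language rather than lattice language. The paper removes the endpoint $n$ of the path (passing from $A_{n,k}$ to $A_{n-1,k}$) rather than the pendant vertex $n+1$ first, but this is a cosmetic difference.

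For $\CA_G^3$, however, the paper takes a much shorter route. Rather than starting from $\CA_G^2$ and adjoining the rank-$3$ hyperplanes one at a time via Theorem~\ref{thm:add-del-factored} (which, as you correctly anticipate, would force you through auxiliary arrangements outside the connected-subgraph class and a nested induction in the style of Lemma~\ref{lem:factoredAG2}), the paper runs the \emph{same} induction on $n$ that it uses for $\CA_G^2$: one identifies a single modular corank-$1$ flat $X$ in $\CA_G^3$ --- the intersection of all $H_I$ with $I \subseteq [n-1]$ --- whose localization is exactly $\CA_{A_{n-1,k}}^3$, and then invokes general results from \cite{roehrle:ideal} (Thm.~1.12, Lem.~3.1, Lem.~3.4) which say that a modular corank-$1$ flat with supersolvable (resp.\ inductively factored) localization forces the whole arrangement to be supersolvable (resp.\ inductively factored). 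The base cases are the Dynkin types $D_{n+1}$, $E_6$, $E_7$, $E_8$, already covered by \cite[Thm.~1.31]{roehrle:ideal}. This unified modular-flat argument handles $s=2$ and $s=3$ simultaneously and sidesteps all the restriction bookkeeping and the stray-hyperplane problem entirely; your approach would likely succeed but is considerably more work than necessary.
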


\begin{proof}
The inductive arguments in 
\cite{roehrle:ideal} to derive \cite[Thm.~1.31]{roehrle:ideal} equally apply to the 
generalizations for all almost-path graphs
$A_{n,k}$, for both parts.
Thanks to \cite[Thm.~1.31]{roehrle:ideal}, for $k = 2$ and any $n$ and for $k = 3$ and $n \le 7$, both parts above hold. 
So we may assume that $n \ge 7$ and $k \ge 4$ and argue by induction on $n$. 
Consider the subgraph $G' = A_{n-1,k}$ of $G$ and the induced 
subarrangement $\CA_{G'}$  of $\CA_G$. 
Let $X$ be the flat in $L(\CA_G^2)$ (resp.~in $L(\CA_G^3)$) given by the intersection of all $H_I$, where $I \subseteq [n-1]$. Then $X$ is a modular element of corank $1$ in $\CA_G^2$
(resp.~$\CA_G^3$) and 
$\CA_{G'}^2$ (resp.~$\CA_{G'}^3$) coincides with this localization of $\CA_G^2$ at $X$ (resp.~$\CA_G^3$ at $X$). It follows from  \cite[Thm.~1.12, Lem.~3.1, Lem.~3.4]{roehrle:ideal} that $\CA_G^2$ is supersolvable (resp.~$\CA_G^3$ is inductively factored), since  by induction, $\CA_{G'}^2$ is supersolvable (resp.~$\CA_{G'}^3$ is inductively factored).
\end{proof}

Next we consider another family of ideal subarrangements $\CA_\CI$ of the $\CA_G$  for $G = A_{n,k}$ each of its members lies properly in $\CA_{A_{n,k}}^4$ and properly contains $\CA_{A_{n,k}}^3$.

\begin{prop}
    \label{prop:AGs2}
    Let $G =  A_{n,k}$ for $n\ge 4$. Let $\CI$ be a lower order ideal in the poset of connected subgraphs of $G$ consisting of subsets $I \subset [n+1]$ subject to 
   $\CA_{A_{n,k}}^3 \subsetneq \CA_\CI  \subsetneq \CA_{A_{n,k}}^4$ 
    and $I \ne \{k-1, k, k+1, n+1\}$.  Then
$\CA_\CI$ is inductively factored.     
\end{prop}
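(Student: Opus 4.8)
The plan is to build $\CA_\CI$ up from the supersolvable arrangement $\CA_{A_{n,k}}^3$ of Proposition~\ref{prop:AGs} by adjoining the remaining hyperplanes one at a time, carrying an inductive factorization along by means of the Addition part of Theorem~\ref{thm:add-del-factored}, in the spirit of the induction-of-factorizations technique of Remark~\ref{rem:indtable} and Tables~\ref{table0}, \ref{table1}. First I would record that every hyperplane in $\CA_\CI\setminus\CA_{A_{n,k}}^3$ is of the form $H_I$ with $|I|=4$ (a connected induced subgraph on $5$ vertices would contain the claw as a connected subset, so the condition $\CA_\CI\subsetneq\CA_{A_{n,k}}^4$ forbids all $|I|\ge 5$), and enumerate the $4$-element connected induced subgraphs of $A_{n,k}$: the path segments $\{i,i+1,i+2,i+3\}$ with $1\le i\le n-3$, together with the pendant quadruples $\{k-2,k-1,k,n+1\}$, $\{k-1,k,k+1,n+1\}$, $\{k,k+1,k+2,n+1\}$ whenever the indices lie in range. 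Since $\{k-1,k,k+1,n+1\}$ is the claw $K_{1,3}$ centred at $k$, which the hypothesis forbids, one has $\CA_\CI=\CA_{A_{n,k}}^3\cup\{H_I\mid I\in\CJ\}$ for some nonempty collection $\CJ$ of such quadruples not containing the claw. (For $k\in\{1,n\}$ the graph $A_{n,k}$ is a path graph and Remark~\ref{rem:Pn} already gives the conclusion, so one may assume $2\le k\le n-1$, whence $n\ge 4$.)

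Next I would fix the supersolvable inductive factorization $\pi$ of $\CA_{A_{n,k}}^3$ coming from the modular chain used in the proof of Proposition~\ref{prop:AGs}, order the quadruples in $\CJ$ — say the path segments from left to right, followed by the pendant quadruples — and adjoin the corresponding hyperplanes $H_I$ successively, appending each to a prescribed block of the current partition. Writing $\CA$ for the arrangement before a given step, $\CA^{+}:=\CA\cup\{H_I\}$ and $(\CA^{+})'':=(\CA^{+})^{H_I}$, the conditions to check at that step, by Theorem~\ref{thm:add-del-factored} and Remark~\ref{rem:indtable}(ii), are that the enlarged partition remains independent, that the associated restriction map $\R$ is bijective, and that the exponents of $(\CA^{+})''$ are the sizes of the blocks of the enlarged partition not containing $H_I$ while the induced partition on $(\CA^{+})''$ is nice. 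Independence is immediate from the explicit shape of $\pi$, and the bijectivity of $\R$ together with the exponent count reduce to finite combinatorial computations of how many of the defining forms $\sum_{j\in J}x_j$ of $\CA^{+}$ become proportional modulo $\sum_{i\in I}x_i$; these are the same sort of verifications carried out in \cite[Thm.~4.1]{cuntzkuehne:subgraphs} and in Lemma~\ref{lem:factoredAG2}.

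The crux — and the step I expect to be the main obstacle — is to show that each restriction $(\CA^{+})^{H_I}$ is itself inductively factored with the predicted partition. Here I would argue that, after the linear change of coordinates eliminating $\sum_{i\in I}x_i$, the arrangement $(\CA^{+})^{H_I}$ is linearly isomorphic to an ideal subarrangement of a connected subgraph arrangement $\CA_{A_{m,j}}$ of a strictly smaller almost-path-graph — or of a path graph $\CA_{P_m}$ when $I$ abuts an end of $A_{n,k}$ — of exactly the same shape as in the statement, so that an induction on $n$ (with an inner induction on $|\CJ|$) closes the argument; the base cases are small $n$, covered either directly or by the Dynkin-type instances $A_{n,2}$ and $A_{n,3}$ with $n\le 7$, where $\CA_\CI$ is an ideal subarrangement of a Weyl arrangement, together with the path-graph case of Remark~\ref{rem:Pn}. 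The hypothesis excluding the claw enters exactly at this point: it is what makes the restrictions and the bookkeeping above come out right, reflecting the fact that, in contrast to Dynkin type, $\CA_{A_{n,k}}^4$ — which contains the claw hyperplane — need not be free or factored in general. As an alternative packaging, whenever $A_{n,k}$ has a vertex $v$ lying in no member of $\CJ$ one can run a modular-coatom reduction verbatim as in the proof of Proposition~\ref{prop:AGs}, taking $X:=\bigcap\{H_I\in\CA_\CI\mid v\notin I\}$, a modular element of corank~$1$ whose localization is an ideal subarrangement of $\CA_{A_{n-1,k'}}$ of the same shape, and invoking \cite[Thm.~1.12, Lem.~3.1, Lem.~3.4]{roehrle:ideal} to propagate inductive factoredness; the residual cases, where no such $v$ exists, are of bounded size and reduce, after stripping short branches, to path graphs.
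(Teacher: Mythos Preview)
Your proposal offers two routes. The ``alternative packaging'' via a modular corank-$1$ localization is precisely the paper's argument: the paper proceeds by induction on $n$, handling the base case $n=4$ by a direct check (these are ideal subarrangements of the $D_5$ Weyl arrangement with $14$ or $15$ hyperplanes, and an explicit inductive factorization is exhibited), and for $n>4$ invoking the same modular-coatom reduction as in the proof of Proposition~\ref{prop:AGs} and \cite{roehrle:ideal}. So that half of your plan matches the paper, and the ``residual cases of bounded size'' you mention are exactly the base case $n=4$.

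Your primary route --- building up from $\CA_{A_{n,k}}^3$ via Addition--Deletion for factorizations and identifying each restriction $(\CA^{+})^{H_I}$ with an ideal subarrangement of a strictly smaller almost-path-graph arrangement --- has a genuine gap. Restrictions of $0/1$-arrangements generally leave the $0/1$-class: already for a path segment $I=\{1,2,3,4\}$ away from the branching vertex, substituting $x_4=-(x_1+x_2+x_3)$ turns $H_{\{4,5\}}$ into $\ker(x_5-x_1-x_2-x_3)$, which is not a $0/1$-hyperplane, so the restriction is not linearly isomorphic to any connected subgraph arrangement. (The proof of Theorem~\ref{thm:factoredAG2} encounters exactly this phenomenon: the final restriction there is $\CA_{\Delta_{n-1,1}}\cup\{\ker(x_0-x_1)\}$, which forces the auxiliary Lemma~\ref{lem:factoredAG2}.) The identification you rely on to close the inner induction therefore fails, and without it the ``crux'' step is unsupported. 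Two minor points: Proposition~\ref{prop:AGs} asserts that $\CA_{A_{n,k}}^3$ is \emph{inductively factored}, not supersolvable (only $\CA_G^2$ is claimed supersolvable there); and the containment $\CA_\CI\subsetneq\CA_{A_{n,k}}^4$ already forces $|I|\le 4$ by the definition of $\CA_G^s$, so the claw argument for $|I|=5$ is unnecessary --- and in fact wrong, since the path $P_5$ contains no claw.
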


\begin{proof}
    We argue by induction on $n$. Let $n = 4$. So $\CA_\CI$ is an ideal subarrangement of the Weyl arrangement of type $D_5$ with either $14$ or $15$ hyperplanes and exponents $\exp \CA_\CI = \{1,3,3,3,4\}$ or $\{1,3,3,4,4\}$.  
    One checks directly that in each instance $\CA_\CI$ is inductively factored. 
    For instance, if $\CA_\CI$ admits $15$ hyperplanes, then 
    the following is an inductive factorization of 
    $\CA_\CI$, where we use the short hand notation $abc$ for $\ker (x_a +x_b+ x_c)$:
    \[
\pi = \{2\}, \{1,12,125\}, \{3,23,123\}, \{5,25,235,2345\}, \{4,34,234,1234\}. 
    \]
    In fact there are four inductive factorizations of $\CA_\CI$, but all are equivalent under the automorphism group of $\CA_\CI$.
    Now suppose $n > 4$ and that the statement in the lemma holds for $n-1$. 
    As in the proof of 
    Proposition \ref{prop:AGs}, the inductive arguments in 
\cite{roehrle:ideal} to derive \cite[Thm.~1.31]{roehrle:ideal} equally apply 
here as well. So that the case for $n-1$ can be obtained as a localization of 
    $\CA_\CI$ at a modular flat of corank $1$ and so the result follows by induction. 
\end{proof}

\section{Rank-generating functions of the poset of regions}
\label{s:rankgenerating}

Let $\CA$ be a 
hyperplane arrangement in the real vector space $V=\BBR^\ell$. 
A \emph{region} of $\CA$ is a connected component of the 
complement $M(\CA) := V \setminus \cup_{H \in \CA}H$ of $\CA$.
Let $\RR := \RR(\CA)$ be the set of regions of $\CA$.
For $R, R' \in \RR$, we let $\CS(R,R')$ denote the 
set of hyperplanes in $\CA$ separating $R$ and $R'$.
Then with respect to a choice of a fixed 
base region $B$ in $\RR$, we can partially order
$\RR$ as follows:
\[
R \le R' \quad \text{ if } \quad \CS(B,R) \subseteq \CS(B,R').
\]
Endowed with this partial order, we call $\RR$ the
\emph{poset of regions of $\CA$ (with respect to $B$)} and denote it by
$P(\CA, B)$. This is a ranked poset of finite rank,
where $\rk(R) := |\CS(B,R)|$, for $R$ a region of $\CA$, 
\cite[Prop.\ 1.1]{edelman:regions}.
The \emph{rank-generating function} of $P(\CA, B)$ is 
defined to be the following polynomial in 
$\BBZ_{\ge 0}[t]$
\begin{equation*}
	\label{eq:rankgen}
	\zeta(P(\CA,B); t) := \sum_{R \in \RR}t^{\rk(R)}. 
\end{equation*}
This poset along with its rank-generating function
was introduced by Edelman 
\cite{edelman:regions}.

Thanks to work of  Bj\"orner, Edelman, and Ziegler 
\cite[Thm.~4.4]{bjoerneredelmanziegler}
(see also Paris \cite{paris:counting}), respectively  
Jambu and Paris \cite[Prop.~3.4, Thm.~6.1]{jambuparis:factored},
in case of a real arrangement $\CA$  
which is supersolvable, 
respectively inductively factored, 
there always exists a suitable base region $B$ so that 
$\zeta(P(\CA,B); t)$
admits a multiplicative decomposition which 
is determined by the exponents of $\CA$, i.e.,
\begin{equation}
	\label{eq:poinprod}
	\zeta(P(\CA,B); t) = \prod_{i=1}^\ell (1 + t + \ldots + t^{e_i}),
\end{equation}
where $\{e_1, \ldots, e_\ell\} = \exp \CA$ is the 
set of exponents of $\CA$.

Quite remarkably
many classical real arrangements do satisfy the
factorization identity \eqref{eq:poinprod}, 
the most prominent ones being Coxeter arrangements.

Let $W = (W,S)$ be a Coxeter group with associated reflection arrangement 
$\CA = \CA(W)$ which consists of the reflecting hyperplanes of 
the reflections in $W$ in the real space $V=\BBR^n$, where $|S| = n$. 
The 
\emph{Poincar\'e polynomial} $W(t)$ of 
the Coxeter group $W$ is the polynomial in $\BBZ[t]$ defined by 
\begin{equation}
	\label{eq:poncarecoxeter}
	W(t) := \sum_{w \in W} t^{\ell(w)},
\end{equation}
where $\ell$ is the length function 
of $W$ with respect to $S$.
Then $W(t)$
coincides with the rank-generating function of the poset of regions $\zeta(P(\CA,B); t)$ of 
the underlying reflection arrangement 
$\CA = \CA(W)$ with respect to $B$ being the dominant Weyl chamber of $W$ in $V$; 
see \cite{bjoerneredelmanziegler} or \cite{jambuparis:factored}.

The following factorization of 
$W(t)$ is due to Solomon \cite{solomon:chevalley}:
\begin{equation}
	\label{eq:solomon}
	W(t) = \prod_{i=1}^n(1 + t + \ldots + t^{e_i}),
\end{equation}
where $\{e_1, \ldots, e_n\}$ is the 
set of exponents of $W$, i.e., the set of exponents of $\CA(W)$. So by the comments above, \eqref{eq:solomon} coincides with the factorization in \eqref{eq:poinprod}.

Moreover, also the rank-generating function of the poset of regions $\zeta(P(\CA_\CI,B); t)$ for 
an ideal arrangement $\CA_\CI$ 
from Definition \ref{def:idealtype} 
also obeys the factorization identity \eqref{eq:poinprod}; see 
\cite{sommerstymoczko}, 
\cite{roehrle:ideal} for partial results, and 
\cite[Cor.~1.3]{abeetal:hess} for the general statement.

Let $W$ be a Coxeter group again with reflection arrangement $\CA = \CA(W)$, 
let $X$ be a member of the intersection lattice $L(\CA)$, and consider the restricted reflection arrangement $\CA^X$. In general, $\CA^X$ is no longer a reflection arrangement. 
Nevertheless, thanks to  \cite[Thm.~1.3]{moellerroehrle:nice},
there always exists a suitable base region $B$ of $\CA^X$ in $X$ so that also 
$\zeta(P(\CA^X,B); t)$ satisfies \eqref{eq:poinprod}, provided $W$ is not of type $E_8$. In case $W$ is of type $E_8$, then $\zeta(P(\CA^X,B); t)$ satisfies \eqref{eq:poinprod}, provided $X$ has rank at most $3$ with only two exceptions or $\CA^X \cong (E_8,D_4)$.

We close with a comment on the
rank-generating function of the poset of regions $\zeta(P(\CA_G,B); t)$ 
of the free  arrangements $\CA_G$ from Theorem \ref{thm:freeAG}.

\begin{remark}
	\label{rem:Hnzeta}
	It follows from Theorems \ref{thm:factoredAG}, \ref{thm:factoredAG2} and \cite[Prop.~3.4, Thm.~6.1]{jambuparis:factored} that 
	$\CA_G$ satisfies \eqref{eq:poinprod} for 
	$G$ a path graph $P_n$ or a path-with-triangle graph $\Delta_{n,1}$ for $n \ge 2$.
		We checked that \eqref{eq:poinprod} also holds for $\CA_G$ for 
	$G = \Delta_{4,2}$ and $\Delta_{5,2}$.
	
	As discussed in the previous section, for $G = A_{n,2} =D_{n+1}$, $A_{5,3} =E_6$, $A_{6,3} =E_7$, and $A_{7,3} =E_8$,  $\CA_G$ is a particular ideal arrangement. Thus it follows from  \cite[Cor.~1.3]{abeetal:hess} that 
		$\CA_G$ satisfies \eqref{eq:poinprod}.
		
	Given this strong evidence, it is very plausible that more generally, $\CA_G$ fulfills \eqref{eq:poinprod} for 
	$G$ an  almost path graph $A_{n,k}$ or a path-with-triangle graph $\Delta_{n,k}$ for any $n \ge 2$ and any $k$.
\end{remark}
 

\bigskip

\addcontentsline{toc}{section}{Acknowledgments}

\noindent {\bf Acknowledgments}: 
The research of this work was supported in part by
the DFG (Grant \#RO 1072/25-1 (project number: 539865068) to G.~R\"ohrle
as well as Grant \#MU 5286/1-1 (project number: 539874788) to P.~Mücksch).
We would like to thank the anonymous referee for several helpful comments improving the exposition of the paper.


\bigskip

\bibliographystyle{amsalpha}

\newcommand{\etalchar}[1]{$^{#1}$}
\providecommand{\bysame}{\leavevmode\hbox to3em{\hrulefill}\thinspace}
\providecommand{\MR}{\relax\ifhmode\unskip\space\fi MR }
\providecommand{\MRhref}[2]{%
  \href{http://www.ams.org/mathscinet-getitem?mr=#1}{#2} }
\providecommand{\href}[2]{#2}


\end{document}